\providecommand\@dotsep{5}
\def\listtodoname{List of Todos}
\def\listoftodos{\@starttoc{tdo}\listtodoname}
\numberwithin{equation}{section}
\newcommand{\e}{\varepsilon}
\newcommand{\eps}{\varepsilon}
\newcommand{\R}{\mathbb{R}}
\newcommand{\RN}{{\mathbb{R}^N}}
\newcommand{\RD}{{\mathbb{R}^2}}
\newcommand{\de}{\partial}
\newcommand{\weakto}{\rightharpoonup}
\DeclareMathOperator{\meas}{meas}
\renewcommand{\le}{\leslant}
\renewcommand{\ge}{\geslant}
\renewcommand{\a }{\alpha }
\renewcommand{\b }{\beta }
\renewcommand{\d }{\delta }
\newcommand{\g }{\gamma }
\renewcommand{\l }{\lambda}
\newcommand{\n }{\nabla }
\newcommand{\s }{\sigma }
\newcommand{\G}{\Gamma}
\newcommand{\X}{\mathcal{X}}
\newcommand{\calh}{\mathcal{H}}
\newcommand{\ch}{\mathcal{H}^{2,p}}
\newcommand{\chr}{\mathcal{H}_r^{2,p}}
\newcommand{\cx}{\mathcal{X}^{2,p}}
\newcommand{\cxr}{\mathcal{X}_r^{2,p}}
\renewcommand{\H}{H^1(\RD)}
\newcommand{\Hr}{H^1_r(\RD)}
\newcommand{\Ha}{\mathcal H}
\newcommand{\ird }{\int_{\RD}}
\def\bbm[#1]{\mbox{\boldmath $#1$}}
\newcommand{\beq }{\begin{equation}}
\newcommand{\eeq }{\end{equation}}
\renewcommand{\le}{\leqslant}
\renewcommand{\ge}{\geqslant}
\newcommand{\dis}{\displaystyle}
\def\br#1\er{\textcolor{red}{#1}} 
\def\bl#1\el{\textcolor{blue}{#1}}
\newtheorem{theorem}{Theorem}[section]
\newtheorem{lemma}[theorem]{Lemma}
\newtheorem{definition}[theorem]{Definition}
\newtheorem{proposition}[theorem]{Proposition}
\newtheorem{remark}[theorem]{Remark}
\newtheorem{counterexample}[theorem]{Counterexample}
\title[Planar Schr\"odinger equation with competing logarithmic self-interaction]{Schr\"odinger equation in dimension two \\ with  competing logarithmic self-interaction
	%Planar scalar field equation \\ with competing logarithmic nonlocal terms
}
\author[A. Azzollini]{Antonio Azzollini}
\address{A. Azzollini \newline\indent
Dipartimento di Matematica, Informatica ed Economia, \newline\indent Universit\`a degli
	Studi della Basilicata,
	\newline\indent
	Via dell'Ateneo Lucano 10, 85100
	Potenza, Italy}
\email{antonio.azzollini@unibas.it}
\author[P. d'Avenia]{Pietro d'Avenia}
\address{P. d'Avenia
\newline\indent Dipartimento di Meccanica, Matematica e Management,\newline\indent
	Politecnico di Bari
	\newline\indent
	Via Orabona 4,  70125  Bari, Italy}
\email{pietro.davenia@poliba.it}
\author[A. Pomponio]{Alessio Pomponio}
\address{A. Pomponio
\newline\indent Dipartimento di Meccanica, Matematica e Management,\newline \indent
	Politecnico di Bari
	\newline\indent
	Via Orabona 4,  70125  Bari, Italy}
\email{alessio.pomponio@poliba.it}
\subjclass[2010]{35J20, 35J60, 46E35}
\keywords{logarithmic convolution kernels; competing nonlocal terms; Schr\"odinger-Poisson or Choquard type system; weighted Sobolev spaces}
\begin{document}
	\maketitle
	\begin{abstract}
	In this paper we study the equation
			\[
	-\Delta u  +(\log |\cdot|*|u|^2)u=(\log|\cdot|*|u|^q)|u|^{q-2}u, \qquad \hbox{ in }\RD,
		\]
		where $8/3 < q < 4$. By means of variational arguments, we find infinitely many radially symmetric classical solutions. The main difficulties rely on the competition between the two nonlocal terms and on the presence of  logarithmic kernels, which have not a prescribed sign. In addition, in order to find finite energy solutions, a suitable functional setting analysis is required.
	\end{abstract}

	\section{Introduction}
	
	In the last years a wide literature has been devoted to the study of nonlinear partial differential equations involving nonlocal terms, such as Choquard, Schr\"odinger-Newton, Hartree-Fock, Schr\"odinger-Poisson equation. Such problems can be obtained  also considering systems arising from electromagnetic theory, gravitational theory, and quantum chemistry (see, for example, \cite{Lieb,MPT,Pekar}).
	In particular, we refer to nonlocal terms containing the convolution of a kernel (usually the Poisson kernel or, more in general, the Riesz kernel) with a nonlinear function. For instance, in the classical Choquard equation \cite{Lions}, the kernel is $ 1/{|x|}$ and the nonlinearity is $F(u)=|u|^2$ and so the equation reads as
	\begin{equation}\label{cho}
	-\Delta u  + u= \left(\frac 1{|x|}*|u|^2\right)u \qquad \hbox{ in }\RN, \text{ with }N\ge 1. 
	\end{equation}

	In literature there are many extensions of \eqref{cho}. For example, Moroz and Van Schaftingen in \cite{MVJFA} generalize the power of the nonlinearity $F$ and consider an arbitrary Riesz kernel $I_\a (x)=1/|x|^{N-\a}$, with $\a \in (0,N)$, studying 
	\begin{equation*}%\label{choa}
	-\Delta u  + u= \left(\frac 1{|x|^{N-\a}}*|u|^p\right)|u|^{p-2}u \qquad \hbox{ in }\RN,\text{ with }N\ge 1.
	\end{equation*}

	Later on,  the same authors, in \cite{MVTrans} deal with a more general nonlinearity $F$, in the spirit of Berestycki-Lions \cite{BL1} and so the equation becomes  
	\begin{equation}\label{choa}
	-\Delta u  + u= \left(\frac 1{|x|^{N-\a}}*F(u)\right)f(u) \qquad \hbox{ in }\RN,\text{ with }N\ge 1
	\end{equation}
	where  $\a \in (0,N)$, $f:\R \to \R$ is a continuous function, and $\dis F(t)=\int_0^t f(s)ds$.

	Observe that, if $\alpha=0$ in \eqref{choa}, since $I_0\ast F(u)=F(u)$,  the growth conditions on $F$ in \cite{MVTrans} cover only partially the Berestycki-Lions assumptions, namely in the case of a nonnegative (attractive) nonlinearity.
	
	On the other hand, as for instance in the Hartree-Fock theory, the interaction potential could be also repulsive \cite{Benguria,LionsCMP}.
	Therefore \cite{dMP} analyses the following equation
	\begin{equation}\label{eqdmp}
	-\Delta u
	%+\lambda u
	= \big(I_\alpha\ast F(u)\big)f(u)-\big(I_\beta\ast G(u)\big)g(u)
	\qquad \hbox{ in }\R^N,
	\end{equation}
	where $N\ge 3$, $0\le \b\le \a<N$, 
	%$\lambda \geq 0$, 
	$g:\R \to \R$ is a continuous function and 
	$\dis G(t)=\int_0^tg(s) ds.$
	In the limiting case $\alpha=\beta=0$, the growth conditions on $F$ and $G$ fully cover the Berestycki-Lions assumptions in \cite{BL1}.

	We finally refer to  \cite{MVSurvey} and the references therein for  similar problems.
	
	Since, as we said before, the convolution kernel can be the Poisson one, in dimension $2$, it make sense to replace in \eqref{cho} $1/|x|$ with $\log|x|$.  Thus
	Cingolani-Weth in \cite{CW} consider 
	\begin{equation}\label{sist-cw}
	\begin{cases}
	-\Delta u + u +\g\phi u=0  & \text{in }\RD,
	\\
	\Delta \phi=u^2 & \text{in }\RD,
	\end{cases}
	\end{equation}
	with $\g >0$. 
	Up to harmonic functions, the solution $\phi$ of the second equation is the convolution of the fundamental solution of the Laplacian in two dimensions with $u^2$. So \eqref{sist-cw} reads as the single nonlocal equation
	\begin{equation}\label{eqcw}
	-\Delta u +u +\g(\log |\cdot|*|u|^2)u=0, \qquad \hbox{ in }\RD.
	\end{equation}
	At least formally,
	solutions of  \eqref{eqcw} can be found as critical points of the functional
	\begin{equation}
	E(u)=\ird (|\n u|^2+u^2)\, dx + \frac \g{2}\ird \ird \log (|x-y|)|u(x)|^2|u(y)|^2\, dx \, dy.
	\end{equation}
	With respect to the Riesz kernel cases there are  important differences and several difficulties have to be faced.
	First of all, while the Riesz kernel is strictly positive, $\log|x|$ changes sign. For this reason, in \cite{CW,Stu}, the nonlocal term
	\begin{equation*}
	V(u)=\ird \ird \log (|x-y|)|u(x)|^2|u(y)|^2\, dx \, dy
	\end{equation*}
	is written as $V=V_1-V_2$ where
	\begin{align*}
	V_{1}(u)&=\ird \ird \log (1+|x-y|)|u(x)|^2|u(y)|^2\, dx \, dy,\\
	V_{2}(u)&=\ird \ird \log \left(1+\frac 1{|x-y|}\right)|u(x)|^2|u(y)|^2\, dx \, dy.
	\end{align*}
	Observe $V_2$ is finite in $\H$, by Hardy-Littlewood-Sobolev inequality, but, to guarantee the finiteness of $V_1$, as in \cite{Stu}, one has to consider the smaller Hilbert space
	\[
	X=\left\{u\in \H: \ird \log(1+|x|)|u(x)|^2 \, dx<+\infty\right\}.
	\]
	So the functional $E$ is well defined in $X$.
	\\
	In \cite{CW,DW}, the authors assume  $\g >0$. In this case, the functional has good geometric properties, namely the Mountain Pass structure. Instead, whenever $\g <0$, the situation is completely different and more complicated. Indeed the term $V_1$ appears with a negative coefficient in the functional $E$ which turns out to be strongly indefinite, namely unbounded from above and below on an infinite-dimensional subspace of $X$ and this indefiniteness cannot be removed by a compact perturbation. 
	In order to control the term $V_1$, since it contains a logarithmic type potential, Azzollini in \cite{A} (see also \cite{APim}) introduces a potential with a polynomial growth near the linear term, studying
	\[
	-\Delta u +(1+|x|^{\a})u -(\log |\cdot|*|u|^2)u=0, \qquad \hbox{ in }\RD,
	\]
	with $\a >0$.
	
	In this paper our aim is to deal with a logarithmic kernel in dimension $2$ in presence of two nonlocal terms in competition  as in \eqref{eqdmp}, namely
	\begin{equation}\label{eq}\tag{$\mathcal{P}$}
	-\Delta u  +(\log |\cdot|*|u|^2)u=(\log|\cdot|*|u|^q)|u|^{q-2}u, \qquad \hbox{ in }\RD,
	\end{equation}
	where $q>2$.

	%\todo[inline]{Se esiste $u\in \X_r$ t.c. $V_{0,2}(u)<0$ e $V_{0,q}(u)<0$, la funzione $t\mapsto I(tu)$ potrebbe avere un punto critico, che corrisponderebbe a un punto sulla variet\`a di Nehari ad energia negativa. }
	
	Formally, solutions of \eqref{eq} can be found as critical points of the functional
	\begin{multline*}
	I(u): =\ird |\n u|^2\, dx + \frac 1{2}\ird \ird \log (|x-y|)|u(x)|^2|u(y)|^2\, dx \, dy\\
	-\frac 1{q}\ird \ird \log (|x-y|)|u(x)|^q|u(y)|^q\, dx \, dy,
	\end{multline*}
	and it is immediately clear that it carries somehow all the difficulties run into \eqref{eqcw} both in the case $\g >0$ and $\g <0$. To be more precise, introducing the following notations, for $k\ge2$,
	\begin{equation*}
	V_{0,k}(u):=\ird \ird \log (|x-y|)|u(x)|^k|u(y)|^k\, dx \, dy
	\end{equation*}
	and 
	\begin{align*}
	V_{1,k}(u)&:=\ird \ird \log (2+|x-y|)|u(x)|^k|u(y)|^k\, dx \, dy,\\
	V_{2,k}(u)&:=\ird \ird \log \left(1+\frac 2{|x-y|}\right)|u(x)|^k|u(y)|^k\, dx \, dy,
	\end{align*}
	then
	\begin{equation}
	\label{Vkrel}
	V_{0,k}=V_{1,k}-V_{2,k}
	\end{equation}
	and we can write
	\begin{align*}
	I(u)&= \ird |\n u|^2\, dx + \frac 1{2} V_{0,2}(u)
	-\frac 1{q}V_{0,q}(u)
	\\
	&=\ird |\n u|^2\, dx
		+\frac{1}{2}V_{1,2}(u)
		-\frac{1}{2}V_{2,2}(u)
		-\frac{1}{q}V_{1,q}(u)
		+\frac{1}{q}V_{2,q}(u).
	\end{align*}
	Also in this case, by
	Hardy-Littlewood-Sobolev inequality, $V_{2,k}$ is finite in $\H$, for $k\in\{2,q\}$. At contrary, as already explained, $V_{1,k}$ is not well defined in $\H$, for $k\in\{2,q\}$. To control $V_{1,2}$, similarly as in \cite{Stu}, we introduce the space $\X$ as the completion of $C_0^{\infty}(\RD)$ with respect to the norm $$\|\cdot\|_{\X}:=
	\sqrt{\|\n \cdot\|_2^2+\|\cdot\|_{*,2}^2}\ ,$$
	where, for any $k\ge 2$, 
	\begin{equation}\label{norma*k}
	\|u\|_{*,k}:=\left(\ird \log(2+|x|)|u(x)|^k \, dx\right)^{\frac 1k}. 
	\end{equation}

We say that a function $u$ has {\em finite energy} if it belongs to $\X$.
	
	At this point it is not clear at all if also $V_{1,q}$ is well defined in $\X$.

Observe that $\X$ coincides with the weighted Sobolev space $\Ha_V$ where the weight is $V(x)=\log(2+|x|)$, while 
	$V_{2,q}$ is well defined in weighted Lebesgue space $L_V^q(\RD)$ (see Section \ref{funct} for precise definitions).
 A wide literature has been devoted to the study of weighted Sobolev and Lebesgue spaces and their embedding properties (see, for example, \cite{OK} and the references therein). However, up to our knowledge, there is no result concerning the embedding properties of $\Ha_V$ into $L_V^q(\RD)$, namely of a weighted Sobolev space into weighted Lebesgue spaces with the {\em same coercive weight}.  We  will need to examine in depth this new functional setting to discover nice properties guaranteeing  a good definition of our functional $I$. In particular, we will provide a  Sobolev type continuous embedding theorem, but also a counterexample to show that such an embedding is not compact.
	
	However, the analysis of the geometry of the functional on one hand and its compactness properties on the other will shift the focus to the problem of finding a suitable estimate permitting to control the term $V_{1,q}$ not just by the $\mathcal X-$norm, but also by the $H^1-$norm. We will prove the existence of such an estimate making use of the well known Strauss Lemma in the set of radial functions of $\X$.
	
Our main result is
	
	\begin{theorem}\label{main}
		If  $8/3<q<4$, then there exist infinitely many  nontrivial radial classic solutions with finite energy to problem \eqref{eq}, whose corresponding critical values diverge.
	\end{theorem}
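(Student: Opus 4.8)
The plan is to obtain the solutions as critical points of the even functional $I$ restricted to the radial subspace $\X_r$ of $\X$, and then to extract infinitely many of them, with diverging critical levels, by a $\mathbb Z_2$-symmetric minimax scheme. The first task is to show that $I\in C^1(\X_r)$. By \eqref{Vkrel} it suffices to handle each $V_{1,k}$ and $V_{2,k}$ separately: the terms $V_{2,2}$ and $V_{2,q}$ are finite and smooth on $\H$ by the Hardy–Littlewood–Sobolev inequality, and $V_{1,2}$ is controlled by the $\X$-norm by construction. The delicate term is $V_{1,q}$. Using $\log(2+|x-y|)\le\log(2+|x|)+\log(2+|y|)$ one gets the product splitting $V_{1,q}(u)\le 2\,\|u\|_{*,q}^q\,\|u\|_q^q$, and combining the continuous embedding $\X_r\hookrightarrow L^q_V$ with the radial Strauss estimate yields the crucial bound $\|u\|_{*,q}^q\lesssim\|u\|_{H^1}^{q-2}\|u\|_{\X}^2$, so that $V_{1,q}(u)\lesssim\|u\|_{H^1}^{2q-2}\|u\|_{\X}^2$. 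This is precisely where working with radial functions is essential, and the range $8/3<q<4$ is what makes this estimate compatible with the Hardy–Littlewood–Sobolev exponent governing $V_{2,2}$.

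With a $C^1$ even functional in hand, I would invoke the symmetric Mountain Pass Theorem, which produces an unbounded sequence of critical values once $I(0)=0$, $I$ is even, the Palais–Smale condition holds, and two geometric conditions are satisfied. For the local geometry near the origin, discarding the nonnegative term $\tfrac1q V_{2,q}$ and using the coercivity of $\|\nabla u\|_2^2+\tfrac12 V_{1,2}(u)$ together with the bounds $V_{2,2}\lesssim\|u\|_{\X}^4$ and $V_{1,q}\lesssim\|u\|_{\X}^{2q}$, one gets $I(u)\ge c_1\|u\|_{\X}^2-c_2\|u\|_{\X}^4-c_3\|u\|_{\X}^{2q}$, hence $I\ge\alpha>0$ on a small sphere $\partial B_\rho$. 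For the global geometry, I would verify that $I\to-\infty$ on every finite-dimensional subspace: along $u=tv$ the leading term as $t\to+\infty$ is $-\tfrac{t^{2q}}{q}V_{0,q}(v)$ because $2q>4$, and the favorable sign is arranged by exploiting the scaling identity $V_{0,k}(v(\cdot/\lambda))=\lambda^4\big(\log\lambda\,(\int|v|^k)^2+V_{0,k}(v)\big)$, which renders $V_{0,q}$ positive after a dilation.

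The heart of the proof, and the main obstacle, is the Palais–Smale condition. Boundedness of a sequence $(u_n)$ with $I(u_n)\to c$ and $I'(u_n)\to0$ follows from the algebraic identity $I(u)-\tfrac{1}{2q}\langle I'(u),u\rangle=\big(1-\tfrac1q\big)\|\nabla u\|_2^2+\big(\tfrac12-\tfrac1q\big)V_{0,2}(u)$, in which the higher-order term $V_{0,q}$ cancels; since $q>2$ both coefficients are positive, and writing $V_{0,2}=V_{1,2}-V_{2,2}$ one absorbs $V_{2,2}\lesssim\|u\|_2^3\|\nabla u\|_2$ into the coercive contribution of $\|\nabla u\|_2^2+V_{1,2}$, obtaining a uniform bound on $\|u_n\|_{\X}$. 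The compactness step is the genuinely new difficulty, since $\X_r\hookrightarrow L^q_V$ is continuous but not compact, as the counterexample shows. The way around this is exactly the product estimate $V_{1,q}(u)\lesssim\|u\|_q^q\,\|u\|_{*,q}^q$: the factor $\|u\|_q^q$ is controlled through the $H^1$-norm, for which the radial embedding $\Hr\hookrightarrow L^q(\RD)$ is compact (here $q>2$ is used), so that $\|u_n\|_q\to\|u\|_q$, while $\|u\|_{*,q}^q$ stays merely bounded in $\X$. Passing to the limit in $I'(u_n)=o(1)$ along the weakly convergent subsequence then gives strong convergence in $\X_r$.

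Granting the Palais–Smale condition at every level, the symmetric Mountain Pass Theorem yields critical points $(u_k)\subset\X_r$ with $I(u_k)\to+\infty$; by evenness they are nontrivial and provide infinitely many radial finite-energy solutions of \eqref{eq}. To upgrade weak to classical solutions, I would use elliptic regularity: writing the convolution potentials $\log\ast|u|^2$ and $\log\ast|u|^q$ as solutions of Poisson equations with right-hand sides $|u|^2$ and $|u|^q$, and bootstrapping the resulting semilinear equation, one reaches $u\in C^2(\RD)$. I expect the compactness step — reconciling the non-compact weighted embedding with the competing nonlocal term through the radial estimate — to be the decisive and most technical part, and it is also where the upper restriction $q<4$ is needed to keep all the exponents in the admissible range.
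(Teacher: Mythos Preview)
Your overall strategy (symmetric minimax on $\X_r$, Palais' principle, regularity bootstrap) is the paper's strategy, and your treatment of $C^1$-regularity, the compactness step after boundedness, and the passage from weak to classical solutions is essentially correct. However, the core of the argument---boundedness of Palais--Smale sequences---has a genuine gap, and this is exactly the difficulty that drives the paper's more elaborate machinery.

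Your identity $I(u)-\tfrac{1}{2q}I'(u)[u]=(1-\tfrac1q)\|\nabla u\|_2^2+(\tfrac12-\tfrac1q)V_{0,2}(u)$ is correct, but the conclusion does not follow. Writing $V_{0,2}=V_{1,2}-V_{2,2}$, the term $V_{2,2}$ satisfies $V_{2,2}(u)\lesssim\|\nabla u\|_2\|u\|_2^3$; Young's inequality then produces $\|u\|_2^6$ on the right, while $V_{1,2}$ only gives $\log 2\,\|u\|_2^4$ on the left. The powers do not close, so no bound on $\|u_n\|_2$ (hence on $\|u_n\|_\X$) follows. Relatedly, your geometry claim $I(u)\ge c_1\|u\|_\X^2-\dots$ is false: $\|\nabla u\|_2^2+\tfrac12 V_{1,2}(u)$ does \emph{not} dominate $\|u\|_\X^2$ (the paper stresses this explicitly). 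The geometry can be repaired by working with the $H^1$-norm, but the boundedness cannot.

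The paper circumvents this with two devices you did not anticipate. First, it introduces a comparison functional $G$ (dropping $+\tfrac1q V_{2,q}$) for which the combination $G-\tfrac14 G'[\cdot]$ directly bounds $\|\nabla u_n\|_2$ and $V_{1,q}(u_n)$; a further HLS/Gagliardo--Nirenberg chain then controls $\|u_n\|_2$, and this is where $q>8/3$ enters (one needs $\tfrac{2(3q-4)}{3(q-2)}<4$). The Cerami condition for $G$ yields divergence of the minimax levels. Second, for $I$ itself, the paper uses the Hirata--Ikoma--Tanaka augmented-functional trick: a scaling variable is added so that the resulting Cerami sequences satisfy an extra Pohozaev-type relation $J_\alpha(u_j)\to 0$. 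Combining $I(u_j)$ with $J_\alpha(u_j)$ (for suitable $\alpha<0$) kills $V_{0,q}$ while producing coefficients that do close, again under $8/3<q<4$. Only then does the weak Cerami condition (your compactness step) apply to give strong convergence in $\X_r$.
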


We remark that the assumption  $q\in (8/3,4)$ is purely technical and it is necessary to obtain the boundedness of  Cerami sequences (see Proposition \ref{pr:C} and Lemma \ref{lebound}). It would be interesting to consider the other cases.
	
The paper is divided as follows. In Section \ref{funct}, we introduce the functional setting showing the related embeddings properties. Section \ref{3} is devoted to the proof of our main theorem. Finally, in Section \ref{4}, we generalize our previous arguments considering 
		\begin{equation}\label{eqp}\tag{$\mathcal{P}_p$}
		-\Delta u  +(\log |\cdot|*|u|^p)|u|^{p-2}u=(\log|\cdot|*|u|^q)|u|^{q-2}u, \qquad \hbox{ in }\RD,
		\end{equation}
		and extending our existence result to the case $1<p<q$ and $2p^2/(p+1)<q<2p$. 
		
		In the following we will use the symbol $\|\cdot\|$ to denote the norm of $H^1(\RD)$. By $B_R(x,y)$ and $B_R$ we indicate the ball of radius $R$ centred in $(x,y)\in \RD$ and in the origin, respectively.

	\section{Functional framework and preliminary results}\label{funct}
	
	The following two results are related with weighted Lebesgue and Sobolev  spaces. In particular the first is an interpolation lemma and the second is a continuous embedding theorem. 
	
	Even if there is a rich literature on weighted Lebesgue and Sobolev spaces, up to our knowledge it is the first time that coercive weights are considered in such a type of theorems.
	
	In the following,  $V:\RD\to \R$ is a continuous and  positive  and for $1\le \tau<+\infty$ we denote by $L^\tau_V(\RD)$ the weighted Lebesgue space endowed with the norm
	\[
	\|u\|_{V,\tau}:=\left(\ird V(x)|u|^\tau  dx \right)^{\frac 1\tau}.
	\]

	\begin{lemma}\label{le:inter}
		Let $V:\RD\to \R$ is a continuous and  positive.
		The following continuous embedding holds for $2\le q<\tau< r<+\infty$ 
		$$L^q_V(\RD)\cap L^r_V(\RD)\hookrightarrow L^\tau_V(\RD).$$ 
	\end{lemma}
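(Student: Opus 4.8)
The plan is to prove this by a direct interpolation argument based on Hölder's inequality, exploiting the fact that the \emph{same} weight $V$ appears in all three spaces. Since $q<\tau<r$, the exponent $\tau$ lies between $q$ and $r$, so first I would introduce the interpolation parameter
$$\theta:=\frac{r-\tau}{r-q}\in(0,1),$$
chosen precisely so that $\tau=\theta q+(1-\theta)r$. This is the only place where the strict inequalities $q<\tau<r$ enter, and they guarantee $\theta\in(0,1)$.

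Next I would factor the integrand. Because the weight enters linearly and identically in the three norms, one can split $V=V^\theta V^{1-\theta}$ and write, for $u\in L^q_V(\RD)\cap L^r_V(\RD)$,
$$V|u|^\tau=\bigl(V|u|^q\bigr)^\theta\bigl(V|u|^r\bigr)^{1-\theta}.$$
Applying Hölder's inequality with the conjugate exponents $1/\theta$ and $1/(1-\theta)$ then yields
$$\ird V|u|^\tau\,dx\le\left(\ird V|u|^q\,dx\right)^{\theta}\left(\ird V|u|^r\,dx\right)^{1-\theta}=\|u\|_{V,q}^{q\theta}\,\|u\|_{V,r}^{r(1-\theta)}.$$
Raising to the power $1/\tau$ gives the interpolation estimate
$$\|u\|_{V,\tau}\le\|u\|_{V,q}^{q\theta/\tau}\,\|u\|_{V,r}^{r(1-\theta)/\tau},$$
which in particular shows $u\in L^\tau_V(\RD)$.

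Finally, to deduce that the inclusion is a \emph{continuous} embedding, I would observe that the two exponents add up to one, namely
$$\frac{q\theta}{\tau}+\frac{r(1-\theta)}{\tau}=\frac{\theta q+(1-\theta)r}{\tau}=1.$$
Hence the right-hand side is a weighted geometric mean of $\|u\|_{V,q}$ and $\|u\|_{V,r}$, which by Young's inequality (or simply the elementary bound $a^sb^{1-s}\le a+b$ for $a,b\ge0$ and $s\in[0,1]$) is controlled by $\|u\|_{V,q}+\|u\|_{V,r}$, the natural norm on the intersection space $L^q_V(\RD)\cap L^r_V(\RD)$. This yields the desired continuous embedding.

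I do not expect any genuine obstacle here: the argument is entirely elementary, the only points requiring care being the verification that $\theta\in(0,1)$ and that the powers of the two factors sum to one, so that Hölder applies and the resulting estimate is homogeneous of degree one. The feature the authors emphasize, namely that the \emph{coercive} weight $V$ appears in all three spaces, is in fact what makes this particular step clean, since the weight splits as $V^\theta V^{1-\theta}$ and no comparison between different weights is needed.
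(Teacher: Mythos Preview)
Your proof is correct and essentially identical to the paper's: both write $\tau$ as a convex combination of $q$ and $r$, split $V|u|^\tau=(V|u|^q)^\theta(V|u|^r)^{1-\theta}$, apply H\"older to obtain $\|u\|_{V,\tau}^\tau\le\|u\|_{V,q}^{q\theta}\|u\|_{V,r}^{r(1-\theta)}$, and then pass to the sum of norms via Young's inequality. The only cosmetic differences are the naming of the interpolation parameter ($\theta$ versus $\lambda=1-\theta$) and that the paper carries out the Young step at the level of the $\tau$-th powers before taking the $\tau$-th root, whereas you apply $a^sb^{1-s}\le a+b$ directly to the norms.
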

	
	\begin{proof}
		Let $\tau \in (q,r)$ and consider $\l \in (0,1)$ such that $\tau=\l r+(1-\l)q$. Applying the Young inequality, We have
		\begin{align*}
		\|u\|^\tau_{V,\tau}
		%=\ird V(x)|u|^\tau \, dx
		&
		%=\ird V(x)|u|^{\l r}|u|^{(1-\l)q}\, dx
		=\ird (V(x)|u|^r)^{\l }(V(x)|u|^q)^{1-\l}\, dx
		%\\
		%&
		\le \|u\|_{V,r}^{\l r}\|u\|_{V,q}^{(1-\l) q}
		\\
		&\le\frac{\l r}\tau \|u\|^\tau_{V,r}+\frac{(1-\l) q}\tau\|u\|^\tau_{V,q},
		%\left(\ird V(x)|u|^r\, dx\right)^{\l }\left(\ird V(x)|u|^q\, dx\right)^{1-\l}.
		\end{align*}
		%Moreover, we have
		%	\begin{equation*}
		%		\|u\|^\tau_{V,\tau}\le\frac{\l r}\tau \|u\|^\tau_{V,r}+\frac{(1-\l) q}\tau\|u\|^\tau_{V,q},
		%	\end{equation*}
		from which we conclude by the inequality $\sqrt[\tau]{a+b}\le \sqrt[\tau]{a}+\sqrt[\tau]{b}$ which holds for every $a\ge 0$ and $b\ge 0$ and $\tau\ge 2$.
	\end{proof}
	
	Now we define $\mathcal H_V$ as the completion of $C_0^{\infty}(\RD)$ with respect to the norm 
	$$\|u\|_V:=\sqrt{\|\n u\|_2^2+\|u\|^2_{V,2}}.$$
	
	\begin{theorem}\label{th:emb}
		Let $V:\RD\to \R$ is a continuous and  positive.		If the distributional derivatives of  $V$ are functions satisfying for a suitable $C>0$,
		\begin{equation}\label{gradv}
		|\n V(x)|\le C V^{\frac 32}(x),\quad \hbox{ a.e. in }\RD,
		\end{equation}   
		then for all $\tau\ge 2$ the following continuous embedding holds 
		$$\mathcal H_V \hookrightarrow L^\tau_V(\RD).$$ 
	\end{theorem}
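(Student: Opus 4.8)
The plan is to deduce the embedding from the endpoint Gagliardo--Nirenberg--Sobolev inequality in the plane, namely $\|w\|_2\le C_0\|\n w\|_1$ for $w\in W^{1,1}(\RD)$ (the borderline case $W^{1,1}(\RD)\hookrightarrow L^2(\RD)$), applied not to $u$ but to a suitable weighted power of $u$. Since $\mathcal H_V$ is by definition the completion of $C_0^\infty(\RD)$, it suffices to establish $\|u\|_{V,\tau}\le C\|u\|_V$ for every $u\in C_0^\infty(\RD)$ and then pass to the limit by density. The decisive idea is to test the Sobolev inequality with $w:=V^{1/2}|u|^{\tau/2}$, for which $\|w\|_2^2=\ird V|u|^\tau=\|u\|_{V,\tau}^\tau$; thus controlling $\|\n w\|_1$ controls exactly the quantity we want, and $w$ lies in $W^{1,1}(\RD)$ with compact support, so the inequality applies.

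Next I would compute the gradient, $\n w=\tfrac12 V^{-1/2}(\n V)|u|^{\tau/2}+\tfrac{\tau}{2}V^{1/2}|u|^{\tau/2-1}\n|u|$, and here the hypothesis \eqref{gradv} enters in an essential way: it gives $V^{-1/2}|\n V|\le C V$, i.e.\ the derivative of the weight is reabsorbed into the \emph{first} power of the weight itself. (This is precisely why the exponent $3/2$ is the natural one: a larger power would not let the estimate close.) Hence $|\n w|\le \tfrac{C}{2}V|u|^{\tau/2}+\tfrac{\tau}{2}V^{1/2}|u|^{\tau/2-1}|\n u|$, and integrating, then applying Cauchy--Schwarz to the second summand while observing that $V^{1/2}|u|^{\tau/2-1}=(V|u|^{\tau-2})^{1/2}$, the Sobolev inequality yields the self-referential estimate
\begin{equation*}
\|u\|_{V,\tau}^\tau\le C\Big(\|u\|_{V,\tau/2}^{\tau/2}+\|u\|_{V,\tau-2}^{(\tau-2)/2}\|\n u\|_2\Big)^2,
\end{equation*}
valid for every $\tau\ge2$ and involving on the right only the strictly smaller exponents $\tau/2$ and $\tau-2$, together with $\|\n u\|_2\le\|u\|_V$.

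It then remains to run this recursion. For $\tau=2$ the embedding is trivial since $\|u\|_{V,2}\le\|u\|_V$. Taking $\tau=4$, the right-hand side involves only the exponent $2$, whence $\|u\|_{V,4}^4\le C(\|u\|_{V,2}^2+\|u\|_{V,2}\|\n u\|_2)^2\le C'\|u\|_V^4$, giving $\mathcal H_V\hookrightarrow L^4_V(\RD)$. For $\tau\in(2,4)$ the recursion is not directly usable, since its right-hand exponents drop below $2$; here I would instead invoke the interpolation Lemma \ref{le:inter} with $q=2$, $r=4$, which combined with the two endpoint embeddings just obtained yields $\mathcal H_V\hookrightarrow L^\tau_V(\RD)$. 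Finally, for $\tau\ge4$ I would argue by induction over the intervals $[2,2m]$: assuming the embedding for all exponents in $[2,2m]$ with $m\ge2$, any $\tau\in(2m,2m+2]$ satisfies $\tau-2\in(2m-2,2m]\subseteq[2,2m]$ and $\tau/2\in(m,m+1]\subseteq[2,2m]$, so both norms on the right of the recursion are already controlled by $\|u\|_V$ and the estimate closes to give $\|u\|_{V,\tau}\le C_\tau\|u\|_V$. A standard density argument extends the inequality from $C_0^\infty(\RD)$ to all of $\mathcal H_V$.

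The main obstacle is conceptual rather than computational: because the \emph{same} coercive weight $V$ appears on both sides, the textbook device of applying Gagliardo--Nirenberg to $V^{1/\tau}u$ fails, producing fractional powers $V^{2/\tau}$ of the weight that cannot be matched to either term of $\|u\|_V$. The entire argument hinges on choosing $w=V^{1/2}|u|^{\tau/2}$ so that the weight enters linearly, and on the precise balance in \eqref{gradv} that lets the spurious term $V^{-1/2}\n V$ collapse back onto $V$; forcing these two exponents to agree is exactly what makes the recursion self-contained. The only other delicate point is the low range $\tau\in(2,4)$, where the recursion degenerates and one must fall back on the interpolation Lemma \ref{le:inter}.
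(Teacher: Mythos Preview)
Your proof is correct and essentially identical to the paper's: both apply the borderline inequality $W^{1,1}(\RD)\hookrightarrow L^2(\RD)$ to $w=V^{1/2}|u|^{\tau/2}$, use \eqref{gradv} to absorb the term $V^{-1/2}\n V$ back into $V$, derive the same recursion in the exponent, and close it by combining induction with the interpolation Lemma~\ref{le:inter}. The only cosmetic differences are that the paper writes the Sobolev step in the Brezis form $\|w\|_2\le\|\partial_{x_1}w\|_1^{1/2}\|\partial_{x_2}w\|_1^{1/2}$ and runs the induction over integer powers (taking $w=V^{1/2}|u|^m$ with $m\in\N$, then filling the intermediate exponents by interpolation), whereas you let $\tau$ be real throughout.
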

	\begin{proof}
		We will follow the scheme in \cite[Theorem 9.9]{B}. Consider a compactly supported function $w$ absolutely continuous with respect to both the variables.
		\\
		We have that, for any  $x =(x_1,x_2)\in\RD$,
		\begin{align*}
		|w(x)|&=\left|\int_{-\infty}^{x_1} \frac{\partial w}{\partial x_1}(t,x_2)\, dt\right|\le  \int_{-\infty}^{+\infty} \left|\frac{\partial w}{\partial x_1}(t,x_2)\right|\, dt\\
		|w(x)|&=\left|\int_{-\infty}^{x_2} \frac{\partial w}{\partial x_2}(x_1,t)\, dt\right|\le \int_{-\infty}^{+\infty} \left|\frac{\partial w}{\partial x_2}(x_2,t)\right|\, dt,
		\end{align*}
		and then, as in \cite{B},
		\begin{equation}\label{eq:Brezis}
		\|w\|_{2}\le \left\|\frac{\partial w}{\partial x_1}\right\|^{\frac 12}_{1}\left\|\frac{\partial w}{\partial x_2}\right\|^{\frac 12}_{1}.
		\end{equation}
		Now we consider $u\in C^1(\RD)$ compactly supported and  observe that, by \eqref{gradv}, $\sqrt{V(x)}u^2$ is absolutely continuous with respect to both the variables.
		Since by \eqref{gradv}, for $i=1,2$ 
		\begin{align*}
		\left\|\frac{\partial (\sqrt{V(x)}u^2)}{\partial x_i}\right\|^{\frac 12}_{1}
		&\le \left\|\frac{ V_{x_i}(x)}{2\sqrt{V(x)}}u^2\right\|_{1}^{\frac12}+\left\|2\sqrt{V(x)}uu_{x_i}\right\|_{1}^{\frac 12}\\
		&\le  C(\|u\|_{V,2}+\|u\|^{\frac 12}_{V,2}\|\n u\|_{2}^{\frac 12})
		\le C\|u\|_{V},
		\end{align*}
		using \eqref{eq:Brezis}  for $w=\sqrt{V(x)}u^2$, we have
		\[
		\|u\|_{V,4}^2\le C\|u\|^2_{V}.
		\]
		%			\begin{align*}
		%				\|u\|_{V,4}^2&\le\prod_{1\le i\le 2}\left\|\frac{\partial (\sqrt{V}u^2)}{\partial x_i}\right\|^{\frac 12}_{1}\\
		%				&\le \prod_{1\le i\le 2}\left(\left\|\frac{ V_{x_i}(x)}{2\sqrt{V(x)}}u^2\right\|_{1}^{\frac12}+\left\|2\sqrt{V(x)}uu_{x_i}\right\|_{1}^{\frac 12}\right)\\
		%				&\le \prod_{1\le i\le 2} C(\|u\|_{V,2}+\|u\|^{\frac 12}_{V,2}\|\n u\|_{2}^{\frac 12})\\
		%				&\le C\|u\|^2_{V}.
		%			\end{align*}
		By density, we deduce that $\calh_V$ is continuously embedded into $L^4_V(\RD)$ and so, by Lemma \ref{le:inter}, into  $L^\tau_V(\RD)$, for $\tau\in [2,4]$.
		% and so, by Lemma \ref{le:inter}, also in $L^\sigma_V(\RD)$, for any $\sigma\in [2,4]$. 
		\\
		By analogous computations applied to $w=\sqrt{ V(x)} |u|^m$ for some natural number $m\ge 2$, we have that
		\begin{align*}
		\|u\|_{V,2m}^m
		&\le  C(\|u\|_{V,m}^m+\|u\|^{m-1}_{{V,2m-2}}\|\n u\|_{2})
		\\
		&\le  C(\|u\|_{V,m}^m+\|u\|^{m}_{{V,2m-2}}+\|\n u\|_{2}^m)
		\end{align*}
		and so
		\[
		\|u\|_{V,2m}
		\le   C(\|u\|_{V,m}+\|u\|_{{V,2m-2}}+\|\n u\|_{2}).
		\]
		So, if $u\in L^{2m-2}_V(\RD)$ then, by  Lemma \ref{le:inter}, we deduce that $u\in L^{2m}_V(\RD)$, too. By induction,  a density argument allows us to  conclude.
		%So, if $u\in L^\tau_V(\RD)$, for some $\tau\ge 4$, then, taking $m=\tau/2+1$, we deduce that $u\in L^{\tau+2}_V(\RD)$, too. Iterating this process, by Lemma \ref{le:inter} and a density argument, we conclude.
	\end{proof}

	\begin{remark}
		Some words about assumption \eqref{gradv} are in order. This hypothesis essentially prevents the potential $V$ being {\em too oscillating} at infinity. For example $V(x)=|x|^2(\sin (e^{|x|})+2)+1$ does not satisfies \eqref{gradv}.
	\end{remark}

	\begin{remark}
		As a natural question in this preliminary analysis of the functional setting, we are interested in understanding if the continuous embedding we have just proved actually turns out to be also compact. We remark that the presence of a coercive potential in the norm of the Sobolev space $\mathcal H_V$ guarantees the compact embedding of $\mathcal H_V$ in all the Lebesgue spaces $L^\tau(\RD)$, for $\tau\ge 2$, exploiting the classical result proved by Rabinowitz \cite{R}.
		\\
		The presence of the same coercive potential as a weight also in  the Lebesgue space changes significantly the matter as recently shown in \cite{APS} through a counterexample. Indeed, therein, the following counterexample has been proved. We write the details for completeness.

	\end{remark}

	\begin{counterexample}\label{contro}
		Let $V:\RD\to \R$ be a positive coercive uniformly continuous function. Then $\calh_V$ is not compactly embedded into $L^\tau_V(\RD)$, for $\tau>2$.
	\end{counterexample}
	
	\begin{proof}
		In the following, for $n\ge 1$, we denote $\mathcal{B}'_n:=B_{\frac 1{2\sqrt{V(n,0)}}}(n,0)$, $\mathcal{B}''_n:=B_{\frac 1{\sqrt{V(n,0)}}}(n,0)$  and $\mathcal{A}_n:=\mathcal{B}_n''\setminus \mathcal{B}_{n}'$.
		For any $n\ge1$, we consider $u_n\in C(\RD)$ such that
		\[
		u_n(x):=
		\begin{cases}
		1 &\text{for }x\in \mathcal{B}_{n}',
		\\
		0 &\text{for }x\in \RD\setminus \mathcal{B}_{n}'',
		\end{cases}
		\]
		and with $|\n u_n(x)|=2\sqrt{V(n,0)}$, for  $x\in\mathcal{A}_{n}$.
		\\
		We have that
		\begin{align*}
		\ird |\n u_n|^2\, dx
		&=\int_{\mathcal{A}_n}|\n u_n|^2\,dx
		=4V(n,0) \cdot \meas(\mathcal{A}_n)\simeq c>0,
		\\
		\ird V(x)u_n^2\,dx
		&\le \int_{\mathcal{B}_n''}V(x)\,dx
		\le\max_{\overline{\mathcal{B}_n''}}V\cdot \meas(\mathcal{B}_n'')\simeq c>0,
		\\
		\ird V(x)|u_n|^\tau\,dx
		&\ge \int_{\mathcal{B}_n'}V(x)\,dx
		\ge\min_{\overline{\mathcal{B}_n'}}V \cdot \meas(\mathcal{B}_n')\simeq c'>0.
		\end{align*}
		Therefore, $\{u_n\}$ is a sequence of $\calh_V$ such that $0<c_1\le \|u_n\|_V\le c_2$, $u_n\weakto 0$ in $\calh_V$ but $\|u_n\|_{V,\tau}\ge c_3>0$. This shows that $\calh_V$ is not compactly embedded into $L^\tau_V(\RD)$.
	\end{proof}

	In our specific situation, we easily observe that the weight $V(x)=\log (2+|x|)$  fulfils assumptions of Theorem \ref{th:emb} and Counterexample \ref{contro}, and
	$$\X=\calh_{\log(2+|\cdot|)}\quad\|\cdot\|_{*,2}=\|\cdot\|_{\log(2+|\cdot|),2}.$$ 
	Thus, taking into account the well know result in \cite{R}, it is immediately proved  the following
	\begin{proposition}\label{embX}
		For all $\tau\in [2,+\infty)$,  $\X$ is continuously (but not compactly) embedded into $L^\tau_{\log(2+|\cdot~\!|)}(\RD)$  and compactly embedded into $L^\tau(\RD)$.
	\end{proposition}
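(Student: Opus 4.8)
The plan is to verify, for the explicit weight $V(x)=\log(2+|x|)$, the hypotheses of the three ingredients already at our disposal --- the continuous embedding Theorem \ref{th:emb}, the non-compactness Counterexample \ref{contro}, and the classical compactness result of Rabinowitz \cite{R} --- and then simply invoke them. Since the identification $\X=\calh_{\log(2+|\cdot|)}$ and $\|\cdot\|_{*,2}=\|\cdot\|_{\log(2+|\cdot|),2}$ is already recorded, each assertion reduces to checking that this particular $V$ meets the relevant structural conditions, so that no independent argument is really needed.

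First I would establish the continuous embedding $\X\hookrightarrow L^\tau_{\log(2+|\cdot|)}(\RD)$ for all $\tau\ge 2$. Clearly $V$ is continuous and bounded below by $\log 2>0$, hence positive. The only genuine point is the gradient condition \eqref{gradv}. As $V$ is the composition of the smooth function $t\mapsto\log(2+t)$ with the Lipschitz map $x\mapsto|x|$, it is globally Lipschitz, and its distributional gradient is the $L^\infty$ function $\n V(x)=\frac{x}{(2+|x|)|x|}$ for $x\ne 0$, so that $|\n V(x)|=\frac{1}{2+|x|}$ a.e. Hence \eqref{gradv} amounts to
\[
\frac{1}{2+|x|}\le C\big(\log(2+|x|)\big)^{3/2}\quad\text{a.e.},
\]
which, setting $t=2+|x|\ge 2$, is the boundedness of $f(t)=\big(t(\log t)^{3/2}\big)^{-1}$ on $[2,+\infty)$. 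Since $f$ is decreasing there and vanishes at infinity, it attains its maximum at $t=2$, and \eqref{gradv} holds with $C=\big(2(\log 2)^{3/2}\big)^{-1}$. Theorem \ref{th:emb} then yields the continuous embedding for every $\tau\ge 2$.

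For the failure of compactness into $L^\tau_{\log(2+|\cdot|)}(\RD)$ when $\tau>2$, I would only note that $V$ is positive, coercive (it diverges as $|x|\to+\infty$) and uniformly continuous (being Lipschitz), so it falls exactly under the hypotheses of Counterexample \ref{contro}, which supplies a bounded sequence converging weakly to zero in $\X$ yet staying bounded away from zero in the $L^\tau_V$-norm. Finally, for the compact embedding into the unweighted spaces $L^\tau(\RD)$, $\tau\in[2,+\infty)$: since $V\ge\log 2$ one has $\X\hookrightarrow\H$ continuously, while the coercivity $V(x)\to+\infty$ is precisely what powers Rabinowitz's argument \cite{R} to upgrade the Sobolev embedding $\H\hookrightarrow L^\tau(\RD)$ (continuous for all $\tau\ge 2$ since $N=2$) to a compact one.

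No serious obstacle is expected, the whole proof being a matter of matching the three sets of hypotheses; the only computation deserving care is the verification of \eqref{gradv} near the origin, where $V$ is merely Lipschitz and $\n V$ must be read as a distributional derivative, and the elementary monotonicity check that pins down the admissible constant $C$.
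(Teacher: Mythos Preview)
Your proposal is correct and matches the paper's own approach: the paper simply observes that $V(x)=\log(2+|x|)$ satisfies the hypotheses of Theorem~\ref{th:emb} and Counterexample~\ref{contro}, and then invokes Rabinowitz~\cite{R} for the compact embedding into $L^\tau(\RD)$, without spelling out any of the verifications. Your argument is the same, only with the details (the computation of $|\nabla V|$ and the check of \eqref{gradv}) filled in explicitly.
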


	%\begin{proof}
	%Observe that $$\X=\calh_{\log(2+|\cdot|)}\quad\text{ and %}\quad\|\cdot\|_{*,2}=\|\cdot\|_{2,\log(2+|\cdot|)}.$$ Since $V(x)=\log(2+|x|)$ satisfies %\eqref{gradv}, the conclusion follows by Theorem \ref{th:emb}.
	%\end{proof}

	The following proposition shows that the functional $I$ is well defined  in $\X$.
	\begin{proposition}\label{levikx}
		For every $u\in \X$ and $k\ge 2$, the following inequalities hold
		\begin{align}
		0\le V_{1,k}(u)&\le C \|u\|_{*,k}^k\|u\|_k^k,\label{control 1}
		\\
		0\le V_{2,k}(u)&\le C \|u\|_{\frac{4k}3}^{2k}.\label{control 2}
		\end{align}
		In particular, there exists a positive constant $C>0$ such that,
		for every $u\in \X$, we have
		\begin{equation*}\label{eq:cont1}
		V_{i,k}(u)\le C\|u\|^{2k}_{\X}, \quad \hbox{ for }k\ge 2 \hbox{ and } i\in\{1,2\}.
		\end{equation*}		
	\end{proposition}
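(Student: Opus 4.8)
The plan is to prove the two main inequalities \eqref{control 1} and \eqref{control 2} separately, since they control the two pieces $V_{1,k}$ and $V_{2,k}$ by different mechanisms, and then combine them via the embeddings of Proposition \ref{embX} to obtain the uniform bound by $\|u\|_{\X}^{2k}$. Throughout I would argue first for $u\in C_0^\infty(\RD)$ and then extend by density, using that $\X$ is the completion of $C_0^\infty(\RD)$ in the $\X$-norm.

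For \eqref{control 1}, the key structural fact is the elementary logarithm estimate $\log(2+|x-y|)\le \log(2+|x|)+\log(2+|y|)$, which follows from $2+|x-y|\le 2+|x|+|y|\le (2+|x|)(2+|y|)$ together with subadditivity of the logarithm and the fact that $\log$ of a product splits. Since $\log(2+|x-y|)\ge 0$ the nonnegativity $V_{1,k}(u)\ge 0$ is immediate. Substituting the upper bound into the double integral and using $\iint \log(2+|x|)|u(x)|^k|u(y)|^k\,dx\,dy=\big(\ird\log(2+|x|)|u(x)|^k\,dx\big)\big(\ird|u(y)|^k\,dy\big)$, together with the symmetric term, gives exactly $V_{1,k}(u)\le 2\,\|u\|_{*,k}^k\|u\|_k^k$, recalling the definition \eqref{norma*k} of $\|\cdot\|_{*,k}$. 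This is the most transparent step.

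For \eqref{control 2}, the nonnegativity is again clear since $\log(1+2/|x-y|)\ge 0$. The upper bound is the Hardy--Littlewood--Sobolev (HLS) step: one estimates $\log(1+2/|x-y|)\le C/|x-y|^{\sigma}$ for a suitable small $\sigma\in(0,2)$ (say $\sigma=1/2$, which is natural to make the conjugate exponents come out to $4k/3$), and then applies the HLS inequality in $\RD$ with kernel $|x-y|^{-\sigma}$ acting on the pair $|u|^k,|u|^k$. The relevant HLS exponent relation in dimension $2$ reads $2/s+\sigma/2=2$ with the two factors both in $L^s$, forcing $s=4/3$ when $\sigma=1/2$, so that the factors $|u|^k$ lie in $L^{4/3}(\RD)$, i.e.\ $u\in L^{4k/3}(\RD)$; this yields $V_{2,k}(u)\le C\big\||u|^k\big\|_{4/3}^2=C\|u\|_{4k/3}^{2k}$, which is \eqref{control 2}.

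Finally, to get the uniform bound $V_{i,k}(u)\le C\|u\|_{\X}^{2k}$, I would feed both estimates into the continuous embeddings of Proposition \ref{embX}. For $V_{1,k}$ one has $\|u\|_{*,k}^k\|u\|_k^k\le \|u\|_{\X}^k\|u\|_{\X}^k=\|u\|_{\X}^{2k}$, using both the embedding $\X\hookrightarrow L^k_{\log(2+|\cdot|)}(\RD)$ (which controls $\|u\|_{*,k}$) and $\X\hookrightarrow L^k(\RD)$ (which controls $\|u\|_k$). For $V_{2,k}$ one uses $\X\hookrightarrow L^{4k/3}(\RD)$ to bound $\|u\|_{4k/3}^{2k}\le C\|u\|_{\X}^{2k}$. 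I expect the only real subtlety to be the careful verification of the HLS exponent bookkeeping in \eqref{control 2}, including checking that the chosen $\sigma$ keeps both integrability conditions admissible; the logarithmic splitting in \eqref{control 1} and the passage to the density limit are routine.
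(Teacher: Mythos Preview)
Your approach is essentially the paper's: the same logarithm splitting $\log(2+|x-y|)\le\log(2+|x|)+\log(2+|y|)$ for \eqref{control 1}, Hardy--Littlewood--Sobolev for \eqref{control 2}, and Proposition \ref{embX} for the final bound.

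There is one computational slip worth fixing. In the HLS step you take $\sigma=1/2$ and claim this forces $s=4/3$, but the HLS relation in $\RD$ with equal exponents reads $\tfrac{2}{s}+\tfrac{\sigma}{2}=2$, so $\sigma=1/2$ gives $s=8/7$, not $4/3$. To land on the stated exponent $4k/3$ you need $\sigma=1$. The paper obtains this directly via the elementary bound $\log(1+2/|x-y|)\le 2/|x-y|$ (i.e.\ $\log(1+t)\le t$), which is both simpler than the general $\log(1+2/r)\le C r^{-\sigma}$ estimate and delivers the correct exponent without any bookkeeping. With that correction your argument matches the paper's.
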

	\begin{proof}
		Let $u\in \X$. Then 
		\begin{align*}
		V_{1,k}(u)&\le \ird\ird \big(\log(2+|x|)+\log(2+|y|)\big)|u(x)|^k|u(y)|^k\,dx\,dy
		=2\|u\|_{*,k}^k\|u\|_k^k,
		\\
		V_{2,k}(u)&\le 2\ird \ird \frac 1{|x-y|}|u(x)|^k|u(y)|^k\,dx\,dy\le C \|u\|_{\frac{4k}3}^{2k}.
		\end{align*}
		The last statement follows by Proposition \ref{embX}.
	\end{proof}
	
	Arguing as in \cite[Lemma 2.2]{CW} 
	we can prove the following
	\begin{proposition}
		The functionals  $V_{1,k}$ and $V_{2,k}$, with $k\ge 2$, and so also $I$, are of class $C^1$ in $\X$.
		Moreover critical points of $I$ are weak solutions of \eqref{eq}.
	\end{proposition}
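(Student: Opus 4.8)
The plan is to follow the scheme of \cite[Lemma 2.2]{CW}, establishing the assertion for each $V_{j,k}$, $j\in\{1,2\}$, $k\ge2$, separately; the conclusion for $I$ then follows from the decomposition
\[
I(u)=\ird|\n u|^2\,dx+\tfrac12 V_{1,2}(u)-\tfrac12 V_{2,2}(u)-\tfrac1q V_{1,q}(u)+\tfrac1q V_{2,q}(u)
\]
together with the smoothness of the Dirichlet term. First I would write down the candidate Gâteaux differential obtained by differentiating under the integral sign: exploiting the symmetry of the kernels in $x$ and $y$,
\[
V_{j,k}'(u)[v]=2k\ird\ird K_j(x-y)\,|u(x)|^k|u(y)|^{k-2}u(y)\,v(y)\,dx\,dy,
\]
with $K_1(z)=\log(2+|z|)$ and $K_2(z)=\log\!\big(1+2/|z|\big)$. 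The heart of the proof is then threefold: (i) show that for fixed $u\in\X$ this expression is a bounded linear functional of $v\in\X$; (ii) verify that it is the actual Gâteaux derivative; and (iii) prove that $u\mapsto V_{j,k}'(u)$ is continuous from $\X$ into $\X^*$.

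For step (i), and for the continuity estimates in step (iii), I would reuse the two mechanisms already behind Proposition \ref{levikx}. For the $V_{1,k}$-part, the key is the elementary inequality $\log(2+|x-y|)\le\log(2+|x|)+\log(2+|y|)$, which splits the double integral and, after Hölder's inequality (once in the unweighted $L^k$ duality and once in the $\log(2+|\cdot|)\,dx$-weighted $L^k$ duality), yields a bound of the form
\[
|V_{1,k}'(u)[v]|\le C\big(\|u\|_{*,k}^k\|u\|_k^{k-1}\|v\|_k+\|u\|_k^k\|u\|_{*,k}^{k-1}\|v\|_{*,k}\big),
\]
which is controlled by $\|v\|_\X$ through Proposition \ref{embX}. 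For the $V_{2,k}$-part, the bound $K_2(z)\le 2/|z|$ reduces everything to the Hardy--Littlewood--Sobolev inequality exactly as in \eqref{control 2}. These same estimates, applied to the difference quotients $t^{-1}\big(V_{j,k}(u+tv)-V_{j,k}(u)\big)$, supply integrable dominating functions, so that step (ii) follows from the mean value theorem combined with dominated convergence.

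For step (iii) I would take $u_n\to u$ in $\X$, pass to a subsequence converging a.e.\ and dominated in the relevant weighted and unweighted Lebesgue spaces, and apply dominated convergence to $V_{j,k}'(u_n)[v]-V_{j,k}'(u)[v]$ uniformly over $\|v\|_\X\le1$; Gâteaux differentiability together with continuity of the derivative upgrades the functionals to class $C^1$. Finally, reassembling $V_{0,k}=V_{1,k}-V_{2,k}$ via \eqref{Vkrel}, a critical point $u$ of $I$ satisfies, for every $v\in\X$,
\[
\ird\n u\cdot\n v\,dx+\ird\big(\log|\cdot|*|u|^2\big)uv\,dx-\ird\big(\log|\cdot|*|u|^q\big)|u|^{q-2}uv\,dx=0,
\]
which is precisely the weak formulation of \eqref{eq}. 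The main obstacle I expect is the continuity of the derivative of $V_{1,k}$: unlike the $V_{2,k}$-terms, where the Hardy--Littlewood--Sobolev inequality does all the work, here the coercive weight $\log(2+|\cdot|)$ sits inside the integrals and carries no sign, so estimating the error $V_{1,k}'(u_n)[v]-V_{1,k}'(u)[v]$ uniformly in $v$ forces one to track the weighted norms carefully through the splitting inequality and the weighted Hölder estimate.
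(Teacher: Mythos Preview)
Your proposal is correct and follows precisely the route the paper itself indicates, namely arguing as in \cite[Lemma~2.2]{CW}: you identify the candidate derivatives, bound them via the splitting $\log(2+|x-y|)\le\log(2+|x|)+\log(2+|y|)$ together with Proposition~\ref{embX} for $V_{1,k}$ and via Hardy--Littlewood--Sobolev for $V_{2,k}$, and then run the standard Gâteaux-plus-continuity scheme. One minor slip: the weight $\log(2+|\cdot|)$ does have a sign (it is bounded below by $\log 2$), so the difficulty you flag in step~(iii) is really about its unboundedness rather than a lack of sign, but this does not affect your argument.
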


		\begin{lemma}\label{le:wk}
			For $k\in \{2,q\}$, the function $w_k:\RD \to \R$, defined as 
			$$w_k(x):=\ird \log(|x-y|)|u(y)|^kdy$$
			satisfies $\Delta w_k=2\pi |u|^k$ and 
			\begin{equation}\label{wk}
			w_k(x)-\|u\|_k^k\log|x|\to 0, \quad |x|\to +\infty.
			\end{equation}
		\end{lemma}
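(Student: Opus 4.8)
The plan is to treat the two assertions separately, in both cases exploiting that $\tfrac{1}{2\pi}\log|\cdot|$ is the fundamental solution of the Laplacian in $\RD$, so that $w_k=\log|\cdot|*|u|^k$ and, at least formally, $\Delta w_k=(\Delta\log|\cdot|)*|u|^k=2\pi|u|^k$. Throughout I use that for $u\in\X$ and $k\in\{2,q\}$ one has $\|u\|_{*,k}<+\infty$ and $u\in L^\tau(\RD)$ for all $\tau\ge2$, both guaranteed by Proposition \ref{embX}.

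For the identity $\Delta w_k=2\pi|u|^k$ I would argue in the sense of distributions. Fixing $\varphi\in C_0^\infty(\RD)$ with $\supp\varphi\subset B_R$, I write
\[
\langle\Delta w_k,\varphi\rangle=\ird w_k(x)\Delta\varphi(x)\,dx=\ird\ird\log|x-y|\,|u(y)|^k\,\Delta\varphi(x)\,dy\,dx .
\]
The first step is to justify Fubini's theorem: since $\Delta\varphi$ is bounded with support in $B_R$, the double integral is controlled by $\int_{B_R}\int_{\RD}\big|\log|x-y|\big|\,|u(y)|^k\,dy\,dx$, which is finite because the logarithmic singularity is locally integrable and, for $|y|$ large, $\big|\log|x-y|\big|\le C\log(2+|y|)$ is integrable against $|u|^k$ thanks to $\|u\|_{*,k}<+\infty$. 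Exchanging the order of integration and using $\ird\log|x-y|\,\Delta\varphi(x)\,dx=\langle\Delta\log|\cdot-y|,\varphi\rangle=2\pi\varphi(y)$ then gives $\langle\Delta w_k,\varphi\rangle=2\pi\ird|u(y)|^k\varphi(y)\,dy$, which is the claim.

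For the asymptotics \eqref{wk} I would first rewrite, using $\|u\|_k^k=\ird|u(y)|^k\,dy$,
\[
w_k(x)-\|u\|_k^k\log|x|=\ird\log\frac{|x-y|}{|x|}\,|u(y)|^k\,dy ,
\]
and split the domain into $\{|y|\le|x|/2\}$ and $\{|y|>|x|/2\}$. On the inner region $\tfrac12\le\tfrac{|x-y|}{|x|}\le\tfrac32$, hence $\big|\log\tfrac{|x-y|}{|x|}\big|\le\log 2$; since the integrand converges pointwise to $0$ as $|x|\to+\infty$ and is dominated by $\log 2\,|u(y)|^k\in L^1(\RD)$, dominated convergence makes this contribution vanish. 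On the outer region I would estimate $\big|\log\tfrac{|x-y|}{|x|}\big|\le\big|\log|x-y|\big|+\log|x|$ and treat the two terms: the term $\log|x|\int_{|y|>|x|/2}|u|^k\,dy$ is bounded by $\tfrac{\log|x|}{\log(2+|x|/2)}\int_{|y|>|x|/2}\log(2+|y|)|u|^k\,dy$, which tends to $0$ since the prefactor stays bounded and the integral is a tail of the finite quantity $\|u\|_{*,k}^k$. For the term with $\log|x-y|$ I split once more according to $|x-y|\ge1$ or $|x-y|<1$: on $\{|x-y|\ge1\}$ one uses $|x|<2|y|$ to bound $0\le\log|x-y|\le C\log(2+|y|)$ and concludes again by the weighted tail, while on $\{|x-y|<1\}$ one has $\big|\log|x-y|\big|=\log\tfrac1{|x-y|}$, and a Hölder estimate separates the translation-invariant finite factor $\big(\int_{B_1}\big|\log|z|\big|^{s}\,dz\big)^{1/s}$ from $\|u\|_{L^{ks'}(B_1(x))}^{k}$, the latter tending to $0$ because the unit ball centred at $x$ escapes to infinity and $u\in L^{ks'}(\RD)$.

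The main obstacle is precisely the outer region, where two distinct mechanisms must be controlled at once: the logarithmic growth $\log|x|$ of the kernel at infinity, which is absorbed by the finite-energy integrability $\|u\|_{*,k}<+\infty$ coming from the coercive weight, and the local logarithmic singularity of the kernel near $y=x$, which travels to infinity together with $x$ and is tamed by the $L^\tau$-integrability of $u$. Everything else reduces to routine dominated-convergence and Hölder bookkeeping.
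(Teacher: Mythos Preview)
Your proof is correct and follows essentially the same strategy as the paper: rewrite $w_k(x)-\|u\|_k^k\log|x|$ as $\int h(x,y)|u(y)|^k\,dy$ with $h(x,y)=\log(|x-y|/|x|)$, split the domain, apply dominated convergence on the region where $h$ is bounded, and control the remaining part via the weighted integrability $\|u\|_{*,k}<+\infty$. The only cosmetic differences are that the paper splits according to $|x-y|\lessgtr |x|/2$ rather than $|y|\lessgtr |x|/2$, and that your H\"older treatment of the local singularity $\{|x-y|<1\}$ is more explicit than the paper's corresponding estimate.
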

		
		\begin{proof}
			As already observed in \cite[Lemma 2.3]{CW}, $w_2$ is locally bounded and satisfies \eqref{wk}. Let us prove that the same holds also for $w_q$. Only some of the arguments are those of \cite[Lemma 2.3]{CW} are the same, and so we write all the details for completeness.
			\\
			If $|x|\le 1$, we have
			\begin{equation*}
			|w_q(x)|\le \int_{B_2(x)} \big|\log(|x-y|)\big||u(y)|^qdy
			+\int_{\RD\setminus B_2(x)} \big|\log(|x-y|)\big||u(y)|^qdy
			\end{equation*}
			Since $1 \le|x-y| \le 2+|y|$ for $y \in \mathbb{R}^2  \setminus B_2(x)$, we find that
			$$
			\int_{\RD\setminus B_2(x)} \log (|x-y|)|u(y)|^q d y \le\|u\|_{*,q}^q .
			$$
			While
			\begin{equation*}
			\int_{B_2(x)} \big|\log(|x-y|)\big||u(y)|^qdy
			\le \left( \int_{B_2} \big|\log(|y|)\big|^2dy\right)^{\frac 12}\|u\|_{2q}^q<+\infty.
			\end{equation*}
			So $w_q\in L^\infty(B_1)$. 
			\\
			Let's now consider $|x|\ge 1$. Observe that
			$$
			w_q(x)-\|u\|_q^q\log |x|=\ird h(x, y) u^2(y) d y$$ with  
			$$h(x, y)=\log |x-y|-\log |x|=\log \frac{|x-y|}{|x|} .
			$$
			Note that $h(x, y) \rightarrow 0$, as $|x| \rightarrow \infty$ for every $y \in \mathbb{R}^2$. Moreover,
			$$
			\log \frac{1}{2} \le h(x, y) \chi_{\left\{|y-x| \ge \frac{1}{2}|x|\right\}}(y)\le \log (1+|y|) \quad\text { for all } x, y \in \mathbb{R}^2 \text { with }|x| \ge 1.
			$$
			Since $\log \frac{1}{2}|u|^q$ and $\log (1+|y|) |u|^q$ are in $L^1(\RD)$, we deduce that 
			\begin{equation}\label{h1}
			\int_{\left\{|y-x| \ge \frac{1}{2}|x|\right\}} h(x, y) |u(y)|^qdy \to 0, \qquad \text{as }|x|\to +\infty.
			\end{equation}
			While, since for $|y-x| \le \frac{1}{2}|x|$, we have
%			\todo[inline]{\[
%				|x|-|y|\le |y-x| \le \frac{1}{2}|x|
%				\]
%			}
			\[
			|x|\le 2|y|\le 2(1+|y|),
			\]
			and so
			\begin{align*}
			0&\le \log|x|\int_{\left\{|y-x| \le \frac{1}{2}|x|\right\}} |u(y)|^qdy 
			\le \log|x|\int_{\left\{|y| \ge \frac{1}{2}|x|\right\}} |u(y)|^qdy 
			\\
			&\le \int_{\left\{|y| \ge \frac{1}{2}|x|\right\}}\log\big(2(1+|y|)\big) |u(y)|^qdy \to 0, \qquad \text{as }|x|\to +\infty.
			\end{align*}
			Moreover
			\begin{align*}
			0&\le \int_{\left\{|y-x| \le \frac{1}{2}|x|\right\}} \big|\log|x-y|\big||u(y)|^qdy 
			\\
			&\le \int_{\left\{|y| \ge \frac{1}{2}|x|\right\}}\log(1+|y|) |u(y)|^qdy 
			\to 0, \qquad \text{as }|x|\to +\infty.
			\end{align*}
			These last inequalities imply that
			\begin{equation}\label{h2}
			\int_{\left\{|y-x| \le \frac{1}{2}|x|\right\}} h(x, y) |u(y)|^qdy \to 0, \qquad \text{as }|x|\to +\infty.
			\end{equation}
			Now \eqref{wk} follows by \eqref{h1} and \eqref{h2}.
		\end{proof}
		
		\begin{proposition}\label{regola}
			If $u\in \X$ is a weak solution to \eqref{eq}, then $u$ is a classical solution.
		\end{proposition}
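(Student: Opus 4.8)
The plan is to rewrite \eqref{eq} as a \emph{local} semilinear elliptic equation with continuous coefficients and then run a standard elliptic bootstrap. Using the functions introduced in Lemma \ref{le:wk}, a weak solution $u\in\X$ satisfies
\[
-\Delta u = w_q\,|u|^{q-2}u - w_2\,u =: g \qquad \text{in } \RD,
\]
in the distributional sense, where $w_2,w_q$ are the logarithmic potentials associated with $|u|^2$ and $|u|^q$. The key observation is that, although the problem is nonlocal, once $w_2$ and $w_q$ are regarded as fixed coefficients the equation for $u$ is purely local, so interior elliptic estimates apply on every ball.

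First I would record the integrability coming from the functional setting: since $\X\hookrightarrow\H$ and $H^1(\RD)\hookrightarrow L^s(\RD)$ for every $s\in[2,+\infty)$, we have $u\in L^s_{\mathrm{loc}}(\RD)$ for all finite $s$, whence $|u|^{k}\in L^s_{\mathrm{loc}}(\RD)$ for every $k\ge1$ and every finite $s$. By Lemma \ref{le:wk}, $w_2$ and $w_q$ are locally bounded; combining this with the previous integrability, the right-hand side $g$ belongs to $L^s_{\mathrm{loc}}(\RD)$ for every $s<+\infty$. Calder\'on--Zygmund interior estimates then give $u\in W^{2,s}_{\mathrm{loc}}(\RD)$ for every $s<+\infty$, and Morrey's embedding yields $u\in C^{1,\alpha}_{\mathrm{loc}}(\RD)$ for every $\alpha\in(0,1)$. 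In particular $u$ is continuous.

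With $u$ now H\"older continuous, I would upgrade the coefficients. Since the maps $t\mapsto|t|^k$ and $t\mapsto|t|^{q-2}t$ are locally Lipschitz for $q>2$, the functions $|u|^2$ and $|u|^q$ are locally H\"older continuous; recalling from Lemma \ref{le:wk} that $\Delta w_k=2\pi|u|^k$, interior Schauder estimates give $w_2,w_q\in C^{2,\alpha}_{\mathrm{loc}}(\RD)$, in particular they are locally H\"older continuous. Consequently $g=w_q|u|^{q-2}u-w_2u$ is a product of locally H\"older continuous functions, so $g\in C^{0,\alpha}_{\mathrm{loc}}(\RD)$. A final application of the interior Schauder estimate to $-\Delta u=g$ produces $u\in C^{2,\alpha}_{\mathrm{loc}}(\RD)$, hence the equation holds pointwise and $u$ is a classical solution.

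The only genuinely non-routine point is the treatment of the nonlocal coefficients, and this is already settled by Lemma \ref{le:wk}: it is precisely the local boundedness of $w_2,w_q$ that starts the bootstrap, and the Poisson equation $\Delta w_k=2\pi|u|^k$ that lets their regularity improve in step with that of $u$. Note also that only interior (local) estimates are required, so the logarithmic growth of $w_k$ at infinity, though crucial for the finiteness of the energy, plays no role in the regularity argument.
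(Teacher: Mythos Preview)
Your argument is correct and follows essentially the same bootstrap strategy as the paper: both start from the local boundedness of $w_2,w_q$ provided by Lemma~\ref{le:wk}, feed this into interior elliptic estimates for $-\Delta u=g$, and then iterate. The only difference is cosmetic---you cite Calder\'on--Zygmund and Schauder estimates, while the paper uses the $L^2$-based theorems in Gilbarg--Trudinger together with a potential-theoretic result from Lieb--Loss to upgrade $w_k$---but the underlying idea is identical.
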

		
		\begin{proof}
			Denoting by $f:=w_q|u|^{q-2}u -w_2u$, since, by Lemma \ref{le:wk}, $w_k\in L^\infty_{\rm loc}(\RD)$ for $k\in \{2,q\}$, we deduce that $f\in L^\tau_{\rm loc}(\RD)$, for all $\tau\in [1,+\infty)$ and so, by \cite[Theorem 8.8]{GT}, $u\in W^{2,2}_{\rm loc}(\RD)$. Moreover, by \cite[Theorem 10.2]{LL}, $w_k\in C^{1,\a}_{\rm loc}(\RD)$, for $k\in \{2,q\}$ and $\a \in (0,1)$. This implies that $f\in W^{1,2}_{\rm loc}(\RD)$ and so, using \cite[Theorem 8.10]{GT}, we conclude that $u\in W^{3,2}_{\rm loc}(\RD)$ and so $u\in C^{2,\a}_{\rm loc}(\RD)$.
		\end{proof}

	In order to find a solution at some minimax level, in the next sections we are going to explore compactness  and geometrical properties of the functional $I$ in $\X$. 
	
	We point out the remarkable fact that a nontrivial obstacle to a simple direct application of linking theorems is the lack of a lower estimate on  the principal part of the functional 
	\begin{equation*}\label{parteprin}
	\mathcal{A}(u):=\|\n u\|_2^2 +\frac12 V_{1,2}(u)
	\end{equation*} 
	by the $\X $-norm. 
	
	Indeed observe that, differently from what happens in \cite{CW,DW}, in order to study the behaviour of the functional we will need to compare the principal part of the functional not with the integral of a pure power, but with the competing term $V_{1,q}$, for which only an upper estimate  by $\X$-norm is provided (see formula \eqref{control 1}). 
	
	In order to overcome this difficulty and obtain an upper estimate of $V_{1,q}(u)$ by $\mathcal A(u)$, we observe that obviously $\X\hookrightarrow \H$, and exploit  the invariance of the weight with respect to the action of  the orthogonal group for restricting our investigation to the set $\X_r$ of radial functions in $\X$,  where the following well known Strauss Lemma \cite{S} holds
	\begin{lemma}\label{strauss}
		There exists $C> 0$  such that for all $u \in \Hr$,  there holds
		\[
		|u(x)| \le C|x|^{-\frac{1}2} \|u\|, 
		\]
		for all $|x|\ge 1$.
	\end{lemma}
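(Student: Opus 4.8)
The plan is to reduce the statement to smooth, compactly supported radial functions and then exploit the essentially one-dimensional nature of radial functions in the plane. Since $C_0^\infty(\RD)\cap\Hr$ is dense in $\Hr$ and the claimed inequality depends continuously on $\|u\|$ on the right-hand side, it suffices to establish the pointwise bound for radial $u\in C_0^\infty(\RD)$; the general case then follows by approximating an arbitrary $u\in\Hr$ in the $H^1$-norm and passing to the limit (a.e.\ along a subsequence on the left, while the right-hand side converges). Writing $u(x)=u(r)$ with $r=|x|$, the natural object to control is the quantity $r\,|u(r)|^2$, whose boundedness is exactly equivalent to the desired decay $|u(r)|\le C r^{-1/2}\|u\|$.

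First I would use the compact support of $u$ to write
\[
r\,|u(r)|^2=-\int_r^{+\infty}\frac{d}{ds}\bigl(s\,|u(s)|^2\bigr)\,ds=-\int_r^{+\infty}|u(s)|^2\,ds-2\int_r^{+\infty}s\,u(s)u'(s)\,ds,
\]
so that
\[
r\,|u(r)|^2\le\int_r^{+\infty}|u(s)|^2\,ds+2\int_r^{+\infty}s\,|u(s)|\,|u'(s)|\,ds.
\]
The second integral I would estimate by the Cauchy--Schwarz inequality with the weight $s$, bounding it by $2\bigl(\int_r^\infty s|u|^2\,ds\bigr)^{1/2}\bigl(\int_r^\infty s|u'|^2\,ds\bigr)^{1/2}$, which in polar coordinates equals $\tfrac{1}{\pi}\|u\|_2\|\n u\|_2$ after recalling $\|u\|_2^2=2\pi\int_0^\infty s|u(s)|^2\,ds$ and $\|\n u\|_2^2=2\pi\int_0^\infty s|u'(s)|^2\,ds$. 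The first integral is the genuinely delicate one, because $\int_r^\infty|u(s)|^2\,ds$ is \emph{unweighted} and hence not directly comparable to the planar $L^2$-norm.

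Here is the key point, and the step I expect to be the main obstacle: the missing weight $s$ is supplied precisely by the hypothesis $r\ge 1$. For $s\ge r\ge 1$ one has $|u(s)|^2\le s\,|u(s)|^2$, whence $\int_r^\infty|u(s)|^2\,ds\le\int_r^\infty s|u(s)|^2\,ds\le\tfrac{1}{2\pi}\|u\|_2^2$. This is exactly why the lemma is stated only for $|x|\ge 1$ and cannot hold uniformly near the origin, where radial $H^1$ functions in $\RD$ may blow up logarithmically. Combining the two estimates and using $2ab\le a^2+b^2$ yields $r\,|u(r)|^2\le\tfrac{1}{2\pi}\|u\|_2^2+\tfrac{1}{\pi}\|u\|_2\|\n u\|_2\le C\|u\|^2$ with a constant independent of $u$ and of $|x|\ge 1$; taking square roots gives the assertion. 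The only remaining care is the density step, for which I would note that a radial function in $\Hr$ admits a representative absolutely continuous in $r$ on $(0,+\infty)$, so the pointwise estimate transfers from the approximating sequence to $u$ itself.
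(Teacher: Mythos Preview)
Your argument is correct and yields the stated inequality. Note, however, that the paper does not give its own proof of this lemma: it is quoted as the ``well known Strauss Lemma'' with a reference to \cite{S}, so there is nothing to compare against beyond the standard approach, which is essentially the one you follow.

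One remark on the argument itself: the term you single out as ``genuinely delicate'' is in fact superfluous. In the identity
\[
r\,|u(r)|^2=-\int_r^{+\infty}|u(s)|^2\,ds-2\int_r^{+\infty}s\,u(s)u'(s)\,ds
\]
the first integral on the right appears with a \emph{negative} sign, so it may simply be discarded, giving directly
\[
r\,|u(r)|^2\le 2\int_r^{+\infty}s\,|u(s)|\,|u'(s)|\,ds\le \frac{1}{\pi}\|u\|_2\|\nabla u\|_2,
\]
valid for every $r>0$. Consequently your explanation that the restriction $|x|\ge 1$ is what ``supplies the missing weight $s$'' and is the reason the lemma fails near the origin is not accurate: the decay $|u(x)|\le C|x|^{-1/2}\|u\|$ actually holds for all $|x|>0$, and the bound $|x|^{-1/2}$ already dominates the possible logarithmic blow-up of planar radial $H^1$ functions at the origin. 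The restriction $|x|\ge 1$ in the paper is only what is needed later (in Lemma~\ref{embHr} and Proposition~\ref{cov1}), not a genuine limitation of the estimate. Your inclusion of the extra term is harmless for correctness, just unnecessary.
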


	As an immediate consequence we have the following

	\begin{lemma}\label{embHr}
		For all $\tau\in (2,+\infty)$,  there exists a constant $C$ such that for any $u\in \Hr$,
		\begin{equation*}
		\|u\|_{*,\tau}^\tau
		\le C\left(\|u\|_\tau^\tau+\|u\|^{\tau-2}\|u\|_2^2\right)\le C \|u\|^\tau.
		\end{equation*}
		As a consequence the space $\Hr$ is continuously embedded into $L^\tau_{\log(2+|\cdot|)}(\RD)$, for any $\tau\in (2,+\infty)$.

	\end{lemma}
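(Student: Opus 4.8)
The plan is to use the radial decay furnished by the Strauss Lemma (Lemma~\ref{strauss}) to compensate the growth of the logarithmic weight, after splitting the integral defining $\|u\|_{*,\tau}^\tau$ according to whether $|x|\le 1$ or $|x|\ge 1$. On the unit ball the weight is harmless: since $\log(2+|x|)\le \log 3$ there, that part of the integral is bounded by $\log 3\,\|u\|_\tau^\tau$, which is already of the desired form.

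The outer region is where the estimate is earned, and this is the crux of the argument. I would split the power as $|u(x)|^\tau=|u(x)|^{\tau-2}|u(x)|^2$ and apply Lemma~\ref{strauss} to the factor $|u(x)|^{\tau-2}$, so that for $|x|\ge 1$
\[
\log(2+|x|)\,|u(x)|^\tau \le C^{\tau-2}\|u\|^{\tau-2}\,\big(\log(2+|x|)\,|x|^{-\frac{\tau-2}{2}}\big)\,|u(x)|^2 .
\]
This isolates the $\|u\|^{\tau-2}$ factor and converts the problem of controlling the logarithmically weighted $\tau$-integral into that of controlling a weighted $L^2$ mass. The trade-off between the $\tau$-power and the radial decay is precisely the mechanism that makes the coercive weight absorbable.

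The key observation, and the only place where the hypothesis $\tau>2$ enters, is that the residual weight is bounded on $\{|x|\ge 1\}$: the function $r\mapsto \log(2+r)\,r^{-(\tau-2)/2}$ is continuous on $[1,+\infty)$ and tends to $0$ as $r\to+\infty$, because for $\tau>2$ the positive power of $r$ dominates the logarithm at infinity; hence it admits a finite maximum $M=M(\tau)$. Integrating then gives
\[
\int_{|x|\ge 1}\log(2+|x|)\,|u(x)|^\tau\,dx \le C^{\tau-2}M\,\|u\|^{\tau-2}\,\|u\|_2^2 ,
\]
and adding the two contributions yields the first inequality. The second inequality $C(\|u\|_\tau^\tau+\|u\|^{\tau-2}\|u\|_2^2)\le C\|u\|^\tau$ follows from the Sobolev embedding $\H\hookrightarrow L^\tau(\RD)$, valid for every $\tau\in[2,+\infty)$ in dimension two, which bounds $\|u\|_\tau^\tau$ by $C\|u\|^\tau$, together with the trivial bound $\|u\|_2\le\|u\|$. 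Finally, recalling that $\|\cdot\|_{*,\tau}=\|\cdot\|_{\log(2+|\cdot|),\tau}$, the chain $\|u\|_{*,\tau}^\tau\le C\|u\|^\tau$ is exactly the asserted continuous embedding $\Hr\hookrightarrow L^\tau_{\log(2+|\cdot|)}(\RD)$. Beyond the elementary fact that the power dominates the logarithm, the argument is essentially bookkeeping, so I do not anticipate any genuine obstacle.
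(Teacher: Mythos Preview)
Your proof is correct and follows essentially the same route as the paper's: split the integral at $|x|=1$, bound the inner part by $C\|u\|_\tau^\tau$, and on the outer part use Lemma~\ref{strauss} on the factor $|u|^{\tau-2}$ together with the boundedness of $\log(2+r)\,r^{-(\tau-2)/2}$ on $[1,+\infty)$ to get $C\|u\|^{\tau-2}\|u\|_2^2$. Your write-up is in fact more explicit about why the residual weight is bounded and about the final Sobolev step, but the argument is identical.
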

	
	\begin{proof}
		Let $u\in \Hr$. Then, by Lemma \ref{strauss}, 
		\begin{align*}
		\ird\log(2+|x|)|u|^\tau\, dx
		&\le  C\left(\int_{B_1} |u|^\tau\, dx+
		\|u\|^{\tau-2}\int_{\RD\setminus B_1} \frac{\log(2+|x|)}{|x|^{\frac{\tau-2}2}}|u|^2\, dx\right)\nonumber\\
		&\le C\left(\|u\|_\tau^\tau+\|u\|^{\tau-2}\|u\|_2^2\right)
		\le C \|u\|^\tau.
		\end{align*}
	\end{proof}

	\begin{proposition}\label{cov1}
		For every $u\in \Hr$ and $k> 2$, the following inequalities hold
		\begin{equation*}
		0\le V_{1,k}(u) \le   
		C\|u\|_k^k\big(\|u\|_k^k+\|u\|^{k-2}\|u\|_2^2\big)\le C\|u\|^{2k}.
		\end{equation*}
		As a consequence for every $u\in \Hr$ and $k> 2$ we have
		\begin{equation*}\label{eq:cont2}
		0\le V_{1,k}(u)\le C(\mathcal A(u)+1)^k.
		\end{equation*}
	\end{proposition}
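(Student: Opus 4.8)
The plan is to read off the first chain of inequalities directly from the estimates already established for radial functions, and then to extract the consequence by showing that the principal part $\mathcal A$ controls the full $H^1$-norm up to an additive constant.

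First I would start from the upper estimate \eqref{control 1} of Proposition \ref{levikx}, which holds for every $u\in\X$ and in particular for $u\in\Hr$, and reads $0\le V_{1,k}(u)\le C\|u\|_{*,k}^k\|u\|_k^k$. For radial $u$ the weighted quantity $\|u\|_{*,k}^k$ is controlled by Lemma \ref{embHr}, namely $\|u\|_{*,k}^k\le C(\|u\|_k^k+\|u\|^{k-2}\|u\|_2^2)$; substituting this bound gives the first inequality $V_{1,k}(u)\le C\|u\|_k^k(\|u\|_k^k+\|u\|^{k-2}\|u\|_2^2)$. To close the chain with $C\|u\|^{2k}$ I would invoke the two-dimensional Sobolev embedding $\Hr\hookrightarrow L^k(\RD)$ (equivalently, the last inequality of Lemma \ref{embHr} combined with $\log(2+|x|)\ge\log 2$, which yields $\|u\|_k^k\le C\|u\|^k$) together with $\|u\|_2^2\le\|u\|^2$; this makes both factors $O(\|u\|^k)$.

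The second assertion requires bounding $\|u\|^2$ by $\mathcal A(u)$ up to a constant. The key observation is that, since $\log(2+|x-y|)\ge\log 2>0$ pointwise, the quartic term satisfies $V_{1,2}(u)\ge(\log 2)\|u\|_2^4$, whence $\mathcal A(u)=\|\n u\|_2^2+\tfrac12 V_{1,2}(u)\ge\tfrac{\log 2}{2}\|u\|_2^4$, and therefore $\|u\|_2^2\le C\,\mathcal A(u)^{1/2}\le C(1+\mathcal A(u))$, using $\sqrt a\le 1+a$. Combining this with the trivial bound $\|\n u\|_2^2\le\mathcal A(u)$ gives $\|u\|^2=\|\n u\|_2^2+\|u\|_2^2\le C(1+\mathcal A(u))$. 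Feeding this into the inequality $V_{1,k}(u)\le C\|u\|^{2k}=C(\|u\|^2)^k$ just established yields $V_{1,k}(u)\le C(\mathcal A(u)+1)^k$, as claimed.

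The only genuinely non-routine point is the last step: the definition of $\mathcal A$ contains the \emph{quartic} nonlocal term $V_{1,2}$ rather than an $L^2$-norm, so one must exploit the strict positivity of the shifted logarithmic kernel to produce the lower bound $V_{1,2}(u)\ge(\log 2)\|u\|_2^4$ and then pass from this quartic control to a (sublinear) control of $\|u\|_2^2$, with the additive constant $1$ absorbing precisely that sublinearity. Everything else is an assembly of \eqref{control 1}, Lemma \ref{embHr}, and the standard Sobolev embedding of $\Hr$.
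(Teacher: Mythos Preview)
Your proof is correct and essentially identical to the paper's: both use the bound $V_{1,k}(u)\le 2\|u\|_k^k\|u\|_{*,k}^k$ together with Lemma~\ref{embHr} for the first chain, and the lower bound $V_{1,2}(u)\ge(\log 2)\|u\|_2^4$ (which is \eqref{eq:controlL2}) for the second. One minor wording issue: \eqref{control 1} is stated for $u\in\X$ and does not apply ``in particular'' to $u\in\Hr$, since $\Hr\not\subset\X$; the paper sidesteps this by re-deriving the pointwise inequality directly, which is legitimate as soon as Lemma~\ref{embHr} makes the right-hand side finite.
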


	\begin{proof}
		Set $u\in \Hr$. Then 
		%\color{blue}
		%			\begin{align*}
		%				V_{1,q}(u) &\le  2 \|u\|_q^q\ird\log(2+|x|)|u(x)|^q\, dx
		%\\
		%				&\le  C\|u\|_q^q\left(\int_{B_1} |u(x)|^q\, dx+
		%\|u\|^{q-2}\int_{\RD\setminus B_1} \frac{\log(2+|x|)}{|x|^{\frac{q-2}2}}|u|^2\, dx\right)\nonumber\\
		%				&\le C\|u\|_q^q(\|u\|_q^q+\|u\|^{q-2}\|u\|_2^2).
		%			\end{align*}
		\begin{align*}
		V_{1,k}(u) &
		\le  \ird\ird\big(\log(2+|x|)+\log(2+|y|)\big)|u(x)|^k|u(y)|^k\, dx\,dy=2 \|u\|_k^k\|u\|_{*,k}^k
		\end{align*}
		and we conclude by Lemma \ref{embHr}.\\
		The second part of the proposition follows from 
		the fact that
		\begin{equation}\label{eq:controlL2}
		V_{1,2}(u)\ge \log 2 \|u\|_2^4 \qquad\hbox{ for any $u\in\X$},
		\end{equation}
		and then $\|u\|_2^2\le C \sqrt{V_{1,2}(u)}\le C(V_{1,2}(u)+1).$
	\end{proof}

	\begin{remark}
		%Observe that symmetry property possessed by functions in $\Hr$ permits to replace estimate \eqref{eq:cont1} with \eqref{eq:cont2}  for $k>2$. This fact will  be crucial in the following both to allow us to define a minimax level for the functional $I$ restricted to $\X_r$, and to prove boundedness of a particular Cerami sequence that we will build in correspondence of such a minimax level. 
		
		We point out also that, by the Palais' Principle of Symmetric Criticality, in order to solve \eqref{eq} we will be allowed to look for critical points of the constrained functional $I_{|\X_r}$.
	\end{remark}

	The following technical lemmas will be useful in the sequel. 
	\begin{lemma}[Lemma 2.1 of \cite{CW}]\label{lemma2.1}
		Let $\{u_n\}$ be a sequence in $L^2(\mathbb{R}^2)$ such that $u_n \rightarrow u \in L^2(\mathbb{R}^2) \backslash\{0\}$ pointwise a.e. on $\mathbb{R}^2$. Moreover, let $\{v_n\}$ be a bounded sequence in $L^2(\mathbb{R}^2)$ such that
		%$$
		%\sup _{n \in \mathbb{N}} B_1\left(u_n^2, v_n^2\right)<\infty.
		%$$
		$$
		\sup _{n \in \mathbb{N}} \ird \ird \log(2+|x-y|)u_n^2(x) v_n^2(y)\, dx\,dy<+\infty.
		$$
		Then there exists $n_0 \in \mathbb{N}$ and $C>0$ such that $\|v_n\|_{*,2}<C$, for $n \ge n_0$.
		\\
		If, moreover,
		$$
		%B_1\left(u_n^2, v_n^2\right) 
		\ird \ird \log(2+|x-y|)u_n^2(x) v_n^2(y)\, dx\,dy\to 0 \quad\text { and }\quad \|v_n\|_2 \to 0 ,\quad\text { as } n \to +\infty,
		$$
		then
		$$
		\|v_n\|_{*,2} \to 0 \quad\text { as } n \to +\infty, \ n \ge n_0 .
		$$
	\end{lemma}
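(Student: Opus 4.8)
The plan is to exploit an elementary subadditivity estimate for the weight in order to transfer the unweighted $L^2$-control coming from the double-integral hypothesis into the weighted $\|\cdot\|_{*,2}$-control. First I would record the pointwise inequality
\[
\log(2+|y|)\le \log(2+|x|)+\log(2+|x-y|),\qquad \text{for all }x,y\in\RD,
\]
which follows from $2+|y|\le 2+|x|+|x-y|\le (2+|x|)(2+|x-y|)$. Multiplying by $v_n^2(y)$ and integrating in $y$ gives, for \emph{every} $x$,
\[
\|v_n\|_{*,2}^2\le \log(2+|x|)\,\|v_n\|_2^2+\ird \log(2+|x-y|)v_n^2(y)\,dy.
\]
The point of this step is that it converts the ``bad'' weight $\log(2+|y|)$ into a bounded factor $\log(2+|x|)$, once $x$ is confined to a bounded region, plus exactly the convolution kernel appearing in the hypothesis.

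Next I would select a good localising set. Since $u\in L^2(\RD)\setminus\{0\}$, there exist $\rho>0$ and a bounded measurable set $B$ with $0<\meas(B)<+\infty$ on which $|u|\ge\rho$, so that $\int_B u^2\,dx>0$. Because $u_n\to u$ pointwise a.e., Fatou's lemma yields $\liminf_n\int_B u_n^2\,dx\ge\int_B u^2\,dx>0$, hence there are $n_0$ and $\delta>0$ with $\int_B u_n^2\,dx\ge\delta$ for $n\ge n_0$. Now I would multiply the displayed inequality by $u_n^2(x)$, integrate over $x\in B$, and divide by $\int_B u_n^2\,dx$. Writing $M:=\max_{x\in B}\log(2+|x|)<+\infty$, the first term becomes a weighted average of $\log(2+|x|)$ over $B$ and is therefore bounded by $M\|v_n\|_2^2$, \emph{irrespective of the size of} $\int_B u_n^2$; the second term is controlled by
\[
\frac{1}{\delta}\int_B\ird \log(2+|x-y|)u_n^2(x)v_n^2(y)\,dy\,dx\le \frac{1}{\delta}\ird\ird \log(2+|x-y|)u_n^2(x)v_n^2(y)\,dx\,dy,
\]
since the integrand is nonnegative. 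This gives the master estimate $\|v_n\|_{*,2}^2\le M\|v_n\|_2^2+\frac{1}{\delta}K_n$ for $n\ge n_0$, where $K_n$ denotes the double integral appearing in the hypothesis.

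Both conclusions then follow immediately from the master estimate. For the first part, $\{v_n\}$ is bounded in $L^2(\RD)$ and $\sup_n K_n<+\infty$, so $\|v_n\|_{*,2}^2$ is bounded for $n\ge n_0$. For the second part, $\|v_n\|_2\to0$ and $K_n\to0$ force the right-hand side to zero, whence $\|v_n\|_{*,2}\to0$. The only genuinely delicate point, and the step I would flag as the main obstacle, is that $\{u_n\}$ is assumed merely to converge pointwise, so $\int_B u_n^2$ need not be bounded \emph{above}; the resolution is exactly to keep the first term in the weighted-average (ratio) form rather than splitting it off, so that the uncontrolled mass of $u_n$ cancels and only the lower bound $\delta$ is ever used.
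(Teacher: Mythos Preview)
The paper does not give its own proof of this lemma; it is quoted verbatim (up to the cosmetic change of kernel $\log(2+|\cdot|)$) from \cite{CW}. Your argument is correct and is essentially the proof in \cite{CW}: the same subadditivity bound $\log(2+|y|)\le\log(2+|x|)+\log(2+|x-y|)$, the same localisation on a bounded set where the limit $u$ has positive mass (with Fatou supplying the uniform lower bound $\int_B u_n^2\ge\delta$), and the same averaging trick to cancel the uncontrolled factor $\int_B u_n^2$ from the $\log(2+|x|)$ term.
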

	
	\begin{lemma}[Lemma 2.6 of \cite{CW}]\label{lemma2.6}
		Let $\{u_n\}, \{v_n\}, \{w_n\}$ be bounded sequences in $\X$ such that $u_n\weakto u$ weakly in $\X$. Then, for every $z\in \X$, we have 
		%$B_1\big(v_nw_n,z(u_n -u)\big)\to 0$, as $n \to +\infty$.
		\[
		\ird \ird \log(2+|x-y|)v_n(x)w_n(x)z(y) \big(u_n(y)-u(y)\big)\, dx\,dy\to 0 ,\quad\text { as } n \to +\infty.
		\]
	\end{lemma}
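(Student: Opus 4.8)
The plan is to estimate the modulus of the integral using the elementary kernel bound
\[
\log(2+|x-y|)\le \log(2+|x|)+\log(2+|y|),
\]
which follows at once from $2+|x-y|\le 2+|x|+|y|\le (2+|x|)(2+|y|)$. First I would record the consequences of the hypotheses: since $\{u_n\}$ is bounded in $\X$ and $u_n\weakto u$, the compact embedding of Proposition \ref{embX} yields $u_n\to u$ strongly in $L^2(\RD)$, while all three sequences stay bounded both in $L^2(\RD)$ and in the weighted norm $\|\cdot\|_{*,2}\le\|\cdot\|_{\X}$; in particular $\|u_n-u\|_{*,2}\le C$. Passing to absolute values inside the double integral and inserting the kernel estimate, the quantity to be controlled is dominated by $I_1+I_2$, where
\[
I_1:=\left(\ird \log(2+|x|)|v_n(x)||w_n(x)|\,dx\right)\left(\ird |z(y)||u_n(y)-u(y)|\,dy\right)
\]
and
\[
I_2:=\left(\ird |v_n(x)||w_n(x)|\,dx\right)\left(\ird \log(2+|y|)|z(y)||u_n(y)-u(y)|\,dy\right).
\]

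For $I_1$ the Cauchy--Schwarz inequality gives $\ird \log(2+|x|)|v_n||w_n|\,dx\le \|v_n\|_{*,2}\|w_n\|_{*,2}$, which is bounded, while $\ird|z||u_n-u|\,dx\le\|z\|_2\|u_n-u\|_2\to0$ by the strong $L^2$-convergence; hence $I_1\to0$.

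The delicate term is $I_2$, and this is where the main obstacle lies. Its first factor $\ird|v_n||w_n|\,dx\le\|v_n\|_2\|w_n\|_2$ is merely bounded, so one needs the second factor $\ird\log(2+|y|)|z||u_n-u|\,dy$ to vanish. This \emph{cannot} be deduced from boundedness of $\|u_n-u\|_{*,2}$, because $\X\hookrightarrow L^\tau_{\log(2+|\cdot|)}(\RD)$ fails to be compact (Counterexample \ref{contro}), so $u_n-u$ need not tend to $0$ in the weighted norm. The way around it is to exploit that $z$ is \emph{fixed}, splitting the $y$-integral over $B_R$ and $\RD\setminus B_R$. On $B_R$ the weight is bounded by $\log(2+R)$ and $\int_{B_R}|z||u_n-u|\,dy\le\|z\|_2\|u_n-u\|_{L^2(B_R)}\to0$ for each fixed $R$; on $\RD\setminus B_R$, the weighted Cauchy--Schwarz inequality gives the bound
\[
\left(\int_{\RD\setminus B_R}\log(2+|y|)|z|^2\,dy\right)^{1/2}\|u_n-u\|_{*,2},
\]
whose first factor is the tail of the convergent integral $\|z\|_{*,2}^2$, hence small for $R$ large uniformly in $n$, the second factor being bounded. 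A standard $\varepsilon$-argument (fix $R$ so the tail is $<\varepsilon$, then let $n\to\infty$ to kill the $B_R$-part) yields $I_2\to0$. Combining with $I_1\to0$ concludes the proof.
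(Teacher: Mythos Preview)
The paper does not supply its own proof of this lemma: it is quoted verbatim as ``Lemma 2.6 of \cite{CW}'' and used as a black box. Your argument is correct and is essentially the one given in \cite{CW}: the kernel splitting $\log(2+|x-y|)\le\log(2+|x|)+\log(2+|y|)$, Cauchy--Schwarz in the weighted $L^2$-space, the compact embedding $\X\hookrightarrow L^2(\RD)$ for the term carrying the weight on the $x$-side, and --- for the term with the weight on the $y$-side --- the ball/tail decomposition exploiting that $z$ is fixed so that $\int_{\RD\setminus B_R}\log(2+|y|)|z|^2\,dy\to0$ uniformly in $n$, while $\|u_n-u\|_{*,2}$ stays bounded.
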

	
	%
	%{\color{red}
	%\section{Proof}
	%
	%Let $\a \in \R$ and $s\in \R$. For any $u\in \X_r$,  we define $u_s(x):=e^{\a s}u (e^s x)$.\todo[inline]{vediamo se c'\`e un $\a $ che vada bene}
	%
	%We have
	%\begin{align*}
	%\g (s,u)&=I(u_s)=
	%\frac{e^{2\a s}}2 \|\n u\|^2_2 
	%+ \frac {e^{(2\a p-4)s}}{2p}\ird \ird \log (|x-y|)|u(x)|^p|u(y)|^p\, dx \, dy
	%\\
	%&\quad -\frac {se^{(2\a p-4)s}}{2p}\left(\ird |u|^p dx \right)^2
	%-\frac{e^{(2\a q-4)s}}{2q}\ird \ird \log (|x-y|)|u(x)|^q|u(y)|^q\, dx \, dy
	%\\
	%&\quad +\frac {se^{(2\a q-4)s}}{2q}\left(\ird |u|^q dx \right)^2
	%\end{align*}
	%We have
	%\begin{align*}
	%\partial_s \g (s,u)&=
	%\a e^{2\a s} \|\n u\|_2^2
	%+ \frac {(\a p-2)e^{(2\a p-4)s}}{p}\ird \ird \log (|x-y|)|u(x)|^p|u(y)|^p\, dx \, dy
	%\\
	%&\quad -\frac {(\a p-2)se^{(2\a p-4)s}}{p}\left(\ird |u|^p dx \right)^2
	%-\frac {e^{(2\a p-4)s}}{2p}\left(\ird |u|^p dx \right)^2
	%\\
	%&\quad-\frac{(\a q-2)e^{(2\a q-4)s}}{q}\ird \ird \log (|x-y|)|u(x)|^q|u(y)|^q\, dx \, dy
	%\\
	%&\quad +\frac {(\a q-2)se^{(2\a q-4)s}}{q}\left(\ird |u|^q dx \right)^2
	%+\frac {e^{(2\a q-4)s}}{2q}\left(\ird |u|^p dx \right)^2.
	%\end{align*}
	%Therefore
	%\begin{align*}
	%\partial_s\g (0,u)&=
	%\a \|\n u\|_2^2
	%+ \frac {\a p-2}{p}\ird \ird \log (|x-y|)|u(x)|^p|u(y)|^p\, dx \, dy
	%\\
	%&\quad -\frac {1}{2p}\left(\ird |u|^p dx \right)^2
	%-\frac{\a q-2}{q}\ird \ird \log (|x-y|)|u(x)|^q|u(y)|^q\, dx \, dy
	%\\
	%&\quad +\frac {1}{2q}\left(\ird |u|^q dx \right)^2.
	%\end{align*}
	%}
	
	\section{Proof of Theorem \ref{main}}\label{3}

In this section we assume that $q\in (8/3,4)$.
	
 In order to find critical minimax levels, we develop arguments based on the augmented dimension trick as described in \cite{HIT} and used for a related problem in \cite{A}.
		
		As a first step, we introduce the comparison $C^1$ functional $G:\X_r\to\R$, 
		\begin{equation*}
		G(u):=\|\nabla u\|_2^2
		+\frac{1}{2}V_{0,2}(u)
		-\frac{1}{q}V_{1,q}(u)
		\text{ for } u\in\X_r.
		\end{equation*}
		Observe that for all $u\in \X_r$  we have $G(u)\le I(u)$.
		
The functionals $I$ and $G$ satisfy the following geometrical properties.
		
		\begin{proposition}\label{geo}
			There exist $\rho>0$, $\mu > 0$ and, for any $n\ge 1$, an odd continuous mapping 
			\begin{equation*}
			\g_{0n}:S^{n-1}\to \X_r
			\end{equation*}
			where $S^{n-1}=\{\sigma\in\R^n\mid |\sigma|=1\}$ such that
			\begin{enumerate}
				\item \label{g1} ${I}(u)\ge G(u)\ge 0$ for all $u\in \X_r$ with $\|u\|\le\rho$ and, in particular,
				${I}(u)\ge G(u)\ge \mu$ for all $u\in \X_r$ with $\|u\|=\rho$;
				\item \label{g2}$G(\g_{0n}(\sigma))\le I(\g_{0n}(\sigma))<0,$  for all $\sigma\in S^{n-1}$.
			\end{enumerate}
		\end{proposition}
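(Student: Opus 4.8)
The plan is to verify both items for the smaller functional $G$ and then invoke the already observed inequality $G\le I$ on $\X_r$; for item \eqref{g2} this reduces the task to producing a map on which $I<0$, since $G\le I<0$ then follows for free. Throughout I would rewrite $G$ by means of the decomposition \eqref{Vkrel}, namely
\[
G(u)=\mathcal A(u)-\tfrac12 V_{2,2}(u)-\tfrac1q V_{1,q}(u),\qquad \mathcal A(u)=\|\n u\|_2^2+\tfrac12 V_{1,2}(u),
\]
so that the nonnegative principal part $\mathcal A$ is isolated and the two genuinely negative contributions are $V_{2,2}$ and the competing term $V_{1,q}$.

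For item \eqref{g1} I would work entirely with the $H^1$–norm, which is legitimate because $\X_r\hookrightarrow\Hr$ and so the radial estimate of Proposition \ref{cov1} applies, giving $V_{1,q}(u)\le C\|u\|^{2q}$. The term $V_{2,2}$ is controlled by \eqref{control 2} together with the Gagliardo--Nirenberg inequality in $\RD$, which yields $V_{2,2}(u)\le C\|u\|_{8/3}^4\le C\|\n u\|_2\|u\|_2^3$; note the factor $\|\n u\|_2$, which renders this term harmless when the gradient is small. For the positive part I would keep $\|\n u\|_2^2$ and use \eqref{eq:controlL2}, i.e. $V_{1,2}(u)\ge \log 2\,\|u\|_2^4$, to retain a quartic control of the $L^2$–norm. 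Applying Young's inequality to absorb $\|\n u\|_2\|u\|_2^3$ into $\tfrac12\|\n u\|_2^2$ plus $\|u\|_2^6$, and using $2q>4$ to treat $V_{1,q}$ as a higher order term, I expect, for $\rho$ small enough and $\|u\|\le\rho$,
\[
G(u)\ge \tfrac14\|\n u\|_2^2+\tfrac{\log 2}{4}\|u\|_2^4\ge0 .
\]
On the sphere $\|u\|=\rho$ I would then split according to whether $\|\n u\|_2^2\ge\rho^2/2$ or $\|u\|_2^2\ge\rho^2/2$: in the first case the gradient term gives $G\ge\rho^2/8$, in the second the quartic term gives $G\ge(\log 2)\rho^4/16$, so that $\mu:=\min\{\rho^2/8,(\log 2)\rho^4/16\}>0$ works. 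Since $I\ge G$ this proves \eqref{g1}.

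For item \eqref{g2} I would realise the augmented--dimension idea of \cite{HIT} through a dilation. Fix $n$ radial functions $\phi_1,\dots,\phi_n\in C_0^\infty(\RD)$ with pairwise disjoint (annular) supports and $\|\phi_i\|_2=1$; because $q>2$, choosing each $\phi_i$ supported on a thin shell makes $\|\phi_i\|_q$ as large as we please. Setting $w_\sigma:=\sum_i\sigma_i\phi_i$ for $\sigma\in S^{n-1}$, disjointness of supports gives $\|w_\sigma\|_2^2=\sum_i\sigma_i^2=1$ and $\|w_\sigma\|_q^q=\sum_i|\sigma_i|^q\|\phi_i\|_q^q$, while $\sum_i|\sigma_i|^q\ge n^{1-q/2}$ on $S^{n-1}$; hence, taking the $\|\phi_i\|_q$ large enough (depending on $n$), one arranges
\[
\tfrac12\|w_\sigma\|_2^4-\tfrac1q\|w_\sigma\|_q^{2q}\le-\delta<0\qquad\text{for all }\sigma\in S^{n-1}.
\]
I would then define $\gamma_{0n}(\sigma):=w_\sigma(\cdot/\lambda_n)$, which is odd, continuous, and takes values in $\X_r$. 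Using the scaling identity $V_{0,k}\big(u(\cdot/\lambda)\big)=\lambda^4\log\lambda\,\|u\|_k^{2k}+\lambda^4V_{0,k}(u)$ (which comes from the change of variables $x=\lambda a$ and $\log(\lambda|a-b|)=\log\lambda+\log|a-b|$, the Dirichlet energy being dilation invariant in $\RD$), one obtains
\[
I\big(\gamma_{0n}(\sigma)\big)=\|\n w_\sigma\|_2^2+\lambda_n^4\log\lambda_n\Big(\tfrac12\|w_\sigma\|_2^4-\tfrac1q\|w_\sigma\|_q^{2q}\Big)+\lambda_n^4\Big(\tfrac12V_{0,2}(w_\sigma)-\tfrac1q V_{0,q}(w_\sigma)\Big).
\]
Since all the $\sigma$–dependent quantities are continuous on the compact set $S^{n-1}$, the bracket multiplying $\lambda_n^4\log\lambda_n$ stays below $-\delta$, so for $\lambda_n$ large enough $I(\gamma_{0n}(\sigma))<0$ uniformly in $\sigma$; together with $G\le I$ this gives \eqref{g2}.

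The main obstacle is item \eqref{g1}: because the principal part $\mathcal A$ is not coercive with respect to $\|\cdot\|_{\X}$, one cannot extract positivity from a single homogeneous quantity and must instead combine the quadratic gradient term with the quartic lower bound $V_{1,2}(u)\gtrsim\|u\|_2^4$, checking that both the order-$2q$ competing term $V_{1,q}$ and the quartic term $V_{2,2}$ are absorbed for small $H^1$–norm. It is precisely here that radial symmetry is indispensable, since only on $\Hr$ (through Lemma \ref{strauss} and Proposition \ref{cov1}) is $V_{1,q}$ controlled by the $H^1$–norm rather than merely by the $\X$–norm. In item \eqref{g2} the only delicate point is the uniform, over $S^{n-1}$, sign of the $L^2$–versus–$L^q$ balance, which is secured by the disjoint--support normalisation together with the concentration of the profiles.
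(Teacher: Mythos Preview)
Your argument is correct. Item \eqref{g1} is essentially identical to the paper's proof: both combine the bound $V_{2,2}(u)\le C\|\nabla u\|_2\|u\|_2^3$ (via \eqref{control 2} and Gagliardo--Nirenberg), the lower bound \eqref{eq:controlL2}, and the radial estimate $V_{1,q}(u)\le C\|u\|^{2q}$ from Proposition \ref{cov1}, and then observe that for small $H^1$--norm the quadratic--plus--quartic positive part dominates the order-$6$ and order-$2q$ remainders. Your case-splitting on the sphere is a harmless variant of the paper's more compact lower bound $G(u)\ge c\|u\|^4-c'\|u\|^6-c''\|u\|^{2q}$.

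For item \eqref{g2} the approaches diverge a bit. The paper takes \emph{any} linearly independent $u_1,\dots,u_n\in\X_r$, sets $u_\sigma=\sum_i\sigma_iu_i$, and uses the amplitude-plus-dilation scaling $u_{\sigma t}:=t\,u_\sigma(\cdot/t)$. Under this scaling the terms acquire different homogeneities, $t^2$, $t^8\log t$, $t^8$, $t^{2q+4}\log t$, $t^{2q+4}$, so that the negative term $-\tfrac{1}{q}t^{2q+4}\log t\,\|u_\sigma\|_q^{2q}$ automatically dominates (since $2q+4>8$) without any preliminary balancing. You instead use pure dilation $w_\sigma(\cdot/\lambda)$, under which the $V_{0,2}$ and $V_{0,q}$ pieces both scale like $\lambda^4\log\lambda$; this forces you to first engineer, via disjoint annular supports and concentration, the uniform inequality $\tfrac12\|w_\sigma\|_2^4-\tfrac1q\|w_\sigma\|_q^{2q}\le-\delta$. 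Both routes are valid; the paper's scaling buys you freedom in the choice of the basis (no disjoint supports, no concentration), while your route keeps the Dirichlet term exactly constant and makes the asymptotics especially transparent.
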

		
		\begin{proof}
			Since, by \eqref{control 2}, Gagliardo-Nirenberg inequality, and Young inequality,
			\[
			V_{2,2}(u)
			\le C \|u\|_{\frac{8}{3}}^{4}
			\le  C\|\n u\|_2\|u\|_2^3
			\le \e \|\n u\|_2^{2}+ C_\e\|u\|_2^{6},
			\]
			using Proposition \ref{cov1} and \eqref{eq:controlL2},  we have 
	
			\begin{equation*}
			G(u)\ge c_1\|\n u\|_2^2+c_2\|u\|_2^{4}
			- \frac{C_\e}{2}\|u\|_2^{6}
			-C\|u\|^{2q}.
			\end{equation*}
			Therefore, if $\rho$ is sufficiently small,  
			\[
			G(u)\ge c_1\|u\|^{4}
			- c_2\|u\|^{6}
			-c_3\|u\|^{2q}
			\]
			and we get (\ref{g1}).\\
			As to the second geometrical property, we consider $n\ge 1$ and  $u_1,\ldots, u_n\in \X_r$ linearly independent and for any $\sigma=(\sigma_1,\ldots,\sigma_n) \in S^{n-1}$ we set $$u_\sigma:=\sum_{i=1}^n\sigma_iu_i.$$ 
			Moreover, we define
				\begin{equation*}
					\begin{array}{ll}
						\displaystyle M_1:=\max_{\sigma\in S^{n-1}} \|\n u_\sigma\|_2^2,\quad
						&\displaystyle M_2:=\max_{\sigma\in S^{n-1}} \big(\|u_\sigma\|_2^4 + |V_{0,2}(u_\s)| \big),\vspace{9pt}\\
						\displaystyle m_3:=\min_{\sigma\in S^{n-1}} \|u_\s\|_q^{2q},\quad
						&\displaystyle M_4:=\displaystyle\max_{\sigma\in S^{n-1}} |V_{0,q}(u_\s)|
					\end{array}
					\end{equation*}
%			{\color{red}\begin{equation*}
%			\begin{array}{lll}
%			&M_1:=\displaystyle\max_{\sigma\in S^{n-1}}\ird |\n u_\sigma|^2\, dx,\quad &M_2=\displaystyle \max_{\sigma\in S^{n-1}}\left(\ird u_\sigma^2\, dx\right)^2 + |V_{0,2}(u_\s)|,\\
%			&\displaystyle m_3=\min_{\sigma\in S^{n-1}}\left(\ird |u_\s|^q\,dx\right)^2,\quad &M_4=\displaystyle\max_{\sigma\in S^{n-1}} |V_{0,q}(u_\s)|
%			\end{array}
%			\end{equation*}}
			and,
			for $t>0$, $u_{\sigma t}:=tu_\sigma(\cdot/t)$.\\
			If $t>1$, we have
		\begin{align*}
			I(u_{\sigma t})
			&=
			t^2\|\n u_\sigma\|_2^2
			+  \frac{t^8\log t}{2} \|u_\sigma\|_2^4 +\frac{t^8}{2} V_{0,2}(u_\sigma)
			- \frac{t^{2q+4}\log t}{q} \|u_\sigma\|_q^{2q}- \frac{t^{2q+4}}{q} V_{0,q}(u_\sigma) \\
			&\le M_1t^2 + \frac {M_2}2t^8(\log t +1) 
			+ \frac{t^{2q+4}}{q}\big(M_4-m_3 \log t\big).
			\end{align*}
%			{\color{red}\begin{align*}
%			I(u_{\sigma t})&= t^2\|\n u_\sigma\|_2^2 +  \frac{t^8\log t}{2}\left(\ird u_\sigma^2\, dx \right)^2 +\frac{t^8}{2} V_{0,2}(u_\sigma) \\
%			&\quad - \frac{t^{2q+4}\log t}{q}\left(\ird |u_\sigma|^q\, dx\right)^2- \frac{t^{2q+4}}{q} V_{0,q}(u_\sigma) \\
%			&\le M_1t^2 + \frac {M_2}2t^8(\log t +1)  - \frac{m_3}q t^{2q+4}\log t  + \frac{M_4}{q}t^{2q+4}
%			\end{align*}}
			Then there exists $\bar t>1$ such that $\sup_{\s\in S^{n-1}}I(u_{\s\bar t})<0$.\\ 
			We just define 
			\begin{equation*}
			\g_{0n}:\s\in S^{n-1}\mapsto u_{\s\bar t}\in \X_r
			\end{equation*}
			and conclude.
		\end{proof}

		Now, in a similar way as in \cite{A2},  we give the following definition
		\begin{definition}
			Suppose that $(E,\|\cdot\|_E)$ and
			$(F,\|\cdot\|_F)$ are two Banach spaces such that $(E,\|\cdot\|_E)\hookrightarrow (F,\|\cdot\|_F)$.
			A functional $I\in C^1(E,\R)$ satisfies a weak Palais-Smale (resp. Cerami) condition with respect to $E$ and $F$ in the interval $H\subset \R$
			if for any sequence $\{x_n\}$ in $E$  such that
%			\todo[inline]{Uniformare notazione successioni.}
			\begin{itemize}
				\item[$1.$] $\{I(x_n)\}$ is bounded in $H$,
				\item[$2.$] $I'(x_n)\to 0$ in $E'$ (resp. $\|I'(x_n)\|(1+\|x_n\|_E)\to 0$),
				\item[$3.$] $\{x_n\}$ is bounded in $F$,
			\end{itemize}
			there exists a converging subsequence (in the topology of
			$E$).
		\end{definition}

	In our case we have

	\begin{lemma}\label{WC}
The functional $I$ satisfies a weak Cerami condition with respect to $\X_r$ and $\Hr$ in $(-\infty,-\delta)$ and in $(\delta,+\infty)$ for all $\d>0$.
	\end{lemma}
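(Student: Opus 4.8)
The plan is to show that the three hypotheses force a strongly convergent subsequence in $\X_r$. Fix a sequence $\{u_n\}\subset\X_r$ with $\{I(u_n)\}$ bounded and contained in $H$, with $\|I'(u_n)\|_{\X_r'}(1+\|u_n\|_{\X})\to0$, and with $\{u_n\}$ bounded in $\Hr$; I treat $H=(\delta,+\infty)$, the case $H=(-\infty,-\delta)$ being analogous. Since $\{u_n\}$ is bounded in $\Hr$, up to a subsequence $u_n\weakto u$ in $\Hr$, $u_n\to u$ in $L^\tau(\RD)$ for every $\tau>2$, and $u_n\to u$ a.e.\ by radial compactness. The $\Hr$-bound already tames the competing nonlocal term: by Proposition \ref{cov1} (here $q>2$) the quantity $V_{1,q}(u_n)$ is bounded, while by \eqref{control 2} both $V_{2,q}(u_n)\le C\|u_n\|_{4q/3}^{2q}$ and $V_{2,2}(u_n)\le C\|u_n\|_{8/3}^{4}$ are bounded; hence $V_{0,q}(u_n)=V_{1,q}(u_n)-V_{2,q}(u_n)$ is bounded.

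First I would upgrade the $\Hr$-bound to an $\X_r$-bound. The Cerami hypothesis yields $I'(u_n)[u_n]\to0$, and a direct computation gives
\[
2I(u_n)-I'(u_n)[u_n]=-V_{0,2}(u_n)+\Big(2-\tfrac2q\Big)V_{0,q}(u_n).
\]
As the left-hand side is bounded and $V_{0,q}(u_n)$ is bounded, $V_{0,2}(u_n)$ is bounded, and so is $V_{1,2}(u_n)=V_{0,2}(u_n)+V_{2,2}(u_n)$. To convert this into a bound on $\|u_n\|_{*,2}$ I apply Lemma \ref{lemma2.1} with $v_n=u_n$, which needs the weak limit to be nontrivial; this is precisely where the interval $H$ enters. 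Indeed, if $u=0$ then $u_n\to0$ in every $L^\tau$, $\tau>2$, so $V_{0,q}(u_n)\to0$ and $V_{2,2}(u_n)\to0$; inserting this into $I'(u_n)[u_n]\to0$ gives $\|\n u_n\|_2^2+V_{0,2}(u_n)\to0$, and since $V_{0,2}(u_n)\ge-V_{2,2}(u_n)=o(1)$ (because $V_{1,2}\ge0$) both summands vanish, whence $I(u_n)\to0$, contradicting $I(u_n)>\delta$. Therefore $u\ne0$, Lemma \ref{lemma2.1} applies, $\|u_n\|_{*,2}$ is bounded, and $\{u_n\}$ is bounded in $\X_r$. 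Consequently $u_n\weakto u$ in $\X_r$, and by the compactness in Proposition \ref{embX} I may now also assume $u_n\to u$ in $L^\tau(\RD)$ for all $\tau\ge2$; in particular $\|u_n-u\|_2\to0$.

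It remains to prove strong convergence, and this is the hard part. Since $\|I'(u_n)\|_{\X_r'}\to0$ and $\{u_n-u\}$ is bounded in $\X_r$, I have $I'(u_n)[u_n-u]\to0$, that is,
\[
o(1)=2\ird\n u_n\cdot\n(u_n-u)\,dx+2\ird w_2\,u_n(u_n-u)\,dx-2\ird w_q\,|u_n|^{q-2}u_n(u_n-u)\,dx,
\]
where $w_k:=\log|\cdot|*|u_n|^k$. The competing ($w_q$) term tends to $0$: splitting $\log|x-y|=\log(2+|x-y|)-\log(1+2/|x-y|)$, bounding $\log(2+|x-y|)\le\log(2+|x|)+\log(2+|y|)$ on the first piece and using Hardy--Littlewood--Sobolev on the second, every factor containing $u_n-u$ vanishes thanks to Lemma \ref{embHr} and the strong $L^\tau$ convergence (now available for $\tau=2$ as well). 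The gradient term equals $\|\n(u_n-u)\|_2^2+o(1)$ by weak convergence. Performing the same splitting on $w_2$, the $\log(1+2/|\cdot|)$ piece is controlled by Hardy--Littlewood--Sobolev and vanishes by strong $L^{8/3}$ convergence, while in the $\log(2+|\cdot|)$ piece I write $u_n(u_n-u)=(u_n-u)^2+u(u_n-u)$; the cross term $\ird\ird\log(2+|x-y|)u_n^2(y)u(x)(u_n(x)-u(x))\,dx\,dy$ vanishes by Lemma \ref{lemma2.6} (with $v_n=w_n=u_n$ and $z=u$). What survives is
\[
o(1)=2\|\n(u_n-u)\|_2^2+2\ird\ird\log(2+|x-y|)u_n^2(y)(u_n-u)^2(x)\,dx\,dy,
\]
a sum of two nonnegative quantities, each of which must therefore tend to $0$. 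The first yields $\|\n(u_n-u)\|_2\to0$; for the second I invoke the second part of Lemma \ref{lemma2.1}, with $u_n\to u\ne0$ a.e.\ and $v_n=u_n-u$: since that double integral vanishes and $\|u_n-u\|_2\to0$, I obtain $\|u_n-u\|_{*,2}\to0$. Hence $\|u_n-u\|_{\X}^2=\|\n(u_n-u)\|_2^2+\|u_n-u\|_{*,2}^2\to0$, the required strong convergence. The crux of the whole argument is this recovery of the weighted part of the norm, which relies on the nontriviality of the weak limit (forced by $H$), on the compact embedding $\X_r\hookrightarrow L^2(\RD)$ that becomes available only after the $\X_r$-bound is secured, and on the strict positivity of the kernel $\log(2+|\cdot|)$ that renders the leftover quadratic term tractable via Lemma \ref{lemma2.1}.
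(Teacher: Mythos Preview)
Your proof is correct and follows essentially the same route as the paper's: show $u\ne0$ by contradiction with $|I(u_n)|\ge\delta$, use Lemma~\ref{lemma2.1} to upgrade the $\Hr$-bound to an $\X_r$-bound, then test $I'(u_n)$ against $u_n-u$, control the $V_{2,k}'$ and $V_{1,q}'$ pieces, and recover $\|u_n-u\|_{*,2}\to0$ from the leftover nonnegative double integral via the second part of Lemma~\ref{lemma2.1}. The only cosmetic differences are that you extract the bound on $V_{1,2}(u_n)$ from the combination $2I(u_n)-I'(u_n)[u_n]$ (the paper reads it off directly from $I'(u_n)[u_n]=o(1)$), and you dispatch $V_{1,q}'(u_n)[u_n-u]$ by appealing to Lemma~\ref{embHr} for $u_n-u$ (the paper spells out a Strauss-type estimate); both routes are valid.
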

	\begin{proof}
	Assume that $\{u_n\}$ is a sequence in $\X_r$ such that
			\begin{enumerate}[label=\arabic*.]
				\item $\{I(u_n)\}$ is bounded either in $(-\infty,-\d)$ or in $(\d,+\infty)$,
				\item $\|I'(u_n)\|_{\X_r'}(1+\|u_n\|_{\X_r})\to 0$,
				\item $\{u_n\}$ is bounded in $\Hr$.
			\end{enumerate}
Therefore, up to a subsequence, there exists $u\in \Hr$ such that $u_n\rightharpoonup u$ weakly in $\Hr$. 
We claim that $u\neq 0$.
\\
			Suppose by contradiction that, up to a subsequence, $u_n\rightharpoonup 0$ in $\Hr$. By compact embedding  $u_n \to 0$ in $L^\tau(\RD)$, for any $\tau>2$, as $n \to +\infty$. So, by \eqref{control 2},  $V_{2,2}(u_n) \to 0$ and $V_{2,q}(u_n) \to 0$, while, by Proposition \ref{cov1} also $V_{1,q}(u_n)\to 0$, as $n \to +\infty$.
			\\
			Therefore we get
			\[
			\|\n u_n\|^2_2+V_{1,2}(u_n) =\frac 12 I'(u_n)[u_n]+V_{2,2}(u_n)+ V_{1,q}(u_n) -V_{2,q}(u_n) 
			=o_n(1). 
			\]
			This implies that $I(u_n)\to 0$ and we get a contradiction with the fact that $|I(u_n)|\ge\d >0$, as $n \to +\infty$.
		\\
		Hence we can find a nontrivial $u\in \Hr$ such that $u_n \weakto u$ weakly in $\H$ and $u_n \to u$  pointwise almost everywhere. 
		By Proposition \ref{levikx} and Proposition \ref{cov1}, we infer that $\{V_{2,2}(u_n)\}, \{V_{1,q}(u_n)\}$, and $\{V_{2,q}(u_n)\}$ are bounded, and so, since $I'(u_n)[u_n]=o_n(1)$, we deduce that also  $\{V_{1,2}(u_n)\}$ is bounded.
		\\
		Applying Lemma \ref{lemma2.1} with $\{v_n\}=\{u_n\}$, there exists $C>0$ such that $\|u_n\|_{*,2}\le C$ and so $\{u_n\}$ is bounded in $\X_r$. By Proposition \ref{embX}, we deduce that $u_n \to u$  in $L^\tau(\RD)$, for all $\tau\ge 2$.\\
		Therefore, for $k\ge 2$,  by Hardy-Littlewood-Sobolev inequality,
		\begin{align*}
		|V_{2,k}'(u_n)[u_n -u]|&\le 
		C \ird \ird \frac 1{|x-y|}|u_n(x)|^k|u_n(y)|^{k-1}|u_n(y)-u(y)|\,dx\,dy
		\\
		&\le C\|u_n\|_{\frac{4k}3}^{2k-1}\|u_n-u\|_{\frac{4k}3}=o_n(1).
		\end{align*}
		Moreover, using also Lemma \ref{strauss},
		\begin{align*}
		&|V_{1,q}'(u_n)[u_n -u]|
		\le C\ird \ird \log(2+|x-y|)|u_n(x)|^q |u_n(y)|^{q-1}\big|u_n(y)-u(y)\big|\,dx\,dy
		\\
		&\le C\ird \ird \log(2+|x|)|u_n(x)|^q |u_n(y)|^{q-1}\big|u_n(y)-u(y)\big|\,dx\,dy
		\\
		&\quad+ C\ird \ird \log(2+|y|)|u_n(x)|^q |u_n(y)|^{q-1}\big|u_n(y)-u(y)\big|\,dx\,dy
		\\
		&= C\|u_n\|_{*,q}^q   \ird |u_n(y)|^{q-1}\big|u_n(y)-u(y)\big|\,dy
		\\
		&\quad+ C\|u_n\|_q^{q} \ird \log(2+|y|) |u_n(y)|^{q-1}\big|u_n(y)-u(y)\big|\,dy
		\\
		&\le C\|u_n\|^q\|u_n\|_q^{q-1}\|u_n-u\|_q
		\\
		&\quad
		+ C\|u_n\|_q^{q}\left( \ird \big(\log(2+|y|)\big)^{\frac q{q-1}} |u_n(y)|^{q}\,dy\right)^{\frac{q-1}q}\|u_n-u\|_q
		\\
		&\le o_n(1)
		+C\left(\! \int_{B_1}  \!|u_n(y)|^{q}\,dy+ \|u_n \|^{q-2}\!\int_{B_1^c} \!\frac{\big(\log(2+|y|)\big)^{\frac q{q-1}}}{|y|^{\frac{q-2}2}} |u_n(y)|^{2}\,dy\right)^{\frac{q-1}q}\!\!\!\!\|u_n-u\|_q.
		\\
		&=o_n(1).
		\end{align*}
		On the other hand, by Lemma \ref{lemma2.6}, we have 
		\begin{align*}
		V_{1,2}'(u_n)[u_n -u]& 
		= 4\ird \ird \log(2+|x-y|)u_n^2(x) u_n(y)\big(u_n(y)-u(y)\big)\,dx\,dy
		\\
		&= 4\ird \ird \log(2+|x-y|)u_n^2(x) \big(u_n(y)-u(y)\big)^2\,dx\,dy
		\\
		&\quad+ 4\ird \ird \log(2+|x-y|)u_n^2(x) u(y)\big(u_n(y)-u(y)\big)\,dx\,dy
		\\
		&\ge  4\ird \ird \log(2+|x-y|)u_n^2(x) u(y)\big(u_n(y)-u(y)\big)\,dx\,dy=o_n(1).
		\end{align*}
		Observe, in addition, that, since $\{u_n\}$ is bounded in $\X$,
		\begin{equation*}
		|I'(u_n)[u_n -u]|
		\le \|I'(u_n)\|_{\X'}\|u_n-u\|_{\X}=o_n(1).
		\end{equation*}
		Therefore, putting all these informations together, we get
		\begin{align*}
		o_n(1)=I'(u_n)[u_n -u]
		\ge \|\n u_n\|_2^2-\|\n u\|_2^2+o_n(1)\ge o_n(1),
		\end{align*}
		so $u_n \to u$ strongly in $\H$ and since %facendo calcoli come sopra
		\begin{align*}
		o_n(1)=I'(u_n)[u_n -u]
		= 2\ird \ird \log(2+|x-y|)u_n^2(x) \big(u_n(y)-u(y)\big)^2\,dx\,dy+o_n(1),
		\end{align*}
		using once again Lemma \ref{lemma2.1}, we infer that
		\[
		\|u_n-u\|_{*,2}=o_n(1).
		\]
		This implies that $u_n \to u$ strongly in $\X_r$.		
	\end{proof}

		\begin{proposition}\label{pr:C}
			The functional $G$ satisfies the Cerami condition  in $(\d,+\infty)$ for every $\d>0$.
		\end{proposition}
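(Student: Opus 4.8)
$G$ satisfies the Cerami condition in $(\delta,+\infty)$ for every $\delta>0$.

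The plan is to take a Cerami sequence $\{u_n\}$ for $G$ at a level contained in $(\delta,+\infty)$ and to show it admits a strongly convergent subsequence in $\X_r$. The crucial preliminary issue, different from the weak Cerami condition already established in Lemma \ref{WC}, is that here I do not get the $\Hr$-boundedness for free as a hypothesis; it must be \emph{derived} from the Cerami structure of $G$. So the first and main step is to prove that a Cerami sequence for $G$ is bounded in $\Hr$. This is precisely where the technical restriction $q\in(8/3,4)$ will be used (the text itself points to Lemma \ref{lebound}, which presumably supplies this boundedness), and I expect it to be the heart of the argument. The idea is to combine $G(u_n)=c+o_n(1)$ with the Cerami relation $G'(u_n)[u_n]=o_n(1)$, forming a suitable linear combination $G(u_n)-\tfrac{1}{\theta}G'(u_n)[u_n]$ for a well-chosen $\theta$, so that the dangerous competing term $V_{1,q}$ and the repulsive $V_{2,q}$ partly cancel. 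Using Proposition \ref{cov1} to bound $V_{1,q}(u_n)$ by $(\mathcal{A}(u_n)+1)^q$ and estimate \eqref{control 2} together with Gagliardo-Nirenberg to control $V_{2,2}$, one extracts a coercivity estimate of the form $\|\nabla u_n\|_2^2 + \|u_n\|_2^2 \le C$, where the scaling exponents (powers $2$, $8$, $2q+4$ appearing in the proof of Proposition \ref{geo}) force the constraint $q<4$ to keep the principal part dominant; the lower bound $q>8/3$ should enter in guaranteeing that $V_{1,q}$ does not overwhelm $V_{1,2}$ in the relevant balance.

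Once $\Hr$-boundedness is secured, the argument should reduce, essentially verbatim, to the mechanism of Lemma \ref{WC}. Concretely, having a bounded Cerami sequence in $\Hr$ that satisfies conditions $1$--$3$ of the weak Cerami definition relative to $\X_r$ and $\Hr$, I would invoke the fact that $G(u)\le I(u)$ only for the geometry; for the compactness I re-run the $u\neq 0$ argument and the strong-convergence argument. Since $G$ lacks the two terms $-\tfrac{1}{2}V_{2,2}$ and $+\tfrac{1}{q}V_{2,q}$ present in $I$ and replaces $\tfrac{1}{2}V_{0,2}$ with its decomposition, the estimates on $V_{1,q}'(u_n)[u_n-u]$ (via Lemma \ref{strauss} and Lemma \ref{embHr}) and on $V_{1,2}'(u_n)[u_n-u]$ (via Lemma \ref{lemma2.6} and the sign trick using $(u_n-u)^2\ge 0$) carry over unchanged. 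Passing to a weak limit $u$ in $\Hr$, pointwise a.e.\ convergence, and then Lemma \ref{lemma2.1} to upgrade boundedness of $V_{1,2}(u_n)$ into boundedness in $\X_r$, I obtain $u_n\to u$ in $L^\tau(\RD)$ for all $\tau\ge 2$ by Proposition \ref{embX}.

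The final step is the strong convergence in $\X_r$ itself. Writing out $G'(u_n)[u_n-u]=o_n(1)$ and discarding the nonnegative quadratic term $2\ird\ird\log(2+|x-y|)u_n^2(x)(u_n(y)-u(y))^2\,dx\,dy$ via the sign argument of Lemma \ref{WC}, I first extract $\|\nabla u_n\|_2\to\|\nabla u\|_2$, hence strong convergence of gradients; then, feeding this back, the nonnegative logarithmic quadratic term must vanish in the limit, and a last application of Lemma \ref{lemma2.1} gives $\|u_n-u\|_{*,2}\to 0$. Combining the gradient convergence with the $\|\cdot\|_{*,2}$ convergence yields $u_n\to u$ in $\X_r$, which is the desired Cerami condition. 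I anticipate that essentially all the novelty and difficulty is concentrated in the $\Hr$-boundedness step; the remainder is a careful but routine transcription of Lemma \ref{WC} with the simpler functional $G$ in place of $I$.
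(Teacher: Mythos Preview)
Your overall architecture is correct and matches the paper: first establish $\Hr$-boundedness of a Cerami sequence for $G$, then observe that the weak Cerami argument of Lemma \ref{WC} transfers verbatim to $G$ (simply drop all estimates involving $V_{2,q}$, since $G$ does not contain that term). The second half of your plan is fine.

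The boundedness step, however, is where your proposal is imprecise and partly off-target. You speak of cancelling ``the dangerous competing term $V_{1,q}$ and the repulsive $V_{2,q}$'', but $V_{2,q}$ does not appear in $G$ at all, so there is nothing to cancel. You also propose to control $V_{1,q}(u_n)$ via Proposition \ref{cov1} by $(\mathcal A(u_n)+1)^q$; this is circular, since $\mathcal A(u_n)=\|\nabla u_n\|_2^2+\tfrac12 V_{1,2}(u_n)$ is precisely one of the quantities whose boundedness is unknown. The paper instead exploits the \emph{homogeneities} of the three pieces of $G$ (degrees $2$, $4$, $2q$) by taking two specific linear combinations:
\[
G(u_n)-\tfrac14 G'(u_n)[u_n]=\tfrac12\|\nabla u_n\|_2^2+\tfrac{q-2}{2q}V_{1,q}(u_n),
\]
which kills $V_{0,2}$ and, both remaining terms being nonnegative, immediately gives $\{\|\nabla u_n\|_2\}$ and $\{V_{1,q}(u_n)\}$ bounded; and
\[
G(u_n)-\tfrac{1}{2q}G'(u_n)[u_n]=\tfrac{q-1}{q}\|\nabla u_n\|_2^2+\tfrac{q-2}{2q}V_{0,2}(u_n),
\]
which kills $V_{1,q}$ and yields $\{V_{0,2}(u_n)\}$ bounded. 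Only then does the chain $V_{1,2}(u_n)\le M+V_{2,2}(u_n)$, Hardy--Littlewood--Sobolev, Young, and Gagliardo--Nirenberg come in, producing an inequality of the form $\log 2\,\|u_n\|_2^4\le V_{1,2}(u_n)\le C+C\|u_n\|_2^{2(3q-4)/(3(q-2))}$. The constraint $q<4$ is what makes the Gagliardo--Nirenberg/Young step legitimate, while $q>8/3$ is exactly the condition $\tfrac{2(3q-4)}{3(q-2)}<4$, which lets the quartic left-hand side absorb the right-hand side. Proposition \ref{cov1} is not used here.
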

		
		\begin{proof}
			First observe that, for all $\d>0$, the functional $G$ satisfies a weak Cerami condition with respect to $\X_r$ and $\Hr$ in the interval $(\d,+\infty)$ since the proof of the previous result can be exactly repeated for $G$ in the place of $I$,  ignoring estimates for $V_{2,q}$ and its differential.\\
			Thanks to this, we only have to show that if  $\{u_n\}$ is a Cerami sequence of $G$ such that $\inf_nG(u_n)>0$, then it is bounded in $\Hr$.
			So, take such a sequence and observe that, since
			\begin{equation*}\label{PS}
			G(u_n)+o_n(1)= G(u_n)-\frac 1{4}  G'(u_n)[u_n]=\frac{1}{2} \|\n u_n\|_2^2+\frac{q-2}{2q}V_{1,q}(u_n),
			\end{equation*}
			we have  that $\{\|\n u_n\|_2\}$ and $\{V_{1,q}(u_n)\}$ are bounded. As a consequence, observing that
			\begin{equation*}
			G(u_n)+o_n(1)= G(u_n)-\frac 1{2q}  G'(u_n)[u_n]=\frac{q-1}{q} \|\n u_n\|_2^2+\frac{q-2}{2q}V_{0,2}(u_n),
			\end{equation*}
			we deduce that also $\{V_{0,2}(u_n)\}$ is bounded.
			\\
			So there exists $M>0$ such that
%			\todo[inline]{Ultimo rigo: $\le$.}
			\begin{equation*}
			\label{secondestim}
			\begin{split}
			V_{1,2}(u_n)&\le M+ V_{2,2}(u_n)\\
%			&= M+	\ird \ird \log \left(1+\frac{2}{|x-y|}\right)|u_n(x)|^2|u_n(y)|^2\, dx \, dy\\
			&\le
			M+2 \ird \ird \frac{1}{|x-y|}|u_n(x)|^2|u_n(y)|^2\, dx \, dy\\
		%	(HLS)
			&\le M+
			C \|u_n\|_q^2 \|u_n\|_\frac{4q}{3q-4}^2\\
		%	(Young)
			&\le
			M+C
			\Big( \|u_n\|_q^{2q} 
			+ \|u_n\|_\frac{4q}{3q-4}^\frac{2q}{q-1}\Big)\\
		%	(Gagliardo)
			&\le
			M+C \|u_n\|_q^{2q} 
			+C
			\|\nabla u_n\|_2^\frac{4-q}{q-1}
			\|u_n\|_2 ^\frac{3q-4}{q-1}\\
		%	(Young)
			&\le M+
			C  \|u_n\|_q^{2q}
			+C 
			\left(
			\|\nabla u_n\|_2^2
			+
			\|u_n\|_2^\frac{2(3q-4)}{3(q-2)}
			\right)\\
			&\le M+
			C V_{1,q}(u_n)
			+C \|\nabla u_n\|_2^2
			+C\|u_n\|_2^\frac{2(3q-4)}{3(q-2)}
			\end{split}
			\end{equation*}
			and, by boundedness of $\{\|\n u_n\|_2\}$, $\{V_{1,q}(u_n)\}$ and \eqref{eq:controlL2}, we deduce that $\{\|u_n\|_2\}$ is bounded since  $q\in(8/3,4)$ .
		\end{proof}
		
		Now,  for every $n\ge 1$, we define 
		\begin{align}
		b_n&=\inf_{\g\in\G_n}\max_{\sigma\in D_n}I(\g(\sigma))\label{bn}\\
		c_n&=\inf_{\g\in\G_n}\max_{\sigma\in D_n}G(\g(\sigma))\nonumber
		\end{align}
		where $D_n=\{\sigma\in\R^n\mid |\sigma|\le1\}$ and 
		$$\G_n=\{\g: D_n\to \X_r \mid \g \hbox{ is odd and continuous, and } \g{|_{\partial D_n}}=\g_{0,n} \},$$
where $\g_{0,n}$ is defined in Proposition \ref{geo}.

	Moreover the following holds.
		\begin{proposition}\label{useful}
			We have
			\begin{enumerate}[label=(\roman*),ref=\roman*]
				\item \label{primorigo}  $b_n\ge c_n\ge \mu$,
				\item \label{secondorigo} $\lim_n c_n=+\infty$.
			\end{enumerate}
		\end{proposition}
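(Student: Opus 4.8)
The plan is to prove the two statements by quite different means: \eqref{primorigo} is elementary and \eqref{secondorigo} rests on a symmetric, fountain‑type intersection argument carried out in the $H^1$‑topology.

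For \eqref{primorigo}, the inequality $b_n\ge c_n$ is immediate, since $G(u)\le I(u)$ for every $u\in\X_r$ gives $\max_{\sigma\in D_n}G(\g(\sigma))\le\max_{\sigma\in D_n}I(\g(\sigma))$ for each $\g\in\G_n$, whence $c_n\le b_n$ after taking the infimum over $\G_n$. For the lower bound $c_n\ge\mu$ I would fix $\g\in\G_n$ and use that $\g(0)=0$ (because $\g$ is odd), so $\|\g(0)\|=0<\rho$, while on $\partial D_n$ one has $\g=\g_{0n}$ with $G(\g_{0n}(\sigma))<0$ by (\ref{g2}); since $G\ge 0$ on $\{\|u\|\le\rho\}$ by (\ref{g1}), this forces $\|\g_{0n}(\sigma)\|>\rho$ on $\partial D_n$. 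Fixing $\sigma_0\in\partial D_n$, the continuous function $t\mapsto\|\g(t\sigma_0)\|$ (continuous because $\X_r\hookrightarrow\Hr$) equals $0$ at $t=0$ and exceeds $\rho$ at $t=1$, so by the intermediate value theorem there is $\bar\sigma\in D_n$ with $\|\g(\bar\sigma)\|=\rho$; hence $\max_{D_n}G(\g)\ge G(\g(\bar\sigma))\ge\mu$ by (\ref{g1}), and taking the infimum yields $c_n\ge\mu$.

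For \eqref{secondorigo} I would run a Borsuk--Ulam intersection against codimension subspaces. Fix an orthonormal basis $\{e_j\}$ of $\Hr$, set $Y_m=\overline{\Span}\{e_j:j\ge m\}$ and let $P_{m-1}$ be the (finite rank, hence $\Hr$‑continuous) projection onto $\Span\{e_1,\dots,e_{m-1}\}$, so that $\ker P_{m-1}=Y_m$. Given $\g\in\G_n$, consider the symmetric bounded open set $O=\{\sigma\in D_n:\|\g(\sigma)\|<\rho_n\}$, where $\rho_n>0$ will be chosen below. As in part (i), $0\in O$ and, as long as $\rho_n<\min_{\partial D_n}\|\g_{0n}\|$, $\overline O$ does not meet $\partial D_n$, so $\partial O$ is a symmetric compact subset of the interior with genus $n$. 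The map $P_{n-1}\circ\g:\partial O\to\Span\{e_1,\dots,e_{n-1}\}\cong\R^{n-1}$ is odd and continuous (again via $\X_r\hookrightarrow\Hr$); were it nowhere zero, the genus of $\partial O$ would drop to at most $n-1$, a contradiction. Hence there is $\bar\sigma\in\partial O$ with $P_{n-1}\g(\bar\sigma)=0$, i.e. $\g(\bar\sigma)\in Y_n$ and $\|\g(\bar\sigma)\|=\rho_n$, giving
\[
c_n\ge\inf\{G(u):u\in\X_r\cap Y_n,\ \|u\|=\rho_n\}=:\delta_n.
\]
It then remains to show $\delta_n\to+\infty$ for a suitable $\rho_n\to+\infty$. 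The crucial point is that the principal part is coercive in the $H^1$‑norm: from $V_{1,2}(u)\ge\log 2\,\|u\|_2^4$ (see \eqref{eq:controlL2}) one gets $\mathcal A(u)=\|\n u\|_2^2+\tfrac12 V_{1,2}(u)\ge\tfrac12\|u\|^2-C$. On the tail space $Y_n$ the competing terms are small: by \eqref{control 2}, Proposition \ref{cov1} and Lemma \ref{embHr} one has $V_{2,2}(u)\le C\|u\|_{8/3}^4$ and $V_{1,q}(u)\le C\|u\|_q^q\|u\|^q$, while the classical radial compactness behind Lemma \ref{strauss} gives $\beta_n:=\sup_{u\in Y_n,\ \|u\|=1}\big(\|u\|_{8/3}+\|u\|_q\big)\to 0$. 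Thus on $Y_n\cap\{\|u\|=\rho_n\}$,
\[
G(u)\ge\tfrac12\rho_n^2-C-C\beta_n^4\rho_n^4-C\beta_n^q\rho_n^{2q},
\]
and choosing $\rho_n\to+\infty$ slowly enough that $\beta_n^4\rho_n^4$ and $\beta_n^q\rho_n^{2q}$ are $o(\rho_n^2)$ (possible since $\beta_n\to 0$ and $q>2$) yields $\delta_n\ge\tfrac14\rho_n^2-C\to+\infty$, hence $c_n\to+\infty$.

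The hard part is the interface between the two topologies. The maps in $\G_n$ live in $\X_r$ and the boundary datum $\g_{0n}$ is produced by the scaling (augmented dimension) construction of Proposition \ref{geo}, whereas both the coercive lower bound on $\mathcal A$ and the vanishing tail constants $\beta_n$ are genuinely $H^1$‑phenomena. I would therefore have to choose the generators $u_1,u_2,\dots$ and the scaling parameters $\bar t_n$ so that $\min_{\partial D_n}\|\g_{0n}\|\to+\infty$ at least as fast as the radius $\rho_n$ imposed by $\beta_n$, so that the intersection lemma applies with $\rho_n\to+\infty$. This matching of the $\X$‑norm geometry of the boundary with the $H^1$‑norm coercivity and compactness is exactly the delicate bookkeeping that realizes the $\X$‑norm/$H^1$‑norm comparison flagged before Lemma \ref{strauss}, and it is the step I expect to require the most care.
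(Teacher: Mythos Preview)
Your treatment of \eqref{primorigo} is correct and matches the paper's one-line appeal to Proposition~\ref{geo}.

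For \eqref{secondorigo} you take a genuinely different route. The paper does \emph{not} run a fountain/tail--space argument at all: it invokes the Cerami condition for $G$ established in Proposition~\ref{pr:C} and then cites the Ljusternik--Schnirelmann machinery of \cite[Chapter~9]{R2}. The point is that, once the critical sets $K_c$ of $G$ are compact (this is what Proposition~\ref{pr:C} buys), the standard index argument forces the symmetric minimax levels to diverge, for \emph{any} admissible choice of the boundary data $\gamma_{0n}$. Your approach bypasses Proposition~\ref{pr:C} entirely and tries to produce an explicit lower bound $c_n\ge\delta_n\to+\infty$ via Borsuk--Ulam against tail spaces $Y_n$ and the radial compactness $\beta_n\to 0$.

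The gap you flag at the end is real, and it is not just a matter of care. The values $c_n$ in the statement are defined through the $\gamma_{0n}$ produced in Proposition~\ref{geo}; those maps are fixed \emph{before} Proposition~\ref{useful}. Your intersection step needs $\min_{\sigma\in S^{n-1}}\|\gamma_{0n}(\sigma)\|>\rho_n$ with $\rho_n\to+\infty$ dictated by the rate $\beta_n\to 0$, and nothing in Proposition~\ref{geo} provides this a priori. You could go back and enlarge the scaling parameters $\bar t_n$ so that $\|\gamma_{0n}(\sigma)\|\ge\bar t_n\min_\sigma\|u_\sigma\|\to+\infty$ fast enough, but this changes the definition of $\Gamma_n$ and hence of $b_n,c_n$; you would be proving \eqref{secondorigo} for a different (though equally admissible) family of minimax values, not for the ones fixed in the paper. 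Moreover, the required growth of $\bar t_n$ depends on $\beta_n$, which in turn depends on the Hilbert basis $\{e_j\}$ you choose in $H^1_r$, so the whole construction has to be coordinated at the outset. The paper's approach, leaning on Proposition~\ref{pr:C}, avoids this bookkeeping completely and proves \eqref{secondorigo} for \emph{any} $\gamma_{0n}$ satisfying the conclusions of Proposition~\ref{geo}.
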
 
		
\begin{proof}
	Item (\ref{primorigo}) is a consequence of Proposition \ref{geo}. For (\ref{secondorigo}) we can argue as in \cite{HIT}, observing that the well known minimax methods described in \cite{R2} hold also in our case. Indeed, due to Proposition \ref{pr:C}, one can show that the subsets $K_{c}$, $K_{\overline{c}}$, and $\mathcal{K}$ in \cite[Chapter 9]{R2} are compact.
\end{proof}

		In the sequel we follow  the strategy of  \cite{HIT} (see also\cite{jj}).
		
		Define $\Tilde{\X_r}:=\R \times \X_r$ equipped with the norm $\|(s,v)\|_{\Tilde{\X_r}}:=(s^2+\|v\|^2)^{1/2}$. For any $\a\in \R$,
		define
			\[
	\rho_\alpha:\Tilde{\X_r}\to \X_r,
	\qquad
	\rho_\alpha(s,v):=e^{\alpha s}v(e^s\cdot),
		\]
and  $\varphi_\alpha:\Tilde{\X_r}\to \R$ 
 %Observe that $\rho_\alpha$ and $\vfi_\a$ are continuous.
 such that
	\begin{align*}
	\varphi_\alpha (s,v)
	&:=
	e^{2\alpha s} \|\n v\|^2_2 
	+ \frac {e^{4(\alpha-1)s}}{2}\ird \ird \log (|x-y|)|v(x)|^2|v(y)|^2\, dx \, dy
	-\frac {se^{4(\alpha-1)s}}{2} \|v\|_2^{4}
	\\
	&\quad 
	-\frac{e^{2(\alpha q-2)s}}{q}\ird \ird \log (|x-y|)|v(x)|^q|v(y)|^q\, dx \, dy
	+\frac {se^{2(\alpha q-2)s}}{q}  \|v\|_q^{2q}.
	\end{align*}
Observe that $\varphi_\a\in C^1$ and  for all $(s,v)\in\Tilde\X_r$
$$ I(\rho_\a(s,v))=\varphi_\a(s,v).$$
	Finally we define $J_\a:\X_r\to \R$ as follows
	\begin{align*}
	J_\alpha(u)
	&:=
	2\alpha  \| \nabla u \|_2^2 
	+ 2 (\alpha -1) \ird \ird \log (|x-y|)|u(x)|^2|u(y)|^2\, dx \, dy
	-\frac {1}{2} \| u \|_2^4\\
	&\qquad
	-\frac{2(\alpha q-2)}{q} \ird \ird \log (|x-y|)|u(x)|^q |u(y)|^q \, dx \, dy 
	+\frac {1}{q} \|u\|_q^{2q}.
	\end{align*}
	Observe that
	\begin{equation}
	\label{partialsphi}
	\begin{split}
	\partial_s \varphi_\alpha (s,v)
	&=
	2\alpha e^{2\alpha s} \|\n v\|^2_2 \\
	&\quad
	+ 2(\alpha-1)e^{4(\alpha-1)s} \ird \ird \log (|x-y|)|v(x)|^2|v(y)|^2\, dx \, dy\\
	&\quad
	-\frac {(1+4(\alpha-1)s)e^{4(\alpha-1)s}}{2} \|v\|_2^{4}
	\\
	&\quad 
	-\frac{2(\alpha q-2)}{q}e^{2(\alpha q-2)s}\ird \ird \log (|x-y|)|v(x)|^q|v(y)|^q\, dx \, dy\\
	&\quad
	+\frac {1+2(\alpha q-2)s}{q} e^{2(\alpha q-2)s}  \|v\|_q^{2q}\\
	&=
	2\alpha  \ird |\n \rho_\alpha (s,v) |^2 dx \\
	&\quad
	+ 2 (\alpha -1) \ird \ird \log (|x-y|)|\rho_\alpha(s,v(x))|^2|\rho_\alpha(s,v(y))|^2\, dx \, dy \\
	&\quad
	-\frac {1}{2} \left(\ird |\rho_\alpha(s,v)|^2 dx\right)^{2}\\
	&\quad
	-\frac{2(\alpha q-2)}{q} \ird \ird \log (|x-y|)|\rho_\alpha(s,v(x))|^q|\rho_\alpha(s,v(y))|^q\, dx \, dy \\
	&\quad 
	+\frac {1}{q} \left(\ird |\rho_\alpha(s,v)|^q dx\right)^{2}
	= J_\alpha (\rho_\alpha(s,v))
	\end{split}
	\end{equation}
	Moreover, by the linearity of $\rho_\alpha$ with respect to the function in $\X_r$,
	\begin{equation}
	\label{partialvphi}
	\partial_v  \varphi_\alpha (s, v) [w]=I'(\rho_\alpha(s, v))[\rho_\alpha(s, w)].
	\end{equation}
	Now, for any $n\ge 1,$ let us consider
		\[
		\tilde b_{\a,n}
		=
		\inf_{\tilde{\gamma} \in \tilde{\Gamma}_{n}} \max_{\s\in D_n} \varphi_\alpha(\tilde{\gamma}(\s)),
		\]
		where

		\[
		\tilde{\Gamma}_{n}
		:=
		\left\{
		\tilde{\gamma} \in C(D_n,\:\tilde X_{r}) :
		\begin{array}{l}
			\tilde{\gamma}=(s,\eta), s:D_n\to\R {\hbox{ is even}},\\
			 \eta:D_n\to \X_r \hbox{ is odd}, \tilde{\gamma}_{|\partial D_n}=(0,\g_{0,n})
		\end{array}
		\right\}.
		\]
		Observe that $\tilde\g=(0,\g)\in\tilde\Gamma_n$, for any $\g\in\Gamma_n$.
		On the other hand, for any $\a\in\R$ and for any $\tilde \g =(s,\eta)\in\tilde \Gamma_n$, $\g:=e^{\a s(\cdot)}\eta(\cdot)(e^{s(\cdot)}x)\in\Gamma_n $.
		%\begin{itemize}
		%	\item $\forall\a\in\R,\forall \g\in\Gamma_n:$ $(0,\g)\in\tilde \G_n$ and $\g=\rho_\a\circ(0,\g)$, 
		%	\item  $\forall\a\in\R,\forall \tilde\g\in\tilde\Gamma_n:$ $\rho_\a\circ\tilde\g\in\G_n$.
		%\end{itemize}

		As in \cite[Lemma 4.1]{HIT} we deduce the following
\begin{lemma}\label{bb}
For every $\a\in\R$ and $n\ge1$ we have $b_n=\tilde b_{\a,n}$.
\end{lemma}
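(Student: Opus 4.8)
The plan is to establish the two inequalities $\tilde b_{\alpha,n}\le b_n$ and $b_n\le\tilde b_{\alpha,n}$ separately, in each case transporting an admissible path from one minimax class to the other via the map $\rho_\alpha$, and exploiting the level-preserving identity $I(\rho_\alpha(s,v))=\varphi_\alpha(s,v)$ recorded before the statement. The two structural observations stated just above the lemma, namely that $(0,\gamma)\in\tilde\Gamma_n$ whenever $\gamma\in\Gamma_n$, and that $\rho_\alpha\circ\tilde\gamma\in\Gamma_n$ whenever $\tilde\gamma\in\tilde\Gamma_n$, supply exactly the correspondence needed.

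First I would prove $\tilde b_{\alpha,n}\le b_n$. Given any $\gamma\in\Gamma_n$, I set $\tilde\gamma:=(0,\gamma)$, which lies in $\tilde\Gamma_n$ because the constant function $s\equiv 0$ is even, $\gamma$ is odd, and on $\partial D_n$ one has $\tilde\gamma=(0,\gamma_{0,n})$. Since $\rho_\alpha(0,v)=e^{\alpha\cdot 0}v(e^0\cdot)=v$, the identity $I\circ\rho_\alpha=\varphi_\alpha$ gives $\varphi_\alpha(\tilde\gamma(\sigma))=I(\gamma(\sigma))$ for every $\sigma\in D_n$, hence $\max_{\sigma\in D_n}\varphi_\alpha(\tilde\gamma(\sigma))=\max_{\sigma\in D_n}I(\gamma(\sigma))$. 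Taking the infimum over $\gamma\in\Gamma_n$ yields $\tilde b_{\alpha,n}\le b_n$. Conversely, to get $b_n\le\tilde b_{\alpha,n}$, I start from an arbitrary $\tilde\gamma=(s,\eta)\in\tilde\Gamma_n$ and define $\gamma:=\rho_\alpha\circ\tilde\gamma$, that is $\gamma(\sigma)=e^{\alpha s(\sigma)}\eta(\sigma)(e^{s(\sigma)}\cdot)$. Applying $I\circ\rho_\alpha=\varphi_\alpha$ once more, $\max_{\sigma\in D_n}I(\gamma(\sigma))=\max_{\sigma\in D_n}\varphi_\alpha(\tilde\gamma(\sigma))$, so $b_n\le\max_{\sigma\in D_n}\varphi_\alpha(\tilde\gamma(\sigma))$, and passing to the infimum over $\tilde\Gamma_n$ closes the argument.

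The argument is essentially bookkeeping once the scaling identity and the two preliminary remarks are in hand; the only point requiring genuine care is verifying that the transported path $\gamma=\rho_\alpha\circ\tilde\gamma$ is an admissible competitor, i.e. that $\gamma\in\Gamma_n$. Continuity is clear from continuity of $s$, $\eta$ and $\rho_\alpha$; oddness follows because $s$ is even and $\eta$ is odd, combined with the homogeneity $\rho_\alpha(s,-v)=-\rho_\alpha(s,v)$, so that $\gamma(-\sigma)=\rho_\alpha(s(\sigma),-\eta(\sigma))=-\gamma(\sigma)$; and the boundary constraint holds since $\tilde\gamma_{|\partial D_n}=(0,\gamma_{0,n})$ forces $\gamma_{|\partial D_n}=\rho_\alpha(0,\gamma_{0,n})=\gamma_{0,n}$. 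I would also note in passing that $\rho_\alpha$ genuinely maps $\tilde\X_r$ into $\X_r$, since the dilation preserves radial symmetry and, by the conformal invariance of the Dirichlet integral in dimension two together with the finiteness of the weighted norm under rescaling, keeps the $\X$-norm finite; this guarantees that both minimax classes are well defined. Combining the two inequalities gives $b_n=\tilde b_{\alpha,n}$ for every $\alpha\in\R$ and $n\ge 1$.
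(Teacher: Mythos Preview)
Your proof is correct and follows essentially the same approach as the paper: both directions are obtained by transporting admissible paths between $\Gamma_n$ and $\tilde\Gamma_n$ via the identity $I(\rho_\alpha(s,v))=\varphi_\alpha(s,v)$, using that $(0,\gamma)\in\tilde\Gamma_n$ for $\gamma\in\Gamma_n$ and $\rho_\alpha\circ\tilde\gamma\in\Gamma_n$ for $\tilde\gamma\in\tilde\Gamma_n$. Your additional explicit verification that $\rho_\alpha\circ\tilde\gamma$ is odd, continuous, and satisfies the boundary condition is a welcome elaboration of a point the paper only records as an observation before the lemma.
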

\begin{proof}
If $\gamma\in\Gamma_n$ and $\s\in D_n$, 
		\[
		I(\gamma(\s))= I(\rho_\alpha(0,\gamma(\s)))=\varphi_\alpha(0,\gamma(\s)),
		\]
		then
		\begin{equation}
		\label{max0}
		\max_{\s\in D_n
		} I(\gamma(\s))= \max_{\s \in D_n}\varphi_\alpha(0,\gamma(\s))
		\end{equation}
		and so
		\[
		b_n = \inf_{\gamma \in \Gamma_n} \max_{\s\in D_n}I(\gamma(\s))= \inf_{\gamma \in \Gamma_n} \max_{\s\in D_n}\varphi_\alpha(0,\gamma(\s)) \ge \inf_{\tilde\gamma \in \tilde\Gamma_n} \max_{\s\in D_n}\varphi_\alpha(\tilde\gamma(\s))= \tilde b_{\a,n}.
		\]
		On the other hand, if
		$\tilde{\gamma}\in\tilde{\Gamma}_n$,  for every $\s\in D_n$, $\varphi_\alpha(\tilde{\gamma}(\s))=I(\rho_\alpha\circ\tilde{\gamma}(\s))$ so that
		\[
		\max_{\s \in D_n}\varphi_\alpha(\tilde{\gamma}(\s))
		= \max_{\s\in D_n} I(\rho_\alpha\circ\tilde{\gamma}(\s))
		\]
		and then
		\[
		\tilde b_{\a,n} =\inf_{\tilde\gamma \in \tilde{\Gamma}_n} \max_{\s\in D_n} \varphi_\a(\tilde\g(\s))= \inf_{\tilde\gamma \in \tilde{\Gamma}_n} \max_{\s\in D_n} I(\rho_\alpha\circ\tilde{\gamma}(\s))  \ge \inf_{\gamma \in \Gamma_n} \max_{\s\in D_n}I(\gamma(\s))=b_n. 
		\]
\end{proof}
		
		Now we are going to prove that every $b_n$ is a critical level. \\
		We first introduce  the following abstract result

	\begin{proposition}[Proposition 2.8 of \cite{LW}]\label{LWProp28}
		Let $X$ be a Banach space and $M$ a metric space. Let $M_0$ be a closed subspace of $M$ and $\Gamma_0 \subset C\left(M_0, X\right)$. Define
		$$
		\Gamma:=\left\{\gamma \in C(M, X):\left.\gamma\right|_{M_0} \in \Gamma_0\right\}
		$$
		If $\varphi \in C^1(X, \mathbb{R})$ satisfies
		$$
		\infty>c:=\inf _{\gamma \in \Gamma} \sup _{u \in M} \varphi(\gamma(u))>a:=\sup _{\gamma_0 \in \Gamma_0} \sup _{u \in M_0} \varphi\left(\gamma_0(u)\right),
		$$
		then, for every $\varepsilon \in\left(0, \frac{c-a}{2}\right), \delta>0$, and $\gamma \in \Gamma$ such that
		$$
		\sup _M \varphi \circ \gamma \le c+\varepsilon
		$$
		there exists $u \in X$ such that
		\begin{enumerate}[label=(\alph*),ref=\alph*]
			\item $c-2 \varepsilon \le \varphi(u) \le c+2 \varepsilon$;
			\item $\operatorname{dist}(u, \gamma(M)) \le 2 \delta$;
			\item $\left(1+\|u\|_X\right)\left\|\varphi^{\prime}(u)\right\|_{X^{\prime}}<\frac{8 \varepsilon}{\delta}$.
		\end{enumerate}
	\end{proposition}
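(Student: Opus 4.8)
The plan is to prove this as a \emph{quantitative deformation lemma}, following the classical Willem-type scheme (the constants $2\varepsilon$, $2\delta$ and $8\varepsilon/\delta$ are exactly those of that scheme), but with the pseudo-gradient flow rescaled by the weight $1+\|\cdot\|_X$, so that the conclusion comes out in the Cerami form (c). I would argue by contradiction: assume that \emph{no} $u\in X$ satisfies (a), (b) and (c) at once. Then on the closed set
\[
A:=\varphi^{-1}\big([c-2\varepsilon,\,c+2\varepsilon]\big)\cap\{u\in X:\dist(u,\gamma(M))\le 2\delta\}
\]
one has $(1+\|u\|_X)\|\varphi'(u)\|_{X'}\ge 8\varepsilon/\delta$; in particular $\varphi'\neq0$ on $A$, and the goal becomes to deform $\gamma$ into a new admissible path whose maximum of $\varphi$ is $<c$, contradicting $c=\inf_{\Gamma}\sup_M\varphi\circ\gamma$. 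Note that, since $\varepsilon<(c-a)/2$, the boundary data lie strictly below the band: $\gamma(M_0)\subset\{\varphi\le a\}\subset\{\varphi<c-2\varepsilon\}$.

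First I would construct the vector field. On the open set $\{\varphi'\neq0\}\supset A$ choose a locally Lipschitz normalized pseudo-gradient $w$ with $\|w(u)\|_X\le1$ and $\langle\varphi'(u),w(u)\rangle\ge\tfrac12\|\varphi'(u)\|_{X'}$; this is standard, via a locally finite partition of unity. Introduce two Lipschitz cut-offs: a spatial one $\chi_1$, equal to $1$ on the $\delta$-neighbourhood $N_\delta$ of $\gamma(M)$ and vanishing off its $2\delta$-neighbourhood $N_{2\delta}$, and a level one $\chi_2=\chi_2(\varphi(u))$, equal to $1$ on $[c-\varepsilon,c+\varepsilon]$ and vanishing off $[c-2\varepsilon,c+2\varepsilon]$. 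Setting $\rho:=\chi_1\chi_2$ and
\[
W(u):=-\rho(u)\,(1+\|u\|_X)\,w(u)
\]
($W:=0$ where $\rho=0$), the field $W$ is locally Lipschitz and globally defined with $\|W(u)\|_X\le1+\|u\|_X$, so by Gronwall the Cauchy problem $\partial_t\eta=W(\eta)$, $\eta(0,\cdot)=\mathrm{id}$, generates a global flow $\eta:[0,\infty)\times X\to X$. Two localisation facts are then immediate: since $W\equiv0$ off $N_{2\delta}$, the closed set $N_{2\delta}$ is invariant, so every trajectory issuing from $\gamma(M)$ remains within distance $2\delta$ of $\gamma(M)$, i.e.\ inside the region where the standing gradient estimate is assumed; and since $W\equiv0$ on $\{\varphi<c-2\varepsilon\}\supset\gamma(M_0)$, the flow fixes $\gamma(M_0)$, so the deformed path $\tilde\gamma:=\eta(T,\gamma(\cdot))$ satisfies $\tilde\gamma|_{M_0}=\gamma|_{M_0}\in\Gamma_0$, i.e.\ $\tilde\gamma\in\Gamma$.

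Then I would estimate the energy along the flow. Since
\[
\tfrac{d}{dt}\varphi(\eta)=\langle\varphi'(\eta),W(\eta)\rangle\le-\tfrac12\,\rho(\eta)\,(1+\|\eta\|_X)\,\|\varphi'(\eta)\|_{X'}\le0,
\]
$\varphi$ is non-increasing along trajectories, so the points of $\gamma(M)$ starting at a level $\le c-\varepsilon$ stay below $c$. On the core region, where $\rho=1$, the contradiction hypothesis yields the $u$-\emph{independent} rate $\tfrac{d}{dt}\varphi(\eta)\le-\tfrac12\cdot\tfrac{8\varepsilon}{\delta}=-\tfrac{4\varepsilon}{\delta}$: here the weight $1+\|\eta\|_X$ in $W$ is exactly what cancels the weight in $(1+\|u\|_X)\|\varphi'(u)\|_{X'}\ge8\varepsilon/\delta$ and produces a constant decay. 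Taking $T=\delta/2$, a point of $\gamma(M)$ starting in $(c-\varepsilon,c+\varepsilon]$ (the only remaining case, because $\sup_M\varphi\circ\gamma\le c+\varepsilon$) is driven down by at least $2\varepsilon$, hence to a level $\le c-\varepsilon<c$, as long as its trajectory stays in the core. Thus $\sup_M\varphi\circ\tilde\gamma<c$ with $\tilde\gamma\in\Gamma$, contradicting the definition of $c$; this contradiction yields the desired $u$.

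The hard part will be the last reconciliation: the decay estimate is available only where $\rho=1$, while the weighted flow may travel a long way (its speed is bounded by $1+\|\eta\|_X$, not by a constant), so one must ensure that a near-maximal point descends by more than $\varepsilon$ \emph{before} it can cross the transition layer $N_{2\delta}\setminus N_\delta$ on which $\chi_1$ switches off. This is precisely what the factor $8$ (rather than, say, $4$) together with the gap between $N_\delta$ and $N_{2\delta}$ is designed to absorb, through the usual arc-length bookkeeping. I would also observe that in our intended application $M=D_n$ is compact, so $\gamma(M)$, and hence $N_{2\delta}$, is bounded; consequently $\|\eta\|_X$ stays bounded along all relevant trajectories, the speed of $W$ is bounded by a constant, and this estimate becomes elementary. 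The remaining ingredients — the existence of the locally Lipschitz pseudo-gradient $w$ and the global solvability of the flow — are routine consequences of $\varphi\in C^1(X,\R)$ and the bound $\|W\|_X\le1+\|\cdot\|_X$.
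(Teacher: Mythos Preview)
The paper does not prove this proposition; it is quoted verbatim from \cite{LW} and used as a black box, so there is no ``paper's own proof'' to compare against. Your sketch is the natural route: a Willem-type quantitative deformation lemma, with the pseudo-gradient field rescaled by $1+\|\cdot\|_X$ so that the contradiction hypothesis yields a uniform decay rate and the conclusion comes out in Cerami form. The architecture (cut-offs $\chi_1,\chi_2$, invariance of $N_{2\delta}$, fixing of $\gamma(M_0)$ via $\varepsilon<(c-a)/2$) is correct and you identify the right constants.

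There is, however, a genuine gap in the general case that your phrase ``the usual arc-length bookkeeping'' does not actually close. In your Subcase where the trajectory exits $N_\delta$ before $\varphi$ drops below $c-\varepsilon$, the distance travelled is $\ge\delta$, but with the weighted field the speed is only bounded by $1+\|\eta\|_X$, so one gets $\int_0^{t_0}(1+\|\eta(s)\|_X)\,ds\ge\delta$ with \emph{no} lower bound on $t_0$ when $\|\eta\|$ is large; hence the energy drop $\tfrac{4\varepsilon}{\delta}t_0$ need not reach $2\varepsilon$. The factor $8$ versus $4$ does not rescue this: the issue is the unbounded weight, not a missing factor of two. A full proof for arbitrary $M$ typically goes via Ekeland's variational principle on the path space $\Gamma$ (endowed with the sup metric), or via the Cerami--Finsler metric $d(u,v)=\inf_\sigma\int(1+\|\sigma\|)^{-1}\|\sigma'\|$, rather than a bare deformation. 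You are right that in the paper's application $M=D_n$ is compact, so $\gamma(M)$ and $N_{2\delta}$ are bounded, the speed of $W$ is uniformly bounded on the relevant trajectories, and your argument then goes through cleanly; since that is the only case the paper needs, your sketch suffices \emph{for this paper}, but it is not a proof of the proposition as stated.
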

	
	\begin{lemma}\label{LemexCPS}
For every $n\ge 1$ and $\a\in\R$, there exists a sequence $\{u_{j}\}\subset\X_r$ such that, up to a subsequence,	
\begin{itemize}
\item[$1.$] 	
$I(u_{j})\to b_n$,
\item[$2.$] $\|I'(u_{j})\|_{\X'}\left(1+\|u_{j}\|_{\X_r}\right)\to 0$,
\item[$3.$] 
$J_\alpha(u_{j}) \to 0$,
\end{itemize}
 as  $j\to+\infty$.

	\end{lemma}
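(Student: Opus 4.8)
The plan is to apply the abstract linking result of Proposition~\ref{LWProp28} to the augmented functional $\varphi_\alpha\in C^1(\Tilde{\X_r},\R)$, and then to pull the resulting Cerami-type point back through the isomorphism $\rho_\alpha$ to $\X_r$. The two structural identities, namely $\partial_s\varphi_\alpha(s,v)=J_\alpha(\rho_\alpha(s,v))$ from \eqref{partialsphi} and $\partial_v\varphi_\alpha(s,v)[w]=I'(\rho_\alpha(s,v))[\rho_\alpha(s,w)]$ from \eqref{partialvphi}, together with $I(\rho_\alpha(s,v))=\varphi_\alpha(s,v)$, are precisely what convert a point of $\varphi_\alpha$ with small gradient into a point $u=\rho_\alpha(s,v)$ enjoying the three desired properties. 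Concretely I take $X=\Tilde{\X_r}$, $M=D_n$, $M_0=\partial D_n=S^{n-1}$, $\Gamma_0=\{(0,\gamma_{0,n})\}$ and $\Gamma=\tilde\Gamma_n$; by Lemma~\ref{bb} the associated minimax value is $c=\tilde b_{\alpha,n}=b_n<+\infty$, while on $M_0$ one has $\varphi_\alpha(0,\gamma_{0,n}(\sigma))=I(\gamma_{0,n}(\sigma))<0$ by Proposition~\ref{geo}, so that $a\le 0<\mu\le b_n=c$ by Proposition~\ref{useful}. Hence the hypothesis $c>a$ holds.

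Next I fix $\varepsilon_j\to 0^+$ and set $\delta_j:=\sqrt{\varepsilon_j}$, so that $\delta_j\to 0$ and $\varepsilon_j/\delta_j\to 0$. For each large $j$ I choose a near-optimal path of the special form $\tilde\gamma_j=(0,\gamma_j)$ with $\gamma_j\in\Gamma_n$ and $\max_{D_n}I\circ\gamma_j\le b_n+\varepsilon_j$; since $\varphi_\alpha(0,\gamma_j(\sigma))=I(\gamma_j(\sigma))$, this gives $\sup_{D_n}\varphi_\alpha\circ\tilde\gamma_j\le c+\varepsilon_j$. Proposition~\ref{LWProp28} then yields $(s_j,v_j)\in\Tilde{\X_r}$ with: (a) $|\varphi_\alpha(s_j,v_j)-c|\le 2\varepsilon_j$; (b) $\dist_{\Tilde{\X_r}}\big((s_j,v_j),\tilde\gamma_j(D_n)\big)\le 2\delta_j$; and (c) $(1+\|(s_j,v_j)\|_{\Tilde{\X_r}})\,\|\varphi_\alpha'(s_j,v_j)\|_{\Tilde{\X_r}'}<8\varepsilon_j/\delta_j$. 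Setting $u_j:=\rho_\alpha(s_j,v_j)\in\X_r$, property~1 is immediate from $I(u_j)=\varphi_\alpha(s_j,v_j)\to c=b_n$, and property~3 follows from $J_\alpha(u_j)=\partial_s\varphi_\alpha(s_j,v_j)$ and $|\partial_s\varphi_\alpha(s_j,v_j)|\le\|\varphi_\alpha'(s_j,v_j)\|_{\Tilde{\X_r}'}\to 0$ by (c).

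The heart of the matter, and the step I expect to be the main obstacle, is property~2: because $\rho_\alpha(s,\cdot)$ is \emph{not} an isometry of $\X_r$, smallness of the partial derivative $\partial_v\varphi_\alpha(s_j,v_j)$ does not transfer verbatim to smallness of $I'(u_j)$. The decisive observation is that every point of $\tilde\gamma_j(D_n)$ has vanishing $\R$-component, so (b) forces $|s_j|\le 2\delta_j\to 0$; in particular $\{s_j\}$ is bounded. For bounded $s$ the scaling distorts the $\X_r$-norm only by bounded factors: $\|\nabla\rho_\alpha(-s,\phi)\|_2^2=e^{-2\alpha s}\|\nabla\phi\|_2^2$, while $\log(2+e^{s}|x|)\le C\log(2+|x|)$ yields $\|\rho_\alpha(-s,\phi)\|_{*,2}\le C\|\phi\|_{*,2}$, so that $\|\rho_\alpha(-s_j,\phi)\|_{\X_r}\le C\|\phi\|_{\X_r}$ and $\|u_j\|_{\X_r}\le C\|v_j\|_{\X_r}\le C\|(s_j,v_j)\|_{\Tilde{\X_r}}$ uniformly in $j$. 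Writing any $\phi\in\X_r$ as $\phi=\rho_\alpha(s_j,\rho_\alpha(-s_j,\phi))$ and using \eqref{partialvphi} gives $I'(u_j)[\phi]=\partial_v\varphi_\alpha(s_j,v_j)[\rho_\alpha(-s_j,\phi)]$, whence $\|I'(u_j)\|_{\X_r'}\le C\|\varphi_\alpha'(s_j,v_j)\|_{\Tilde{\X_r}'}$. Multiplying by $1+\|u_j\|_{\X_r}\le C(1+\|(s_j,v_j)\|_{\Tilde{\X_r}})$ and invoking (c) yields
$$
\|I'(u_j)\|_{\X_r'}\,(1+\|u_j\|_{\X_r})\le C\,(1+\|(s_j,v_j)\|_{\Tilde{\X_r}})\,\|\varphi_\alpha'(s_j,v_j)\|_{\Tilde{\X_r}'}<\frac{8C\varepsilon_j}{\delta_j}\to 0.
$$
Finally, since $u_j$ is radial, the Palais principle of symmetric criticality gives $\|I'(u_j)\|_{\X'}=\|I'(u_j)\|_{\X_r'}$, which is property~2. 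Thus once $s_j\to 0$ has been extracted from (b), the norm-distortion estimates are routine and the construction is complete.
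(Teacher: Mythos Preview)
Your proof is correct and follows essentially the same route as the paper: apply Proposition~\ref{LWProp28} to $\varphi_\alpha$ on $\Tilde{\X_r}$ with near-optimal paths of the special form $(0,\gamma_j)$, use the vanishing $\R$-component of the image to force $s_j\to 0$, set $u_j=\rho_\alpha(s_j,v_j)$, and transfer the three conclusions via \eqref{partialsphi}--\eqref{partialvphi}. Your treatment is in fact slightly more explicit than the paper's in two places --- the choice $\delta_j=\sqrt{\varepsilon_j}$ making the smallness in (b) and (c) simultaneous, and the uniform norm-distortion bound $\|\rho_\alpha(\pm s_j,\cdot)\|_{\X_r\to\X_r}\le C$ for bounded $s_j$ --- whereas the paper records the corresponding steps as $\|u_j\|_\X=\|(s_j,v_j)\|_{\Tilde\X}+o_j(1)$ and $\|\tilde v_j\|_\X=O(1)\|v\|_\X$ without further comment; the closing remark on radial symmetry to pass from $\X_r'$ to $\X'$ is likewise something the paper leaves implicit.
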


	\begin{proof}
		By definition of $b_n$, for any $j\ge 1$ there exists  $\gamma_j\in\Gamma_n$ such that
		\[
		\max_{\s \in D_{n} } I(\gamma_j(\s))\le b_n + \frac{1}{j^2},
		\]
		so that, taking $\tilde{\gamma}_j=(0,\gamma_j)\in\tilde{\Gamma}_n$, by \eqref{max0}, we have
		\[
		\max_{\s \in D_{ n} }  \varphi_\alpha(\tilde{\gamma}_j(\s))\le b_n + \frac{1}{j^2}.
		\]
 Then,  applying Proposition \ref{LWProp28} for $X=\tilde\X_r$, $M=D_{n}$, $M_0= \partial D_{ n}$,
			$\Gamma_0=\{\tilde\g_{0,n}\}$, $\G=\tilde\G_n$, and 
		$\varphi=\varphi_\alpha$, since by Proposition \ref{geo}, Proposition \ref{useful} and Lemma \ref{bb}
			$$\inf_{\tilde\gamma \in \tilde{\Gamma}_n} \max_{\s\in D_{n}} \varphi_\a(\tilde\g(\s))=\tilde {b}_{\a, n}=b_n\ge\mu>0\ge
%\red{\sup _{\tilde\gamma_0 \in \{\tilde\g_{0, n}\}} \sup _{u \in \partial D_{ n}} \varphi_\a\left(\tilde\gamma_0(\s)\right)}
 \sup _{\sigma \in \partial D_{ n}} \varphi_\a\left(\tilde\gamma_{0,n}(\s)\right),$$
		there exists  $(s_{j},v_j)\in \tilde \X_r$ such that, as $j \to +\infty$,
		\begin{enumerate}[label=($\varphi_\alph*$),ref=$\varphi_\alph*$]
			\item \label{phia} $\varphi_\alpha (s_{j}, v_{j})\to b_n$;
			\item \label{phib} $\operatorname{dist}\big((s_{j},v_{j}), \tilde{\gamma}_{j}(D_{ n})\big)  \to 0$;
			\item \label{phic} $(1+\|(s_{j}, v_{j})\|_{\tilde{\X}_r})\|\varphi_\alpha'(s_{j}, v_{j})\|_{\tilde{\X}'} \to 0$.
		\end{enumerate}
		By (\ref{phib}) we get that 
		\begin{equation}\label{sn0}
		s_{j}\to 0.
		\end{equation}
		Moreover, by \eqref{partialsphi} and \eqref{partialvphi}, for every $(s,v)\in\tilde{\X_r}$,
		\begin{equation}
		\label{phi'}
		\varphi_\alpha'(s_{j}, v_{j}) [(s,v)]
		=I'(\rho_\alpha (s_{j}, v_{j})) [\rho (s_{j}, v)]
		+ J_\alpha(\rho_\alpha (s_{j}, v_{j}))s,
		\end{equation}
		and, taking  $s=1$ and $v=0$, by (\ref{phic}), we get
		\[
		J_\alpha(\rho_\alpha (s_{j}, v_{j})) \to 0 \text{ as }\ j\to + \infty.
		\]
		Hence, taking $u_{j}:=\rho_\alpha (s_{j}, v_{j})$, we have that, as $j \to + \infty$,
		\[
		I(u_{j})\to b_n
		\quad
		\text{and}
		\quad
		J_\alpha(u_{j})\to 0.
		\]
		Thus, since we can see any function $v\in\X_r$ as $\rho_\alpha(s_j,\tilde{v}_j)$, where $\tilde{v}_j:=e^{-\alpha s_j} v(e^{-s_j}\cdot)$, by \eqref{phi'} we have
		\[
		I'(u_j)[v]
		= I'(\rho_\alpha(s_j,v_j))[\rho_\alpha(s_j,\tilde{v}_j)]
		=\varphi_\alpha'(s_j,v_j)[(0,\tilde{v}_j)],
		\]
		and so, by \eqref{phic}, \eqref{sn0} and \eqref{phi'},
		\begin{align*}
		(1+\|u_j\|_{\X})
		|I'(u_j)[v]|
		%=(1+\|u_n\|) |I'(\rho_\alpha(s_n,v_n))[\rho_\alpha(s_n,\tilde{v}_n)]|
		&=\big(1+\|(s_j,  v_j)\|_{\tilde{\X}}+o_j(1)\big)|\varphi_\alpha'(s_j,v_j)[(0,\tilde{v}_j)]|\\
		&=o_j(1)\|\tilde v_j\|_{\X}=o_j(1)\| v\|_{\X}
		\end{align*}
	and we conclude.
	\end{proof}

\begin{lemma}\label{lebound}
		If
		\begin{equation}\label{pluti}
			\a <-\frac{1}{2 (q-2)\log 2},
		\end{equation}
		then, for every $n\ge 1$, we have that
		%\[
		%\alpha < -\frac{1}{2(q-2)\log 2} \quad\text{ and } \quad\frac{8}{3}<q<4,
		%\]
		any sequence $\{u_{j}\}$ in $\X_r$ that satisfies
		\begin{itemize}
			\item[$1.$] $I(u_{j})\to b_n<+\infty$,
			\item[$2.$]	$	\|I'(u_{j})\|_{\X'}\left(1+\|u_{j}\|_{\X}\right)\to 0,$
			\item[$3.$] $J_\alpha(u_{j}) \to 0,$
		\end{itemize}
		as $j\to +\infty$,	is bounded in $\Hr$.
\end{lemma}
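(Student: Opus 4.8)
The plan is to distill from the hypotheses a single inequality in which \eqref{pluti} is precisely what keeps the coefficient of $\|u_j\|_2^4$ positive, and then to absorb the only indefinite term by an interpolation argument valid exactly for $q\in(8/3,4)$. I would not use the Nehari identity at all, but only conditions $1.$ and $3.$: since $J_\alpha(u_j)\to0$ and $I(u_j)\to b_n$, the combination $J_\alpha(u_j)-2(\alpha q-2)I(u_j)$ stays bounded. The virtue of this particular multiple of $I$ is that it cancels the term $V_{0,q}$, which is the only quantity that is neither signed nor boundable from below; a direct computation gives
\[
J_\alpha(u)-2(\alpha q-2)I(u)=[-2\alpha(q-1)+4]\|\n u\|_2^2+\alpha(2-q)V_{0,2}(u)-\tfrac12\|u\|_2^4+\tfrac1q\|u\|_q^{2q}.
\]
Since $\alpha<0$ and $q>2$, both $c_1:=-2\alpha(q-1)+4$ and $\alpha(2-q)$ are strictly positive.

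Next I would split $V_{0,2}=V_{1,2}-V_{2,2}$ and invoke \eqref{eq:controlL2}, namely $V_{1,2}(u)\ge\log2\,\|u\|_2^4$; because $\alpha(2-q)>0$ this lower bound may be used with the right sign. Moving $V_{2,2}$ to the right and collecting the $\|u_j\|_2^4$ terms, the coefficient of $\|u_j\|_2^4$ becomes $c_2:=|\alpha|(q-2)\log2-\tfrac12$, and here is the only place where \eqref{pluti} enters: the assumption $\alpha<-\frac{1}{2(q-2)\log2}$ is exactly $|\alpha|(q-2)\log2>\tfrac12$, so $c_2>0$. One is thus left with
\[
c_1\|\n u_j\|_2^2+c_2\|u_j\|_2^4+\tfrac1q\|u_j\|_q^{2q}\le C+|\alpha|(q-2)V_{2,2}(u_j)+o_j(1),
\]
where every term on the left is nonnegative and the sole obstruction to boundedness is the repulsive term $V_{2,2}(u_j)$ on the right.

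The heart of the proof, and the step fixing $q>8/3$, is to absorb $V_{2,2}(u_j)$. I would estimate, by Hardy--Littlewood--Sobolev in the asymmetric form, $V_{2,2}(u)\le C\|u\|_q^2\|u\|_{4q/(3q-4)}^2$, which is admissible precisely for $2<q<4$; a first Young inequality peels off an arbitrarily small multiple of $\|u\|_q^{2q}$, while Gagliardo--Nirenberg gives
\[
\|u_j\|_{4q/(3q-4)}^{2q/(q-1)}\le C\|\n u_j\|_2^{(4-q)/(q-1)}\|u_j\|_2^{(3q-4)/(q-1)}.
\]
A final Young inequality absorbs this product into $\|\n u_j\|_2^2$ and $\|u_j\|_2^4$ provided $\frac{4-q}{2(q-1)}+\frac{3q-4}{4(q-1)}=\frac{q+4}{4(q-1)}<1$, that is, provided $q>8/3$; the bound $q<4$ is what makes the gradient exponent positive and the HLS exponents admissible. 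Choosing the Young parameters small, the $\|u_j\|_q^{2q}$-term and the absorbed parts pass to the left with still-positive coefficients, and one concludes that $\|\n u_j\|_2$ and $\|u_j\|_2$ are bounded, i.e.\ $\{u_j\}$ is bounded in $\Hr$.

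I expect this HLS--Gagliardo--Nirenberg--Young chain controlling $V_{2,2}$ to be the genuine difficulty, since it is there that the numerology $q>8/3$ is forced; by contrast the role of \eqref{pluti} is transparent once the combination $J_\alpha-2(\alpha q-2)I$ has been chosen, as it serves only to keep the coefficient of $\|u_j\|_2^4$ positive.
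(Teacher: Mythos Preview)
Your proposal is correct and is essentially the paper's own argument: the paper forms the combination $I(u_j)-\frac{1}{2(\alpha q-2)}J_\alpha(u_j)$, which differs from your $J_\alpha(u_j)-2(\alpha q-2)I(u_j)$ only by the positive factor $-2(\alpha q-2)$, and then runs the identical HLS--Gagliardo--Nirenberg--Young chain on $V_{2,2}$ (with the same use of \eqref{eq:controlL2} for $V_{1,2}$, the same r\^ole of \eqref{pluti} to keep the $\|u_j\|_2^4$ coefficient positive, and the same appearance of $q>8/3$ in the final Young step). The only cosmetic difference is that the paper keeps the resulting lower-order term $\|u_j\|_2^{2(3q-4)/(3(q-2))}$ rather than absorbing it into $\|u_j\|_2^4$ via a further Young inequality, but this is equivalent.
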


	\begin{proof}
	Since
		\begin{align*}
			&I(u_j)  - \frac{1}{2(\alpha q-2)} J_\alpha (u_j)\\
			&=
			\frac{\a q-2-\a}{\a q -2} \|\n u_j\|^2_2 
			+ \frac{\a (q-2)}{2(\a q -2)} \ird \ird \log (|x-y|)|u_j(x)|^2|u_j(y)|^2\, dx \, dy
			\\
			&\quad
			+\frac{1}{4(\a q-2)}   \|u_j\|_2^{4}
			-\frac{1}{2q(\alpha q-2)}  \|u_j\|_q^{2q}
			= b_n+o_j(1),
		\end{align*}
		using  \eqref{Vkrel},
		\begin{equation}
			\label{starting}
			\begin{split}
				&\frac{\a q-2-\a}{\a q -2} \|\n u_j\|^2_2 
				+ \frac{\a (q-2)}{2(\a q -2)} \ird \ird \log (2+|x-y|)|u_j(x)|^2|u_j(y)|^2\, dx \, dy
				\\
				&\quad
				+\frac{1}{4(\a q-2)}   \|u_j\|_2^{4}
				-\frac{1}{2q(\alpha q-2)}  \|u_j\|_q^{2q}\\
				&=
				b_n
				+ \frac{\a (q-2)}{2(\a q -2)} \ird \ird \log \left(1+\frac{2}{|x-y|}\right) |u_j(x)|^2|u_j(y)|^2\, dx \, dy
				+o_j(1).
			\end{split}
		\end{equation}
		Observe that by
		Hardy-Littlewood-Sobolev, Young, and Gagliardo-Nirenberg inequalities we have that for any $\eps>0$,
		\begin{equation}
			\label{secondestim}
			\begin{split}
				&	\ird \ird \log \left(1+\frac{2}{|x-y|}\right)|u_j(x)|^2|u_j(y)|^2\, dx \, dy\\
				&\le
				2 \ird \ird \frac{1}{|x-y|}|u_j(x)|^2|u_j(y)|^2\, dx \, dy\\
				%		(HLS)
				&\le
				C \|u_j\|_q^2 \|u_j\|_\frac{4q}{3q-4}^2\\
				%		(Young)
				&\le
				C
				\Big(\varepsilon^q \|u_j\|_q^{2q} 
				+ \frac{1}{\varepsilon^\frac{q}{q-1}}\|u_j\|_\frac{4q}{3q-4}^\frac{2q}{q-1}\Big)\\
				%		(Gagliardo)
				&\le
				C \varepsilon^q \|u_j\|_q^{2q} 
				+C \frac{1}{\varepsilon^\frac{q}{q-1}}
				\|\nabla u_j\|_2^\frac{4-q}{q-1}
				\|u_j\|_2 ^\frac{3q-4}{q-1}\\
				%		(Young)
				&\le
				%		C \varepsilon^q \|u_n\|_q^{2q}
				%		+C \frac{1}{\varepsilon^\frac{q}{q-1}}
				%		\left(
				%		\eta^\frac{2(q-1)}{4-q}\|\nabla u_n\|_2^2
				%		+\frac{1}{\eta^\frac{2(q-1)}{3(q-2)}}
				%		\|u_n\|_2^\frac{2(3q-4)}{3(q-2)}
				%		\right)\\
				%		(\eta=\varepsilon^2)
				%		&=
				C \varepsilon^q \|u_j\|_q^{2q}
				+C \varepsilon^\frac{(5q-2)(q-2)}{(4-q)(q-1)} \|\nabla u_j\|_2^2
				+C\frac{1}{\varepsilon^{\frac{q}{q-1}+\frac{4(q-1)}{3(q-2)}}}\|u_j\|_2^\frac{2(3q-4)}{3(q-2)}.
			\end{split}
		\end{equation}
		%\todo[inline]{To apply HLS we need to assume $q>2$ and $q<4$.\\
			%	To apply Y the first time we need to assume that $q>1$.\\
			%	To apply G we need to assume $\frac{4q}{3q-4}\geq 1$, namely $q>\frac{4}{3}$, and $0\le\theta<1$, namely $0\le\frac{4-q}{2q}<1$, namely $\frac{4}{3}<q\le 4$.\\
			%	To apply Y the second time we need to assume $2<q<4$.\\
			%	At the end we can take $\eta=\varepsilon^2$ with $\beta>\frac{q(4-q)}{2(q-1)^2}$}
	Thus, putting  \eqref{secondestim} in \eqref{starting},  taking into account \eqref{eq:controlL2}, and since
	\[
	\frac{\a (q-2)}{2(\a q -2)}>0,
	\]
we get
\begin{equation}
	\label{intermediate1}
	\begin{split}
		&\left(\frac{\a q-2-\a}{\a q -2} - \frac{\a (q-2)}{2(\a q -2)} C \varepsilon^\frac{(5q-2)(q-2)}{(4-q)(q-1)}\right) \|\nabla u_j\|_2^2
		\\
		&
		\quad+ \frac{1+2 \a (q-2)\log 2}{4(\a q -2)}  \|u_j\|_2^{4}
		- \frac{\a (q-2)}{2(\a q -2)} C\frac{1}{\varepsilon^{\frac{q}{q-1}+\frac{4(q-1)}{3(q-2)}}}\|u_j\|_2^\frac{2(3q-4)}{3(q-2)}
		\\
		&\le
		b_n
		+ \left( \frac{\a (q-2)}{2(\a q -2)} C \varepsilon^q +\frac{1}{2q(\alpha q-2)} \right) \|u_j\|_q^{2q}
		+o_j(1).
	\end{split}
\end{equation}
So, since $\a<0$ and $2<q<4$, we can take $\varepsilon$ sufficiently small such that 
	\[
	\frac{\a q-2-\a}{\a q -2} - \frac{\a (q-2)}{2(\a q -2)} C \varepsilon^\frac{(5q-2)(q-2)}{(4-q)(q-1)}>0
	\]
	and
	\[
	\frac{1}{2q(\alpha q-2)}  + \frac{\a (q-2)}{2(\a q -2)} C \varepsilon^q<0.
	\]
	Moreover, by \eqref{pluti},
	\[
	\frac{1+2 \a (q-2)\log 2}{4(\a q -2)}>0,
	\]
	and, being $q>8/3$,
	\[
	\frac{2(3q-4)}{3(q-2)}<4.
	\]
	%\todo[inline]{Qui usiamo:\\
		%	$\varepsilon$ small enough;\\
		%	\[
		%	\frac{\a q-2-\a}{\a q -2}>0
		%	\text{ namely } \alpha<\frac{2}{q} \text{ or } \alpha>\frac{2}{q-1};
		%	\]
		%	\[
		%	\frac{1+2 \a (q-2)\log 2}{4(\a q -2)}>0
		%	\text{ namely }
		%	\alpha < -\frac{1}{2(q-2)\log 2} \text{ or } \alpha>\frac{2}{q}
		%	\]
		%	and
		%	\[
		%	\frac{2(3q-4)}{3(q-2)}<4
		%	\text{ namely }
		%	q>\frac{8}{3}.
		%	\]
		%}
	Hence, by \eqref{intermediate1} we can conclude that $\{u_j\}$ is bounded in $H^1(\mathbb{R}^2)$.	
\end{proof}

	\begin{proof}[Proof of Theorem \ref{main}]

				By Lemma \ref{LemexCPS} and Lemma \ref{lebound}, for any  $n\ge 1$ there exists $\{u^n_j\}$ a sequence in $\X_r$ such that
				\begin{itemize}
					\item[$1.$] $I(u^n_j)\to b_n>0,$ as $j\to +\infty$
					\item[$2.$] $\|I'(u^n_j)\|_{\X_r'}(1+\|u^n_j\|_{\X_r})\to 0,$ as $j\to+\infty$
					\item[$3.$] $\{u_j^n\}$ is bounded (with respect to $j$) in $\Hr$.
				\end{itemize}
		By Lemma \ref{WC}, as $j\to +\infty$, the sequence $\{u^n_j\}$ strongly converges to some ${u^n}\in\X_r\setminus\{0\}$ which is a critical point of $I$. Since the levels $\{b_n\}$ diverge, as $n \to +\infty$, we deduce the multiplicity of solutions.
\\
Finally, the regularity follows by Proposition \ref{regola}.
	\end{proof}

	\section{The generalized problem \eqref{eqp}}\label{4}
	
	In this Section we study \eqref{eqp} for $1<p<q$ and $2p^2/(p+1)<q<2p$.
	
	At least formally, solutions to \eqref{eqp} are critical points of the following functional (still denoted by $I$, with an abuse of notation)
	\begin{equation*}
	I(u)= \|\nabla u\|_2^2
	+\frac{1}{2}V_{1,p}(u)
	-\frac{1}{2}V_{2,p}(u)
	-\frac{1}{q}V_{1,q}(u)
	+\frac{1}{q}V_{2,q}(u).
	\end{equation*}
	Then, for $p>1$, we define $\ch$ as the completion of $C_0^{\infty}(\RD)$ with respect to the norm $$\|\cdot\|_{\ch}=\sqrt{\|\n \cdot\|_2^2+\|\cdot\|_p^2}.$$

	In the following proposition, we study the embedding's properties of $\ch$  The proof is standard (see, for example, \cite[Proposition 2.5]{AP}).
	
	\begin{proposition}\label{pr:embedding}
		The space $\ch$ is continuously embedded into $L^s(\RD)$, for any $s\in [p,+\infty)$.
	\end{proposition}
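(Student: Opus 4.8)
The plan is to reduce the statement to the classical Gagliardo--Nirenberg interpolation inequality in dimension two, followed by a routine density argument. For $s=p$ the inclusion is immediate, since $\|u\|_p\le\|u\|_{\ch}$ directly from the definition of the norm; so I would fix $s\in(p,+\infty)$ and argue first for $u\in C_0^\infty(\RD)$, extending to all of $\ch$ at the very end.

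For $u\in C_0^\infty(\RD)$ I would invoke the Gagliardo--Nirenberg inequality with the interpolation parameter $\theta:=1-p/s\in(0,1)$. The scaling $u_\lambda=u(\lambda\,\cdot)$ leaves $\|\n u\|_2$ invariant (this is the borderline feature of the gradient in $L^2$ in dimension two) and forces the admissible exponent relation $1/s=(1-\theta)/p$, which is exactly the one above. Hence there is $C=C(p,s)>0$ with
\[
\|u\|_s\le C\,\|\n u\|_2^{1-\frac ps}\,\|u\|_p^{\frac ps}.
\]
The borderline character—namely that $\n\,\cdot\,\in L^2(\RD)$ does not control $\|\cdot\|_\infty$ in dimension two—is precisely what excludes the endpoint $s=+\infty$ and accounts for the half-open interval in the statement. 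Since $\|\n u\|_2\le\|u\|_{\ch}$ and $\|u\|_p\le\|u\|_{\ch}$, and the two exponents sum to one, this yields the linear bound
\[
\|u\|_s\le C\,\|u\|_{\ch}^{1-\frac ps}\,\|u\|_{\ch}^{\frac ps}=C\,\|u\|_{\ch}.
\]

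Finally I would remove the smoothness assumption by density. Given $u\in\ch$, choose $\{u_n\}\subset C_0^\infty(\RD)$ with $u_n\to u$ in $\ch$; by the estimate just obtained $\{u_n\}$ is Cauchy in $L^s(\RD)$, hence converges to some $\widetilde u\in L^s(\RD)$, and passing to the limit gives $\|\widetilde u\|_s\le C\,\|u\|_{\ch}$. The one point that deserves care—and which I regard as the only genuinely delicate step—is the identification of $\widetilde u$ with the abstract element $u$: since $u_n\to u$ also in $L^p(\RD)$ (again because $\|\cdot\|_p\le\|\cdot\|_{\ch}$) and $u_n\to\widetilde u$ in $L^s(\RD)$, both limits agree almost everywhere along a common subsequence, so the induced linear map $\ch\to L^s(\RD)$ is well defined, continuous, and injective, independently of the approximating sequence. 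Everything else is the direct interpolation above, which is why the result is standard; an alternative self-contained route would be to adapt the line-integral technique of Theorem \ref{th:emb} applied to the powers $|u|^m$, but the Gagliardo--Nirenberg argument is the cleaner one here.
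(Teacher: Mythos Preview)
Your proof is correct. The paper does not actually supply a proof of this proposition---it simply declares the result ``standard'' and refers to \cite[Proposition 2.5]{AP}---so there is no argument to compare against; your Gagliardo--Nirenberg interpolation followed by density is precisely the standard route one has in mind here.
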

	
%	
%	\begin{proof}
%		Going back the proof of the Sobolev inequality, if $u\in C_0^{\infty}(\RD)$, one has
%		\begin{equation} \label{eq:a.1}
%		\| u\|_{2} \le 
%		\left\| \frac{\partial u}{\partial x_1} \right\|_1^{\frac{1}{2}}
%		\left\| \frac{\partial u}{\partial x_2} \right\|_1^{\frac{1}{2}}.
%		\end{equation}
%		See \cite[(19), P.\,280]{B}.
%		Let $m \ge 3/2$. Applying \eqref{eq:a.1} to $|u|^{m-1}u$, we get
%		$$
%		\| u\|_{2m}^m
%		\le C 
%		\left\| |u|^{m-1} \frac{\partial u}{\partial x_1} \right\|_1^{\frac{1}{2}}
%		\left\| |u|^{m-1} \frac{\partial u}{\partial x_2} \right\|_1^{\frac{1}{2}}
%		\le C \| \nabla u\|_2 \| u\|_{2(m-1)}^{m-1}.$$
%		By the Young inequality, it follows that
%		\begin{equation} \label{eq:a.2}
%		\| u\|_{2m} \le C(  \| \nabla u\|_2+\| u\|_{2(m-1)} ).
%		\end{equation}
%		In \eqref{eq:a.2}, we first choose $2(m-1)=p$, that is, 
%		$m= (p+2)/2$. 
%		Thus from \eqref{eq:a.2}, we obtain
%		$$
%		\| u\|_{p+2}
%		\le C(  \| \nabla u\|_2+\| u\|_{p} )
%		=  C \| u\|_{\ch}.
%		$$
%		Iterating this procedure %with $m=3+j$ for $j \in \mathbb{N}$,
%		and applying the interpolation inequality, one gets
%		$$
%		\| u\|_s\le C \| u\|_{\ch}
%		\quad \hbox{for all} \ u\in C_0^{\infty}(\RD) 
%		\ \hbox{and} \ s\in[p,+\infty).$$
%		This completes the proof by a density argument.
%		
%	\end{proof}
	
	\begin{remark}\label{reH1}
		Observe that if $p=2$, then $\calh^{2,2}=\H$; while for $1<p< 2$, we have that $\ch \hookrightarrow \H$.
	\end{remark}
	
	In this section we will consider directly the radial case for the problems connected with the lack of compact embeddings and with geometrical properties of the functional, as already seen in Section \ref{funct}. So we consider $\chr$ the set of radial functions in $\ch$ and we prove the following Strauss type lemma.
	\begin{lemma}\label{straussp}
		Assume $p>2.$ Then for any $\tau\in\left(0,\frac 1p\right)$, there exists $C_\tau>0$ and $R_\tau>0$ such that, for all  $u\in \chr$, we have 
		\begin{equation*}
		|u(x)|\le C_\tau\frac{\|u\|_{\ch}}{|x|^\tau},\,\quad\hbox{  for } |x|\ge R_\tau.
		\end{equation*}
		Assume   $1<p\le 2$.  Then there exists $C>0$ and $R>0$ such that, for all  $u\in \chr$, we have 
		\begin{equation*}
		|u(x)|\le C\frac{\|u\|_{\ch}}{|x|^{\frac{6-p}8}},\,\quad\hbox{  for } |x|\ge R.
		\end{equation*}		
	\end{lemma}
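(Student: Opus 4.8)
The plan is to prove a single uniform radial estimate, namely
$|u(x)|\le C\,\|u\|_{\ch}\,|x|^{-2/(p+2)}$ for all $|x|\ge 1$ and all $u\in\chr$, and then to read off both displayed inequalities from a trivial comparison of exponents. Since $\chr$ is by definition the completion of the radial functions of $C_0^\infty(\RD)$, I would first establish the estimate for radial $u\in C_0^\infty(\RD)$ and afterwards pass to the completion: applying the bound to a difference $u_n-u_m$ shows that any Cauchy sequence $\{u_n\}$ for $\|\cdot\|_{\ch}$ is uniformly Cauchy on $\{|x|\ge 1\}$, so its limit in $\chr$ admits a continuous representative there obeying the same inequality. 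For a radial $u=u(r)$, $r=|x|$, choosing the exponent $\gamma:=1+\tfrac p2>1$, the fundamental theorem of calculus together with $u(r)\to 0$ gives $|u(r)|^{\gamma}\le \gamma\int_r^{+\infty}|u(s)|^{\gamma-1}|u'(s)|\,ds$.

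Next I would insert the radial weight $s$ and apply Cauchy--Schwarz in the form $\int_r^{+\infty}\big(|u'|s^{1/2}\big)\big(|u|^{\gamma-1}s^{-1/2}\big)\,ds\le \big(\int_r^{+\infty}|u'|^2 s\,ds\big)^{1/2}\big(\int_r^{+\infty}|u|^{2\gamma-2}s^{-1}\,ds\big)^{1/2}$. The first factor is controlled by $\|\n u\|_2$, since $\int_0^{+\infty}|u'|^2 s\,ds=\tfrac1{2\pi}\|\n u\|_2^2$. The choice $\gamma=1+p/2$ is made precisely so that $2\gamma-2=p$; then, using $s^{-1}\le r^{-2}s$ for $s\ge r$, one gets $\int_r^{+\infty}|u|^{p}s^{-1}\,ds\le r^{-2}\int_r^{+\infty}|u|^{p}s\,ds\le \tfrac{r^{-2}}{2\pi}\|u\|_p^{p}$. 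Combining the two factors gives $|u(r)|^{1+p/2}\le C\,r^{-1}\|\n u\|_2\,\|u\|_p^{p/2}$, and since $\|\n u\|_2\le\|u\|_{\ch}$ and $\|u\|_p\le\|u\|_{\ch}$ directly from the definition of the norm, taking the $(1+p/2)$-th root yields $|u(r)|\le C\,\|u\|_{\ch}\,r^{-2/(p+2)}$ for $r\ge 1$.

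Finally I would deduce the two stated regimes from this single bound. If $p>2$ then $2/(p+2)>1/p$, so for any $\tau\in(0,1/p)$ one has $r^{-2/(p+2)}\le r^{-\tau}$ whenever $r\ge 1$, and the first inequality holds with $R_\tau=1$. If $1<p\le 2$ then the elementary identity $16-(6-p)(p+2)=(p-2)^2\ge 0$ gives $2/(p+2)\ge (6-p)/8$, so $r^{-2/(p+2)}\le r^{-(6-p)/8}$ for $r\ge 1$, and the second inequality follows with $R=1$. The only genuinely delicate point is the passage from $C_0^\infty(\RD)$ to the abstract completion $\chr$, i.e. giving a pointwise meaning to $u\in\chr$ on $\{|x|\ge 1\}$, which is exactly what the uniform-Cauchy argument supplies. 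One could instead treat the two regimes separately, using the $L^p$-based radial estimate when $p>2$ and, when $p\le 2$, the embedding $\chr\hookrightarrow\Hr$ interpolated against the $L^p$ norm; but the uniform estimate above already subsumes both.
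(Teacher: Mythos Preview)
Your proof is correct, and it is genuinely different from the paper's argument.

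The paper treats the two regimes separately. For $p>2$ it differentiates $r^{k}u^{2}(r)$ with $k\in(0,2/p)$, integrates from $r$ to $+\infty$, and applies H\"older with exponents $p/2$ and $p/(p-2)$ against the $L^{p}$ norm; this yields $|u(r)|\le C_\tau\|u\|_{\ch}r^{-\tau}$ for each $\tau=k/2\in(0,1/p)$. For $1<p\le2$ the paper instead uses the continuous embedding $\chr\hookrightarrow H^{1}_{r}(\RD)$ (available because $p\le2$), invokes the classical Strauss lemma to control $|u|^{2-p}$ pointwise, and combines this with the $L^{p}$ norm to reach the exponent $(6-p)/8$.

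Your route is more direct: by working with $|u|^{1+p/2}$ and the Cauchy--Schwarz splitting $|u'|s^{1/2}\cdot |u|^{p/2}s^{-1/2}$, together with the elementary bound $s^{-1}\le r^{-2}s$ for $s\ge r$, you obtain the single estimate $|u(r)|\le C\|u\|_{\ch}r^{-2/(p+2)}$ valid for every $p>1$. Since $2/(p+2)>1/p$ when $p>2$ and $2/(p+2)\ge(6-p)/8$ when $1<p\le2$ (both checked in your final paragraph), this covers both regimes at once. Compared with the paper, your argument is shorter, gives a strictly sharper decay exponent in each regime, and avoids the auxiliary embedding $\chr\hookrightarrow H^{1}_{r}(\RD)$ in the case $p\le2$. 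The paper's method, on the other hand, makes transparent how the range of admissible exponents $\tau$ arises from integrability of the weight, which is not visible in your approach. Your density passage to the completion $\chr$ via uniform Cauchy convergence on $\{|x|\ge1\}$ is the same in spirit as the paper's one-line ``density argument''.
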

	\begin{proof}
		Assume $p>2$. 
		Let $k\in\left(0,\frac 2p\right)$ and consider $u$ a radial function in $C_0^\infty(\RD)$.
		For any $r\ge 0$, we have that 
		\begin{align}\label{derivative}
		\left|\frac{d}{dr}\left(r^ku^2(r)\right)\right|&\le k r^{k-1}u^2(r)+2r^k|u(r)||u'(r)|\nonumber\\
		&\le k r^{k-1}u^2(r) + r^{2k-1}u^2(r) + r|u'(r)|^2. 
		\end{align}
		Now, fix $r\ge 1$ and integrate $-\frac{d}{ds}\left(s^ku^2(s)\right)$ in the interval $[r,+\infty)$. We have
		\begin{align*}
		r^ku^2(r)&\le k\int_r^{+\infty} s^{k-\frac {p+2}p}s^{\frac 2p}u^2(s)\, ds+\int_r^{+\infty}s^{2k-\frac {p+2}p}s^{\frac 2p}u^2(s)\, ds+ \frac{\|\n u\|_2^2} {2\pi}\\
		&\le \frac k{\sqrt[p]{4\pi^2}} \left(\int_r^{+\infty} s^{\frac{pk-p-2}{p-2}}\,ds\right)^{\frac{p-2}{p}}\|u\|_p^2\\
		&\quad+\frac 1{\sqrt[p]{4\pi^2}}\left(\int_r^{+\infty} s^\frac{2pk-p-2}{p-2}\,ds\right)^{\frac{p-2}{p}}\|u\|_p^2+\frac{\|\n u\|_2^2} {2\pi}\\
		&\le C (r^{\frac{pk-4}{p}}+r^\frac{2pk-4}{p})\|u\|_p^2+\frac{\|\n u\|_2^2} {2\pi}
		\le C \|u\|_{\ch}^2.
		\end{align*}
		\\
		Now assume $1<p\le 2$ . If $p=2$ the inequality is nothing but a consequence of the well known Strauss radial lemma since $\calh^{2,2}_r=\Hr$.  If  $1<p<2$, since by Remark \ref{reH1} we know that  $\chr\hookrightarrow \Hr$,  using again \eqref{derivative}, Strauss radial lemma implies that
		\begin{align*}
		r^\frac{6-p}4u^2(r)&\le \frac{6-p}4\int_r^{+\infty} s^{\frac{2-p}4}|u(s)|^{2-p}|u(s)|^p\, ds\\
		&\quad+\int_r^{+\infty}s^{\frac{4-p}2}|u(s)|^{2-p}|u(s)|^p\, ds+ \frac{\|\n u\|_2^2} {2\pi}\\
		&\le C\|u\|^{2-p}_{H^1}\int_r^{+\infty} (s^{-\frac{p+2}4}+s^{-\frac{p-2}2})\frac{ s|u(s)|^p}{s^{\frac{2-p}{2}}}\,ds+\frac{\|\n u\|_2^2} {2\pi}\\
		&\le  \frac {C\|u\|^{2-p}_{H^1}}{2\pi }(1+r^{\frac{p-6}4})\|u\|_p^p+\frac{\|\n u\|_2^2} {2\pi}
		\le C\|u\|^2_{\ch}. 
		\end{align*}
In both the cases the conclusion follows easily by a density argument. 
	\end{proof}
	%
	%\begin{remark}\label{reXH}
	%Since $\|\cdot\|_p\le C\|\cdot\|_{*,2}$, we have that $\X\hookrightarrow \ch$.
	%\end{remark}

	As a consequence of the previous lemma,
	$\chr$ is continuously embedded into $L^q_{\log(2+|\cdot|)}(\RD)$. More in general, the following holds
	
	\begin{lemma}\label{embHrp}
		For all $\tau\in (p,+\infty)$,  there exists a constant $C$ such that for any $u\in \chr$,
		\begin{equation*}
		\|u\|_{*,\tau}^\tau
		\le C\left(\|u\|_\tau^\tau+\|u\|^{\tau-p}_{\ch}\|u\|_p^p\right)\le C \|u\|^\tau_{\ch}.
		\end{equation*}
		Then the space $\chr$ is continuously embedded into $L^\tau_{\log(2+|\cdot|)}(\RD)$, for any $\tau\in (p,+\infty)$.
	\end{lemma}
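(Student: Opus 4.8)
The plan is to mirror the argument used for Lemma \ref{embHr} (the case $p=2$), splitting the integral defining $\|u\|_{*,\tau}^\tau=\ird\log(2+|x|)|u|^\tau\,dx$ over a ball and its complement, where the radius is the one produced by the Strauss-type estimate of Lemma \ref{straussp}, and treating the two regions separately. First I would fix the decay rate coming from that lemma. In the case $p>2$ I choose once and for all some $\sigma\in(0,1/p)$, so that Lemma \ref{straussp} gives $C_\sigma,R_\sigma>0$ with $|u(x)|\le C_\sigma\|u\|_{\ch}|x|^{-\sigma}$ for $|x|\ge R_\sigma$; in the case $1<p\le 2$ I set $\sigma=(6-p)/8>0$ and take the corresponding $R$ from the same lemma. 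In either regime the only feature I will actually use is that $\sigma>0$.

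On the inner region the weight is harmless: since $\log(2+|x|)\le\log(2+R)$ there, one has $\iR\log(2+|x|)|u|^\tau\,dx\le\log(2+R)\,\|u\|_\tau^\tau$. On the outer region I would write $|u|^\tau=|u|^{\tau-p}|u|^p$ and insert the pointwise decay into the first factor, obtaining
\[
\iRc\log(2+|x|)|u|^\tau\,dx\le C\|u\|_{\ch}^{\tau-p}\iRc\frac{\log(2+|x|)}{|x|^{\sigma(\tau-p)}}|u|^p\,dx.
\]
This is the step that carries the whole lemma, and the point to verify is that the combined weight is bounded: since $\tau>p$ and $\sigma>0$, the exponent $\sigma(\tau-p)$ is strictly positive, so $\log(2+|x|)/|x|^{\sigma(\tau-p)}$ is continuous on $\{|x|\ge R\}$ and tends to $0$ at infinity, hence is bounded there. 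Pulling out its supremum leaves $\iRc|u|^p\,dx\le\|u\|_p^p$, so the outer part is controlled by $C\|u\|_{\ch}^{\tau-p}\|u\|_p^p$.

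Adding the two contributions yields the first inequality $\|u\|_{*,\tau}^\tau\le C\big(\|u\|_\tau^\tau+\|u\|_{\ch}^{\tau-p}\|u\|_p^p\big)$. For the second inequality I would bound the second summand using $\|u\|_p\le\|u\|_{\ch}$, and the first summand using the continuous embedding $\ch\hookrightarrow L^\tau(\RD)$ of Proposition \ref{pr:embedding}, so that both terms are dominated by $C\|u\|_{\ch}^\tau$. Since $\|u\|_{*,\tau}$ is precisely the norm of $L^\tau_{\log(2+|\cdot|)}(\RD)$, this estimate says that the inclusion $\chr\hookrightarrow L^\tau_{\log(2+|\cdot|)}(\RD)$ is bounded, which is the asserted continuous embedding.

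I do not expect a genuine obstacle here: the essential work is already contained in the Strauss-type Lemma \ref{straussp}, and the present statement is a routine consequence of it. The only point requiring a little care is the boundedness of the logarithmic-over-polynomial weight in the outer region, which rests entirely on the strict positivity of $\sigma(\tau-p)$ guaranteed by $\tau>p$ and $\sigma>0$; this is what makes the coercive logarithmic weight harmless against the algebraic decay of radial functions.
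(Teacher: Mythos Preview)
Your proposal is correct and follows essentially the same route as the paper: split at the radius provided by Lemma \ref{straussp}, bound the inner part by $\|u\|_\tau^\tau$, and on the outer part factor $|u|^\tau=|u|^{\tau-p}|u|^p$ and absorb the logarithmic weight using the positive exponent $\sigma(\tau-p)$. The paper's proof is the same computation (with $\rho$ in place of your $\sigma$), written more tersely and with the splitting radius taken equal to $1$; your version is in fact slightly more careful in using the radius actually produced by Lemma \ref{straussp}.
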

	
	\begin{proof}
		Let $u\in \chr$. Using Lemma \ref{straussp}, the claim  follows observing that 
		\begin{align*}
		%				V_{2,k}(u)&\le \frac 1 {2\pi} \ird \ird \frac 1{|x-y|}|u(x)|^k|u(y)|^k\,dx\,dy\le C_0 \|u\|_{\frac{4k}3}^{2k}\le C_1\|u\|^{2k}_{2,p},\\
		%				V_{1,p}(u)&\le \frac 1{2\pi}\ird\ird (\log(2+|x|)+\log(2+|y|))|u(x)|^p|u(y)|^p\,dx\,dy\\
		%				&=\frac 1 \pi \|u\|_{*,p}^p\|u\|_p^p\le \frac 1{\pi\log 2}\|u\|_{*,p}^{2p}\le C\|u\|^{2p}_{2,p},\\
		\|u\|_{*,\tau}^\tau&
		\le  C  \left(\int_{B_1} |u|^\tau\, dx+\|u\|^{\tau-p}_{\ch}\int_{\RD\setminus B_1} \frac{\log(2+|x|)}{|x|^{\rho(\tau-p)}}|u|^p \, dx\right)\\
		&\le C \big(\|u\|_\tau^\tau+\|u\|^{\tau-p}_{\ch}\|u\|_p^p\big)\le C \|u\|^\tau_{\ch}
		\end{align*}
		where, in the last inequalities, $\rho\in (0,1/p)$, if $p>2$ and $\rho=(6-p)/8$, if $1<p\le 2$.
	\end{proof}
	
	By the previous lemma, we see that all the terms of the functional $I$ are well defined in $\chr$ except $V_{1,p}$. For this reason we introduce the space $\X^{2,p}$ obtained as the completion of $C_0^{\infty}(\RD)$ with respect to the norm $$\|\cdot\|_{\X^{2,p}}:=
	\sqrt{\|\n \cdot\|_2^2+\|\cdot\|_{*,p}^2},$$
	where $\|\cdot\|_{*,p}$ is defined in \eqref{norma*k}.
	Clearly, $\X^{2,p}$ is continuously embedded into $\ch$. In addition, we denote by $\X_r^{2,p}$ the subspace of radial functions in $\X^{2,p}$ and it is compactly embedded into $L^\tau(\RD)$ and continuously embedded into $L^\tau_{\log(2+|\cdot|)}(\RD)$, for $\tau\ge p$.

	The following lemma shows that the functional $I$ is well defined  in $\X_r^{2,p}$.
	\begin{lemma}\label{levikp}
		For every $u\in \X_r^{2,p}$, the following inequalities hold
		\begin{align}
		0\le 	V_{1,p}(u)&\le C \|u\|_{*,p}^p\|u\|_p^p,\label{v1p}
		\\
		0\le 	V_{2,k}(u)&\le C \|u\|_{\frac{4k}3}^{2k}, \quad
		\hbox{ for }k=p,q,\label{v2k}
		\\
		0\le 	V_{1,q}(u) &\le   C\|u\|_q^q  \big(\|u\|_q^q+\|u\|^{q-p}_{\ch}\|u\|_p^p\big).\label{v1q}
		\end{align}
		In particular, there exists a positive constant $C>0$ such that,
		for every $u\in \X_r^{2,p}$, we have
		\begin{equation*}
		0\le 	V_{i,k}(u)\le C\|u\|^{2k}_{\cx}, \hbox{ for }k=p,q \hbox{ and } i=1,2.
		\end{equation*}		
	\end{lemma}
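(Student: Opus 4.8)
The plan is to follow closely the scheme already carried out for Proposition \ref{levikx} and Proposition \ref{cov1}, adapting the bookkeeping to the exponent $p$ that now sits inside the norm of $\cxr$. First I would dispose of the nonnegativity at once: both kernels $\log(2+|x-y|)$ and $\log(1+2/|x-y|)$ are pointwise positive, so $V_{1,k}(u)\ge 0$ and $V_{2,k}(u)\ge 0$ for every $u$ and every admissible $k$. For the upper bound \eqref{v1p} I would use the elementary logarithmic subadditivity $\log(2+|x-y|)\le \log(2+|x|)+\log(2+|y|)$, which follows from $2+|x-y|\le (2+|x|)(2+|y|)$; inserting it into the double integral and separating the variables gives $V_{1,p}(u)\le 2\|u\|_{*,p}^p\|u\|_p^p$, which is exactly \eqref{v1p}. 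The very same splitting applied to the degree-$q$ term produces the preliminary bound $V_{1,q}(u)\le 2\|u\|_{*,q}^q\|u\|_q^q$, to be refined below.

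For \eqref{v2k} I would pass to the Riesz kernel via $\log(1+2/|x-y|)\le 2/|x-y|$ (using $\log(1+t)\le t$), so that $V_{2,k}(u)\le 2\ird\ird |x-y|^{-1}|u(x)|^k|u(y)|^k\,dx\,dy$, and then apply the Hardy--Littlewood--Sobolev inequality in $\RD$ with $\lambda=1$ and conjugate exponent $4/3$ (since $\tfrac34+\tfrac34=\tfrac32=2-\tfrac{\lambda}{2}$). This yields $V_{2,k}(u)\le C\|u\|_{4k/3}^{2k}$ for both $k=p$ and $k=q$, reproducing the computation of Proposition \ref{levikx}.

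The only genuinely new point is \eqref{v1q}, and this is where I expect the main obstacle. The weighted seminorm $\|u\|_{*,q}$ carries the exponent $q>p$, which is not part of the $\cxr$-norm and cannot be controlled by it directly. To handle it I would invoke the radial weighted embedding Lemma \ref{embHrp} with $\tau=q$, which — through the Strauss-type decay of Lemma \ref{straussp} — provides $\|u\|_{*,q}^q\le C\big(\|u\|_q^q+\|u\|_{\ch}^{q-p}\|u\|_p^p\big)$. Substituting this into $V_{1,q}(u)\le 2\|u\|_{*,q}^q\|u\|_q^q$ gives precisely \eqref{v1q}. It is worth emphasizing that the radial restriction is essential here: exactly as the $V_{1,q}$ term plays the borderline role throughout Section \ref{3}, controlling a weighted $L^q$ norm of too high an order requires symmetry plus the Strauss decay rather than the plain $\cxr$-norm.

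Finally, for the unified estimate $V_{i,k}(u)\le C\|u\|_{\cx}^{2k}$ I would simply collect exponents. Each factor on the right-hand side of \eqref{v1p}--\eqref{v1q} is bounded by a power of $\|u\|_{\cx}$: one has $\|u\|_{*,p}\le\|u\|_{\cx}$ directly from the definition of the norm, while $\|u\|_{\ch}\le C\|u\|_{\cx}$ by the continuous embedding $\cxr\hookrightarrow\ch$, and $\|u\|_s\le C\|u\|_{\cx}$ for all $s\ge p$ by Proposition \ref{pr:embedding} together with the embeddings of $\cxr$ recorded after Lemma \ref{embHrp}. Counting the total homogeneity of each product — $p+p$ in \eqref{v1p}, $2k$ in \eqref{v2k}, and $q+\max\{q,(q-p)+p\}=2q$ in \eqref{v1q} — shows that every bound is of order $2k$ in $\|u\|_{\cx}$, which is the claimed conclusion.
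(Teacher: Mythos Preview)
Your proposal is correct and follows exactly the approach of the paper: \eqref{v1p} and \eqref{v2k} are obtained by the same logarithmic splitting and Hardy--Littlewood--Sobolev argument used in Proposition \ref{levikx}, while \eqref{v1q} is deduced from $V_{1,q}(u)\le 2\|u\|_{*,q}^q\|u\|_q^q$ combined with Lemma \ref{embHrp}, precisely as the paper indicates. The final uniform bound is a bookkeeping of embeddings that the paper leaves implicit, and your treatment of it is also correct.
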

	\begin{proof}
		Arguing as in Proposition \ref{levikx} we get \eqref{v1p} and \eqref{v2k}, while  \eqref{v1q} follows by Lemma \ref{embHrp}.
		%			\begin{align*}
		%				V_{2,k}(u)&\le \frac 1 {2\pi} \ird \ird \frac 1{|x-y|}|u(x)|^k|u(y)|^k\,dx\,dy\le C_0 \|u\|_{\frac{4k}3}^{2k}\le C_1\|u\|^{2k}_{2,p},\\
		%				V_{1,p}(u)&\le \frac 1{2\pi}\ird\ird (\log(2+|x|)+\log(2+|y|))|u(x)|^p|u(y)|^p\,dx\,dy\\
		%				&=\frac 1 \pi \|u\|_{*,p}^p\|u\|_p^p\le \frac 1{\pi\log 2}\|u\|_{*,p}^{2p}\le C\|u\|^{2p}_{2,p},\\
	\end{proof}

	Moreover, arguing again as in \cite[Lemma 2.2]{CW}, the functional $I$ is of class $C^1$ in $\cxr$, the analogous of Lemmas \ref{lemma2.1} and \ref{lemma2.6} hold, and $I$ possesses nice geometry properties in $\cxr$ (see Proposition \ref{geo}).
In analogy with \eqref{bn} we will introduce minimax levels $b_n$.
	
As  in Section \ref{3},  we define $J_\a:\cxr\to \R$ as follows
	\begin{align*}
	J_\alpha(u)
	&:=
	2\alpha  \| \nabla u \|_2^2 
	+ 2 \frac{(\alpha p -2)}{p} \ird \ird \log (|x-y|)|u(x)|^p|u(y)|^p\, dx \, dy
	-\frac {1}{p} \| u \|_p^{2p}\\
	&\qquad
	-\frac{2(\alpha q-2)}{q} \ird \ird \log (|x-y|)|u(x)|^q |u(y)|^q \, dx \, dy 
	+\frac {1}{q} \|u\|_q^{2q}.
	\end{align*}
	
	Then, as in Lemmas \ref{LemexCPS}  and \ref{lebound}, we have
	\begin{lemma}\label{LemexCPSp}
If $1<p<q$ and $2p^2/(p+1)<q<2p$, there exists $\a<0$ such that, for any $n\ge 1$, there exists a sequence $\{u_{j}\}$ in $\cxr$, bounded in $\ch$, such that, up to a subsequence,	
\begin{itemize}
\item[$1.$]
$		I(u_{j})\to b_n$,
\item[$2.$]
	$\|I'(u_{j})\|_{{(\cx)}'}\left(1+\|u_{j}\|_{\cxr}\right)\to 0$,
\item[$3.$]
$		J_\alpha(u_{j}) \to 0$,		
\end{itemize}
as  $j\to+\infty$.
	\end{lemma}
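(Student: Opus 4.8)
The plan is to transplant, word for word in the setting of $\cxr$ and $\ch$, the two-step mechanism already used for $p=2$ in Lemmas \ref{LemexCPS} and \ref{lebound}: first an abstract minimax construction producing, for each $n$, a Cerami-type sequence that carries the extra datum $J_\alpha(u_j)\to0$, and then a quantitative estimate that uses this datum to force boundedness in $\ch$. I would therefore work on $\mathbb{R}\times\cxr$ with the product norm, introduce the dilation $\rho_\alpha(s,v):=e^{\alpha s}v(e^s\cdot)$ and $\varphi_\alpha(s,v):=I(\rho_\alpha(s,v))$, and check, exactly as in \eqref{partialsphi}--\eqref{partialvphi}, that $\varphi_\alpha\in C^1$, that $\partial_s\varphi_\alpha(s,v)=J_\alpha(\rho_\alpha(s,v))$ and that $\partial_v\varphi_\alpha(s,v)[w]=I'(\rho_\alpha(s,v))[\rho_\alpha(s,w)]$; the only changes relative to Section \ref{3} are the replacement of the scaling exponents and of $\|v\|_2^4$ by $\|v\|_p^{2p}$. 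Since $I$ is $C^1$ on $\cxr$ and has the same geometry as in Proposition \ref{geo}, the levels $b_n\ge\mu>0$ are well defined.

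For the \emph{existence} of the sequence I would fix $\alpha\in\R$ and copy the scheme of Lemma \ref{LemexCPS}: the augmented minimax levels coincide with $b_n$ (the proof of Lemma \ref{bb} uses only the two scaling identities above and the bijection $\tilde\gamma=(s,\eta)\mapsto e^{\alpha s}\eta(e^s\cdot)$ between admissible classes, which is insensitive to $p$), and since on $\partial D_n$ one has $\varphi_\alpha(0,\g_{0,n}(\s))=I(\g_{0,n}(\s))<0<\mu\le b_n$, Proposition \ref{LWProp28} applied with $X=\mathbb{R}\times\cxr$, $M=D_n$, $M_0=\partial D_n$ yields $(s_j,v_j)$ with $\varphi_\alpha(s_j,v_j)\to b_n$, $\operatorname{dist}\big((s_j,v_j),\tilde\gamma_j(D_n)\big)\to0$ and $(1+\|(s_j,v_j)\|)\|\varphi_\alpha'(s_j,v_j)\|\to0$. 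Setting $u_j:=\rho_\alpha(s_j,v_j)$ and arguing through the analogue of \eqref{phi'} gives $s_j\to0$ and items $1.$--$3.$, namely $I(u_j)\to b_n$, $J_\alpha(u_j)\to0$ and $(1+\|u_j\|_{\cxr})\|I'(u_j)\|_{(\cx)'}\to0$.

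The \emph{boundedness} is the heart of the matter and the place where $2p^2/(p+1)<q<2p$ is used. Fixing $\alpha<0$ with $\alpha<-\tfrac{1}{2(q-p)\log2}$, I would form $I(u_j)-\tfrac{1}{2(\alpha q-2)}J_\alpha(u_j)=b_n+o_j(1)$, which is tailored to annihilate the competing term $V_{0,q}$; after writing $V_{0,p}=V_{1,p}-V_{2,p}$ via \eqref{Vkrel} it leaves, on the left, a positive multiple of $\|\n u_j\|_2^2$, the term $\tfrac{\alpha(q-p)}{p(\alpha q-2)}V_{1,p}(u_j)$ (positive coefficient, since $q>p$ and $\alpha<0$), a $\|u_j\|_p^{2p}$ term and a $\|u_j\|_q^{2q}$ term, while the right-hand side carries only $b_n$, $o_j(1)$ and a positive multiple of $V_{2,p}(u_j)$. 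To control $V_{2,p}$ I would use $V_{2,p}(u_j)\le 2\,\ird\ird|x-y|^{-1}|u_j(x)|^p|u_j(y)|^p\,dx\,dy$ and the Hardy--Littlewood--Sobolev inequality in the non-symmetric form $\le C\|u_j\|_q^p\,\|u_j\|_{2pq/(3q-2p)}^p$, whose conjugate exponent $2q/(3q-2p)$ exceeds $1$ exactly when $q<2p$; a Gagliardo--Nirenberg interpolation $\|u_j\|_{2pq/(3q-2p)}\le C\|\n u_j\|_2^{\t}\|u_j\|_p^{1-\t}$ followed by a three-parameter Young inequality then bounds it by $\e\|u_j\|_q^{2q}+\e\|\n u_j\|_2^2+C_\e\|u_j\|_p^{\b}$ with $\b=\tfrac{2p(3q-2p)}{q(p+4)-2p(p+1)}$. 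A direct computation shows $\b<2p$ \emph{precisely} when $q>2p^2/(p+1)$, so that, using the lower bound $V_{1,p}(u_j)\ge(\log2)\|u_j\|_p^{2p}$ (the analogue of \eqref{eq:controlL2}), the total coefficient of $\|u_j\|_p^{2p}$ becomes positive — this is where the choice $\alpha<-\tfrac{1}{2(q-p)\log2}$ is needed — and dominates the subcritical term $\|u_j\|_p^{\b}$. Taking $\e$ small absorbs the $\|\n u_j\|_2^2$ and $\|u_j\|_q^{2q}$ contributions into the corresponding terms already present, leaving a coercive inequality in $\|\n u_j\|_2$ and $\|u_j\|_p$, i.e. boundedness in $\ch$. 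The delicate step, as in Lemma \ref{lebound}, is the bookkeeping of the Young exponents: the two constraints $q<2p$ and $q>2p^2/(p+1)$ are exactly what guarantee, respectively, the admissibility of the HLS/Gagliardo--Nirenberg splitting and the subcriticality $\b<2p$ of the residual power of $\|u_j\|_p$.
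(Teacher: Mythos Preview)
Your proposal is correct and follows essentially the same route as the paper, which in Section \ref{4} simply refers back to Lemmas \ref{LemexCPS} and \ref{lebound}. In particular, your combination $I(u_j)-\tfrac{1}{2(\alpha q-2)}J_\alpha(u_j)$, the choice $\alpha<-\tfrac{1}{2(q-p)\log 2}$, the HLS--Gagliardo--Nirenberg--Young chain applied to $V_{2,p}$, and the residual exponent $\beta=\tfrac{2p(3q-2p)}{q(p+4)-2p(p+1)}$ (with $\beta<2p\Leftrightarrow q>2p^2/(p+1)$ and the admissibility of the HLS splitting requiring $q<2p$) match exactly the computations behind the paper's proof.
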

	
	Hence we can conclude obtaining the following existence result in the general case.

	\begin{theorem}\label{mainp}
		If $1<p<q$ and $2p^2/(p+1)<q<2p$, then there exist   infinitely many solutions to problem \eqref{eqp}.
	\end{theorem}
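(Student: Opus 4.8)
The plan is to follow exactly the same architecture developed for Theorem~\ref{main} in Section~\ref{3}, now transplanted to the functional setting $\cxr\hookrightarrow\chr$ built in Section~\ref{4}. All the preliminary machinery is already in place: by Lemma~\ref{levikp} the functional $I$ is well defined on $\cxr$; by the remark following that lemma $I\in C^1(\cxr,\R)$; the analogues of Lemmas~\ref{lemma2.1} and \ref{lemma2.6} hold; and the Strauss-type estimate of Lemma~\ref{straussp} together with Lemma~\ref{embHrp} plays the role that Lemma~\ref{strauss} and Lemma~\ref{embHr} played before. First I would record the geometry of $I$ on $\cxr$, reproducing Proposition~\ref{geo} verbatim with $p$ in place of the exponent $2$: near the origin one combines \eqref{v2k}, Gagliardo--Nirenberg and Young to absorb $V_{2,p}$, while the lower bound $V_{1,p}(u)\ge \log 2\,\|u\|_p^{2p}$ (the $p$-analogue of \eqref{eq:controlL2}) coerces the principal part; to push the functional below zero one scales $u_{\sigma t}:=tu_\sigma(\cdot/t)$ and checks that the competing term $V_{0,q}$ eventually dominates. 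This produces the minimax levels $b_n$ exactly as in \eqref{bn}.

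Next I would transport the whole augmented-dimension apparatus. With $\rho_\alpha(s,v):=e^{\alpha s}v(e^s\cdot)$ acting on $\tilde{\cxr}:=\R\times\cxr$ and $\varphi_\alpha:=I\circ\rho_\alpha$, the chain-rule identities \eqref{partialsphi} and \eqref{partialvphi} go through unchanged because $\rho_\alpha$ is still linear in $v$ and the $J_\alpha$ defined in this section is precisely $\partial_s\varphi_\alpha$ evaluated along $\rho_\alpha$. Invoking the abstract Proposition~\ref{LWProp28} (Proposition~2.8 of~\cite{LW}) on $X=\tilde{\cxr}$, $M=D_n$, $M_0=\partial D_n$, exactly as in the proof of Lemma~\ref{LemexCPS}, yields for each $n$ a sequence $\{u_j\}\subset\cxr$ with $I(u_j)\to b_n$, $\|I'(u_j)\|_{(\cx)'}(1+\|u_j\|_{\cxr})\to0$ and $J_\alpha(u_j)\to0$; this is the content of Lemma~\ref{LemexCPSp}, whose proof is a transcription of Lemma~\ref{LemexCPS}. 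Boundedness in $\chr$ of such a sequence then follows from the $p$-version of Lemma~\ref{lebound}: one forms $I(u_j)-\tfrac{1}{2(\alpha q-2)}J_\alpha(u_j)$, rewrites the $V_{0,p}$ term via \eqref{Vkrel}, controls the resulting $V_{2,p}$ integral by Hardy--Littlewood--Sobolev, Young and Gagliardo--Nirenberg as in \eqref{secondestim}, and chooses $\alpha<0$ together with a small $\varepsilon$ so that the coefficients of $\|\n u_j\|_2^2$, $\|u_j\|_p^{2p}$ and $\|u_j\|_q^{2q}$ all carry favourable signs.

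Finally I would establish the weak Cerami condition for $I$ with respect to $\cxr$ and $\chr$ in $(\delta,+\infty)$ and $(-\infty,-\delta)$, mirroring Lemma~\ref{WC}: a weakly convergent Cerami sequence, bounded in $\chr$, has a nontrivial weak limit (otherwise compactness of the embedding $\cxr\hookrightarrow L^\tau(\RD)$ forces $I(u_j)\to0$, contradicting $|I(u_j)|\ge\delta$); the $p$-analogue of Lemma~\ref{lemma2.1} then bounds $\|u_j\|_{*,p}$, giving boundedness in $\cxr$; and the difference $I'(u_j)[u_j-u]$ is shown to be $o_j(1)$ term by term, using the Strauss estimate of Lemma~\ref{straussp} to handle $V_{1,q}'$ and Lemma~\ref{lemma2.6} for the monotone $V_{1,p}'$ piece, whence strong convergence. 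Combining the bounded Cerami sequence from Lemma~\ref{LemexCPSp} with this convergence yields for each $n$ a nontrivial critical point $u^n$ at level $b_n$; since $b_n\to+\infty$ the solutions are infinitely many and geometrically distinct. The main obstacle I anticipate is the boundedness step: the admissible window $2p^2/(p+1)<q<2p$ must be exactly what makes the exponent $\tfrac{2(3q-4)}{3(q-2)}$-type quantity (now in its $p$-dependent form) stay below the power carried by $\|u_j\|_p^{2p}$, so that \eqref{eq:controlL2}'s analogue closes the estimate; verifying that the sign conditions on the three coefficients can be met simultaneously for a single $\alpha<0$ is the delicate arithmetic that pins down the hypotheses on $(p,q)$.
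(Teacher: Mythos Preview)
Your proposal is correct and follows essentially the same route as the paper: Section~\ref{4} itself presents Theorem~\ref{mainp} as a direct transcription of the Section~\ref{3} scheme into the $\cxr\hookrightarrow\chr$ setting, and your outline reproduces precisely that transplant---geometry via the $p$-analogue of Proposition~\ref{geo}, the augmented functional $\varphi_\alpha$ and Lemma~\ref{LemexCPSp} in place of Lemmas~\ref{LemexCPS}--\ref{lebound}, and the weak Cerami argument of Lemma~\ref{WC} with Lemma~\ref{straussp} replacing Lemma~\ref{strauss}. Your identification of the boundedness step as the place where the window $2p^2/(p+1)<q<2p$ enters is also accurate.
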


\

\noindent{\bf Acknoledgements.}
The authors thank the referee for the valuable suggestions and helpful comments which further 
improve the content and presentation of the paper. 
\\
The authors are partially supported by  INdAM - GNAMPA Project 2023 ``Metodi variazionali per
alcune equazioni di tipo Choquard".
P.D. and A. P. are partially financed by PRIN 2017JPCAPN ``Qualitative and quantitative aspects of nonlinear PDEs'' and European Union - Next Generation EU - PRIN 2022 PNRR ``P2022YFAJH Linear and Nonlinear PDE's: New directions and Applications".

\
	
\noindent{\bf Data Availability.} Data sharing is not applicable to this article as no datasets were generated or analysed
	during the current study.

\end{document}